%
%
%


\documentclass{amsart}

\usepackage[utf8]{inputenc}
\usepackage{microtype}
\usepackage{xspace}
\usepackage{undertilde}
\usepackage{amssymb}
\usepackage[hyperindex,breaklinks,colorlinks=true,linkcolor=black,anchorcolor=black,citecolor=black,filecolor=black,menucolor=black,runcolor=black,urlcolor=blue,pagebackref]{hyperref}



\newtheorem{theorem}{Theorem}[section]
\newtheorem{lemma}[theorem]{Lemma}

\newtheorem{corollary}[theorem]{Corollary}
\newtheorem{claim}[theorem]{Claim}

\theoremstyle{definition}
\newtheorem{definition}[theorem]{Definition}

\theoremstyle{remark}
\newtheorem{remark}[theorem]{Remark}

\numberwithin{equation}{section}

	\usepackage{xcolor}
	\usepackage{soul}
	
	\definecolor{lightblue}{rgb}{.60,.60,1}
	
	\definecolor{ItalianApricot}{rgb}{1,0.7,0.5}


\def\qt#1{``#1''}

\def\eps{\ensuremath{\varepsilon}\xspace}
\def\sC{\ensuremath{\+C}\xspace}

\newcommand{\MLR}{\mathsf{MLR}}

\newcommand{\CR}{\mathsf{CR}}
\newcommand{\DemR}{\mathsf{DemR}}
\newcommand{\DiffR}{\mathsf{DiffR}}
\newcommand{\WR}{\mathsf{W2R}}

\newcommand{\dom}{\text{dom}}
\newcommand{\uh}{{\upharpoonright}}
\newcommand{\uple}[1]{{\langle #1 \rangle}}
\newcommand{\cs}{2^\omega}

\newcommand{\C}{\+C}

\newcommand{\G}{\mathcal{G}}

\newcommand{\N}{\mathbb{N}}

\newcommand{\PP}{\mathbb{P}}
\newcommand{\Q}{\mathbb{Q}}
\newcommand{\R}{\mathbb{R}}

\newcommand{\Cyl}[1]{\ensuremath{[\![{#1}]\!]}}

\newcommand{\DII}{\Delta^0_2}
\newcommand{\NN}{{\mathbb{N}}}
\newcommand{\RR}{{\mathbb{R}}}
\newcommand{\QQ}{{\mathbb{Q}}}

\newcommand{\sub}{\subseteq}
\newcommand{\sN}[1]{_{#1\in \NN}}
\newcommand{\uhr}[1]{\! \upharpoonright_{#1}}
\newcommand{\ML}{Martin-L{\"o}f}
\newcommand{\SI}[1]{\Sigma^0_{#1}}
\newcommand{\PI}[1]{\Pi^0_{#1}}

\newcommand{\bi}{\begin{itemize}}
\newcommand{\ei}{\end{itemize}}
\newcommand{\bc}{\begin{center}}
\newcommand{\ec}{\end{center}}

\newcommand{\Halt}{{\ES'}}
\newcommand{\ES}{\emptyset}
\newcommand{\estring}{\emptyset}
\newcommand{\ria}{\rightarrow}
\newcommand{\tp}[1]{2^{#1}}
\newcommand{\ex}{\exists}
\newcommand{\fa}{\forall}

\newcommand{\Kuc}{Ku{\v c}era}

\newcommand{\strcantor}{2^{ < \omega}}

\newcommand{\n}{\noindent}

\newcommand{\vsps}{\vspace{3pt}}

\newcommand{\vsp}{\vspace{6pt}}
\newcommand{\leb}{\mathbf{\lambda}}
\newcommand{\lwtt}{\le_{\mathrm{wtt}}}

\newcommand{\sss}{\sigma}

\newcommand{\lland}{\, \land \, }

\newcommand\+[1]{\mathcal{#1}}

\newcommand{\ol}{\overline}
\renewcommand{\ul}{\underline}

\newcommand{\lra}{\leftrightarrow}
\newcommand{\LR}{\Leftrightarrow}
\newcommand{\RA}{\Rightarrow}

\newcommand{\sssl}{\ensuremath{|\sigma|}}

\newcommand{\prefx}{\preceq}
\newcommand{\extend}{\succeq}

\begin{document}

\title{Denjoy, Demuth, and Density}


\author{Laurent Bienvenu}
\address{LIAFA, CNRS \& Universit\'e de Paris 7, Case 7014, 75205 Paris Cedex 13, France}
\email{laurent.bienvenu@liafa.jussieu.fr}

\author{Rupert H\"olzl}
\address{Institut für Theoretische Informatik, Mathematik und
Operations Research, Fakultät für Informatik, Universität der Bundeswehr München, Werner-Heisenberg-Weg 39, 85577~Neubiberg, Germany}
\email{r@hoelzl.fr}

\author{Joseph S. Miller}
\address{Department of Mathematics, University of Wisconsin,
Madison, WI 53706-1388, USA}
\email{jmiller@math.wisc.edu}

\author{Andr\'e Nies}
\address{Department of Computer Science, University of Auckland, Private
Bag 92019, Auckland, New Zealand}
\email{andre@cs.auckland.ac.nz}

\thanks{The second author is supported by a Feodor Lynen postdoctoral research fellowship by the Alexander von Humboldt Foundation. The third author is supported by the National Science Foundation under grant DMS-1001847. The fourth author is supported by the Marsden fund of New Zealand.}

\thanks{This article is a significantly extended version of an article published in the Proceedings of the 29th Symposium on Theoretical Aspects of Computer Science (STACS)  2012 by the same authors (volume 14, pages 543--554, 2012).  In the present article we   treat density and   differentiability as topics of equal importance. The previous article was mainly on differentiability, using density as a tool.  The present article also  expands on the STACS article    by adding omitted proofs, simplifying proofs, and expanding proof sketches into full proofs.}

\makeatletter
\@namedef{subjclassname@2010}{\textup{2010} Mathematics Subject Classification}
\makeatother
\subjclass[2010]{Primary 03D78; Secondary 03D32}

\keywords{Differentiability, Denjoy alternative, density, porosity, randomness}

\date{}

\begin{abstract}
We consider effective versions of two classical theorems, the Le\-besgue density theorem and the Denjoy-Young-Saks theorem. For the first, we show that  a \ML\ random real $z\in [0,1]$ is Turing incomplete if and only if every effectively closed class $\sC \sub [0,1]$ containing~$z$
 has positive density at~$z$. Under the stronger assumption that $z$ is not  LR-hard, we show that $z$  has density-one in every  such class. These results have since been applied to solve two open problems on the interaction between the Turing degrees of \ML\ random reals and $K$-trivial sets: the non-cupping and covering problems.

We say that $f\colon[0,1]\to\R$ satisfies the Denjoy alternative at $z \in [0,1]$ if either the derivative $f'(z)$ exists, or the upper and lower derivatives at $z$ are $+\infty$ and $-\infty$, respectively. The Denjoy-Young-Saks theorem states that every function $f\colon[0,1]\to\R$ satisfies the Denjoy alternative at almost every $z\in[0,1]$. We answer a question posed by \Kuc\ in 2004 by showing that a real $z$ is computably random if and only if every computable function $f$ satisfies the Denjoy alternative at $z$. 

For Markov computable functions, which are only defined on computable reals, we can formulate the Denjoy alternative using pseudo-derivatives. Call a real~$z$ \emph{DA-random} if every Markov computable function satisfies the Denjoy alternative at~$z$. We considerably strengthen a result of Demuth (Comment.\ Math.\ Univ.\ Carolin., 24(3):391--406, 1983) by showing that every Turing incomplete \ML\ random real is DA-random. The proof involves the notion of non-porosity, a variant of density, which is the bridge between the two themes of this paper. We finish by showing that DA-randomness is incomparable with \ML\ randomness.
\end{abstract}

\maketitle

\section{Introduction}

Several important theorems from analysis and ergodic theory assert a certain property for all points outside a null set. Computable analysis and algorithmic randomness both play a role in understanding the effective content of such theorems. This is a very   active area of current  research 
\cite{Brattka.Miller.ea:nd,Freer.Kjos.ea:nd,Pathak.Rojas.ea:12}. The idea is to put an effectiveness hypothesis on the given objects in the theorem and look for a randomness notion that guarantees that a point $z$ satisfies the conclusion of the theorem. Every randomness notion essentially identifies a co-null set of ``random'' points, so it is natural to expect that a sufficiently strong randomness notion avoids exceptions to the (countably many) effective cases of the analytical theorem. Frequently, we can even characterize randomness classes using effective forms of well-known theorems from analysis. We illustrate with three examples.

\begin{enumerate}
\item   A well-known theorem of  Lebesgue  states that every  nondecreasing  function $f\colon[0,1]\to\R$  is almost everywhere differentiable (see, e.g., \cite[Thm.\ 20.1]{Carothers:00}).  

\item Any function of bounded variation is the difference of two non-decreasing functions (Jordan), so  it is   almost everywhere differentiable by (1).

%

\item The  Lebesgue differentiation theorem states that if $g\in L^1([0,1]^d)$, then for almost every $z\in[0,1]^d$, \bc $g(z) = \lim_{Q  \to z}  \frac{1}{\lambda(Q)}\int_{Q} g$, \ec  where $Q$ is an open cube containing $z$ with volume  $\lambda(Q)$ tending to $0$.

\end{enumerate}

\noindent Each of these theorems has been effectivized to characterize a standard randomness class. The most successful definition of randomness for infinite binary sequences was given by Martin-L\"of~\cite{MartinLof1966}, but several other definitions have appeared (see Section~\ref{ss:rdness notions}). Schnorr \cite{Sc:71a,Sc:71} introduced two weaker notions, now called \emph{computable randomness} and \emph{Schnorr randomness}. \ML\ randomness properly implies computable randomness, which in turn properly implies Schnorr randomness. Although these definitions were given for infinite binary sequences, they can be applied to real numbers via the binary expansion.  The fact that they  each have analytic characterizations gives new evidence that they are natural notions.

\begin{enumerate}
\renewcommand{\labelenumi}{(\alph{enumi})}

\item Brattka, Miller and Nies~\cite{Brattka.Miller.ea:nd} showed that $z$ is computably random iff every computable non-decreasing function is differentiable at~$z$. This result was surprising because previously, computable randomness had no characterizations other than its original definition, and in particular, no analytical characterization.

\item Demuth \cite{De:75} showed that $z$ is \ML\ random iff every computable function with bounded variation is differentiable at~$z$. See \cite{Brattka.Miller.ea:nd} for a proof.

\item Pathak, Rojas, and Simpson \cite{Pathak.Rojas.ea:12} and Rute \cite{Rute:12} proved that $z\in[0,1]^d$ is Schnorr random iff for every $L_1$-computable function $g \colon \, [0,1]^d \to \RR$,  the     $\lim_{Q  \to z}  \frac{1}{\lambda(Q)}\int_{Q} g$   exists. This is an effective version of the   Lebesgue differentiation theorem, but takes into account only the existence of limits.

\end{enumerate}

Other results in this line of research include the characterization of  both Schnorr randomness \cite{GaHoRo:11} and \ML\ randomness \cite{BienvenuDHMS2011, FranklinGMN2011} using effective versions of Birkhoff's ergodic theorem.

We study two theorems from analysis of the ``almost everywhere'' kind, the \emph{Lebesgue density theorem} (see Sections~\ref{subsec:intro-density} and~\ref{sec:density}) and the striking \emph{Denjoy-Young-Saks theorem}, which says that \emph{every} function $f\colon[0,1]\to\R$ satisfies the Denjoy alternative at almost every $z\in[0,1]$ (see Sections~\ref{subsec:intro-denjoy} and~\ref{sec:denjoy}). We characterize the computably random reals as those for which the Denjoy-Young-Saks theorem holds for computable functions (Theorem~\ref{thm:DenjoyCR}). Otherwise, we consider effectivizations for which even \ML\ randomness is not sufficient to make the theorem hold at a real, setting our results apart from earlier work.

\emph{Difference randomness} of a real, which is slightly stronger than \ML\ randomness, will play an important role. It was introduced by Franklin and Ng \cite{FranklinNg}, who showed that a real is difference random if and only if it is \ML\ random and Turing incomplete. We characterize difference randomness  of a real in terms of its density in   effectively closed classes containing the real (Theorem~\ref{thm:density_Turing}) and show that difference randomness is sufficient to make the Denjoy-Young-Saks theorem hold for all Markov computable functions (Section~\ref{subsec:diff-Denjoy}). The relationship between density and difference randomness has unexpected applications  to the study of $K$-triviality and its interaction with randomness; we give detail in Section~\ref{subsec:discussion}.

\subsection{Lebesgue Density}
\label{subsec:intro-density}

We will  first discuss the Lebesgue density theorem \cite[page 407]{Lebesgue:1910}. Let~$\lambda$ denote Lebesgue measure.
\begin{definition}
We define the (lower Lebesgue) density of a set $\sC \subseteq \R$ at a point~$z$ to be the quantity $$\varrho(\sC | z):=\liminf_{\gamma,\delta \rightarrow 0^+} \frac{\lambda([z-\gamma,z+\delta] \cap \sC)}{\gamma + \delta}.$$
\end{definition}

Intuitively, this measures the  fraction of space  filled by~$\sC$ around~$z$ if we ``zoom in'' arbitrarily close. Note that  $0 \le \varrho(\sC | z) \le 1$.

\begin{theorem}[Lebesgue density theorem]
Let $\sC \subseteq \R$ be a measurable set. Then  $\varrho(\sC | z)=1$ for almost every $z\in \sC$.
\end{theorem}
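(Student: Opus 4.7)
The plan is to reduce the theorem to showing, for each rational $\alpha \in (0,1)$, that the set $E_\alpha := \{z \in \sC : \varrho(\sC|z) < \alpha\}$ has Lebesgue measure zero. Taking the union over $\alpha$ rational in $(0,1)$ then yields the conclusion. By writing $\sC$ as a countable union of bounded measurable pieces, we may assume $\sC$ is bounded, and in particular that $\lambda(E_\alpha) < \infty$.

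Fix $\alpha$ and suppose for contradiction that $\lambda(E_\alpha) > 0$. By outer regularity of Lebesgue measure, choose an open set $U \supseteq E_\alpha$ with $\lambda(U) < \lambda(E_\alpha)/\alpha$; this is possible precisely because $\alpha < 1$. For each $z \in E_\alpha$, the definition $\varrho(\sC|z) < \alpha$ gives arbitrarily small pairs $\gamma, \delta > 0$ with
\[
\lambda([z-\gamma, z+\delta] \cap \sC) < \alpha (\gamma + \delta).
\]
Since the intervals may be taken arbitrarily short and $U$ is an open neighborhood of $z$, we may further insist $[z-\gamma, z+\delta] \subseteq U$. The resulting collection of closed intervals, each containing its center-like point $z \in E_\alpha$ and of arbitrarily small diameter, forms a Vitali cover of $E_\alpha$.

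The key tool is now the Vitali covering lemma, which yields a countable pairwise disjoint subfamily $\{I_n\}_{n \in \NN}$ that still covers $E_\alpha$ up to a null set. Combining disjointness with $I_n \subseteq U$, $E_\alpha \subseteq \sC$, and the defining inequality of each $I_n$, we compute
\[
\lambda(E_\alpha) \le \sum_n \lambda(E_\alpha \cap I_n) \le \sum_n \lambda(\sC \cap I_n) < \alpha \sum_n \lambda(I_n) \le \alpha \lambda(U) < \lambda(E_\alpha),
\]
a contradiction. Hence $\lambda(E_\alpha) = 0$, as required.

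The main obstacle is the Vitali step: one needs the asymmetric intervals $[z-\gamma, z+\delta]$ appearing in the definition of $\varrho$ to genuinely generate a Vitali cover in the form accepted by the lemma. This is fine because $z$ lies inside every such interval and the diameters $\gamma + \delta$ can be made arbitrarily small; no further geometric regularity (such as a bounded eccentricity condition) is needed in dimension one. Everything else in the argument is a standard application of outer regularity and $\sigma$-additivity, so once the Vitali cover is in place the proof is essentially mechanical.
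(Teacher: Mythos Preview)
Your argument is correct: the reduction to the sets $E_\alpha$, the outer-regularity step, the recognition that the asymmetric intervals $[z-\gamma,z+\delta]$ form a Vitali cover of $E_\alpha$, and the final chain of inequalities are all sound. One small point you might tighten is the measurability of $E_\alpha$ (or else phrase the argument with outer measure throughout); this is harmless since $z\mapsto\varrho(\sC\mid z)$ is a $\liminf$ of measurable functions, but it is worth a sentence.

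As for comparison: the paper does not give its own proof of the Lebesgue density theorem. It is quoted as classical background, with only the remark that it follows from the Lebesgue differentiation theorem by taking $g=1_{\sC}$. Your proof is therefore not competing with anything in the paper; it is the standard direct Vitali-covering argument and is more self-contained than the paper's one-line deduction, though of course the differentiation theorem itself is usually proved by essentially the same covering technology.
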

The Lebesgue differentiation theorem implies the Lebesgue density theorem: let $g$ be the characteristic function~$1_\sC$ and note that the limit of averages around $z$ equals the density of $z$ in $\sC$. 

If $\sC$ is open, the theorem is trivial, and in fact holds for all $z\in \sC$. As pointed out by T.\ Tao in his blog \cite{MR2459552}, the Lebesgue density theorem implies that general measurable sets behave similarly to open sets. The simplest non-trivial case of the theorem is for closed sets. We will consider the effective case when $\sC$ is an \emph{effectively closed} class, in other words, the complement of $\sC$ is is the  union of  an effective   sequence  of open intervals with rational endpoints.

\begin{definition}
Consider $z\in[0,1]$.
\begin{itemize}
\item We say that $z$ is a \emph{density-one point}   if $\varrho(\sC | z)=1$ for every effectively closed class $\sC$ containing $z$.  

\item We say that $z$ is a \emph{positive density point} if $\varrho(\sC | z)>0$ for every effectively closed class $\sC$ containing $z$.
\end{itemize}
\end{definition}
By the Lebesgue density theorem and the fact that there are only countably many effectively closed classes, almost every $z$ is a density-one point. 

A third notion closely related to density, \emph{non-porosity}, will be crucial in the proofs of Theorems~\ref{thm:2} and~\ref{thm:1}. The definition originates in the work of Denjoy. See for instance \cite[Ex.\ 7:9.12]{bruckner2thomson}, or \cite[5.8.124]{Bogachev.vol1:07} (but note the typo in the definition there). 

\begin{definition} \label{def:porous at} We say that a set $\sC\sub\mathbb R$ is \emph{porous at} a real $z$ via $\eps>0$ if for each $\alpha>0$, there exists $\beta$ with $ 0<\beta\le \alpha$ such that $(z-\beta, z+ \beta)$ contains an open interval of length $\varepsilon\beta $ that is disjoint from $\sC$. We say that $\sC$ is \emph{porous at} $z$ if it is porous  at $z$ via some~$\varepsilon>0$. \end{definition}

\begin{definition}
We call $z$ a \emph{non-porosity point} if every effectively closed class to which it belongs is non-porous at $z$.
\end{definition}

Clearly, porosity of $\+C$ at $z$ implies that $\varrho(\sC | z)<1$. Therefore, almost every $z$ is a non-porosity point.

In contrast to examples (a)--(c) above, we cannot characterize density-one points in terms of an algorithmic randomness notion. The reason is that every $1$-generic real $z$ is a density-one point: an effectively closed class $\sC$ contains a $1$-generic $z$ only if $\sC$ contains an open interval around $z$. But no $1$-generic is Martin-L\"of random, or even Schnorr random. Indeed, $1$-generics violate basic properties we expect of random sequences, such as the law of large numbers. (For more information, see either \cite{DowneyH2010} or \cite{Nies2009}.) To give us some hope of understanding density from the standpoint of algorithmic randomness, we restrict our attention to reals $z$ that are Martin-L\"of random.

We will not fully characterize the Martin-L\"of random reals that are density-one points. Instead, we give several partial results, including a characterization of the Martin-L\"of random positive density points. The following diagram summarizes our results pertaining to density. Assuming that $z$ is Martin-L\"of random:

\begin{center}
\newcommand{\specialcell}[2][c]{%
  \begin{tabular}[#1]{@{}c@{}}#2\end{tabular}}

\medskip
\begin{tabular}{rcccccl}
\specialcell{$z$ is not\\LR-hard} &
	\hspace*{-0.1cm}$\xrightarrow{\text{Thm.~\ref{thm:aed}}}$\hspace*{-0.1cm} &
\specialcell{$z$ is a density-\\one point} &
	\hspace*{-0.2cm}$\xrightarrow{\hspace*{0.5cm}}$\hspace*{-0.2cm} &
\specialcell{$z$ is a positive\\density point} \\
	& & & & \rule{0cm}{16px}{\scriptsize Thm.~\ref{thm:density_Turing}}$\Big\updownarrow$\hspace*{1cm} \\
	& & & & \specialcell{$z\ngeq_T\emptyset'$} &
	\hspace*{-0.6cm}$\xrightarrow{\text{Lem.~\ref{lem:porous}}}$\hspace*{-0.1cm} &
\specialcell{$z$ is a non-\\porosity point.}
\end{tabular}
\end{center}
\medskip

{LR-hardness} is a slight weakening of Turing completeness discussed in Section~\ref{subsec:density}. By \cite[6.3.13]{Nies2009}, there is a $\DII$ ML-random real that is LR-hard but not Turing complete. Very recently, Day and Miller \cite{Day.Miller:nd} have shown that there is a Martin-L\"of random positive density point that is not a density-one point. The converses of the leftmost and rightmost implications are open.

\subsection{Discussion}
\label{subsec:discussion}

We have given examples of the application of algorithmic randomness to effective forms of ``almost everywhere'' theorems in analysis. This line of research   should also  help to   understand randomness classes better, and should lead to new results in algorithmic randomness. Until recently, the best example of this was   the base invariance of computable randomness \cite{Brattka.Miller.ea:nd}, which follows from the analytic characterization of computable randomness. But the work on density has led to some exciting developments and has helped answer long-standing questions about the connections  between randomness and computability.
We now discuss three very recent applications of this work on density. The first  two  involve \emph{$K$-triviality}. There are many characterizations of $K$-triviality; the one we will use is that a $K$-trivial set $A$ is \emph{low for random}, meaning that every Martin-L\"of random real is Martin-L\"of random relative to $A$.\footnote{See either \cite{DowneyH2010} or \cite{Nies2009} for a thorough introduction to $K$-triviality.} 
  
\medskip
\noindent (1)
In Theorem~\ref{thm:density_Turing}, we will show that if $z$ is \ML\ random, then $z$ is a positive density point iff $z\ngeq_T\emptyset'$. This result was recently used by Day and Miller~\cite{DM} to solve the \emph{non-cupping} problem: an open question of G\'acs about the $K$-trivial sets (see \cite[Question 4.8]{MN06}). They showed that a set $A$ is $K$-trivial $\LR$ $A \oplus z \ge_T \Halt$ implies $z\ge_T \Halt$ for every ML-random real $z$. The characterization of incomplete ML-random reals was essential in the proof of the ``$\RA$'' direction.

\medskip
\noindent (2)
The \emph{covering} question originated in  \cite{Hirschfeldt.Nies.ea:07}: is every $K$-trivial set $A$   Turing below an incomplete ML-random real $z$?  (Also see \cite[Question 4.6]{MN06}.) Using density and related notions,  Bienvenu, Greenberg, \Kuc, Nies and Turetsky~\cite{Bienvenu.Greenberg.ea:OWpreprint}   built a $K$-trivial set $A$ such that any such $z$ is necessarily LR-hard. They also showed that if a Martin-L\"of random real $z$ is not a density-one point, then $z$ computes every $K$-trivial. (See Theorem~\ref{thm:ML-rd_nondensity} for an alternative proof.) As mentioned above, Day and Miller \cite{Day.Miller:nd} constructed a Turing incomplete ML-random that is not a density-one point. Together, these results establish the existence of a single incomplete ML-random computing all $K$-trivials. In particular, this settles the  covering question \cite[Question 4.6]{MN06} in the affirmative.


\medskip
\noindent (3)
The third application involves DNC functions. We say that $g\colon\N\to\N$ is \emph{diagonally non-computable} (\emph{DNC}) if $(\forall n)\; g(n)\neq\varphi_n(n)$, where $(\varphi_e)_{e\in\N}$ is an effective numbering of the partial computable functions. Ku{\v c}era proved that every \ML\ random computes a DNC function. The question then arises  how slowly   such a function can grow. It is not hard to show that if $f\colon\N\to\N$ is computable and $\sum_{n\in\N} \frac{1}{f(n)+1}<\infty$, then every \ML\ random computes an $f$-dominated DNC function. Under a reasonable assumption on the numbering of partial computable functions, Miller \cite{Mi:} proved that if $f\colon\N\to\N$ is a non-decreasing computable function for which $\sum_{n\in\N} \frac{1}{f(n)+1}=\infty$, then a \ML\ random real $z$ computes an $f$-dominated DNC function iff $z\geq_T\emptyset'$. The proof uses Theorem~\ref{thm:density_Turing}, and in fact, is a modification of our original proof of that theorem (see \cite[Theorem 20]{BHNM_diff}).

\subsection{The Denjoy alternative}
\label{subsec:intro-denjoy}

We say that a function  $f\colon [0,1] \to \RR$ satisfies the \emph{Denjoy alternative} at a real $z$ if either $f'(z)$  exists, or $\ol D f(z)= \infty$ and $\ul Df(z) = -\infty$. Here $\ol D f$ and $\ul Df$ are the upper and lower derivatives, respectively (see page \pageref{def_upper_lower_deriv}). The following result is due to Denjoy in its original form for continuous functions, and was successively improved by Young and Saks to the general case.

\begin{theorem}[Denjoy-Young-Saks theorem\footnotemark]
Let $f\colon [0,1] \to \RR$ be an arbitrary function. Then $f$ satisfies the Denjoy alternative at almost every $z\in [0,1]$.
\end{theorem}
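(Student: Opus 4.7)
The plan is to show that the set of points where the Denjoy alternative fails has Lebesgue measure zero. I work with the four Dini derivatives $D^+f, D_+f, D^-f, D_-f$ (right-upper, right-lower, left-upper, left-lower), so that $\overline{D}f(z)=\max(D^+f(z),D^-f(z))$ and $\underline{D}f(z)=\min(D_+f(z),D_-f(z))$. The Denjoy alternative fails at $z$ exactly when $f'(z)$ does not exist \emph{and} either $\overline{D}f(z)<+\infty$ \emph{or} $\underline{D}f(z)>-\infty$. So it suffices to prove that for almost every $z$, finiteness of $\overline{D}f(z)$ forces all four Dini derivatives to coincide, and symmetrically for $\underline{D}f(z)$.

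The main technical step is a Vitali-type covering lemma. For any rationals $\alpha>\beta$, the set
\[
A_{\alpha,\beta}=\{z:D^+f(z)>\alpha,\ D_-f(z)<\beta,\ D^+f(z)<+\infty\}
\]
has outer Lebesgue measure zero, together with its three analogues obtained by swapping left and right, and by replacing $f$ with $-f$. Taking a countable union over rationals, one concludes that almost everywhere: if $D^+f(z)<+\infty$ then $D^+f(z)\le D_-f(z)$; if $D^-f(z)<+\infty$ then $D^-f(z)\le D_+f(z)$; and the two corresponding lower bounds obtained from applying the argument to $-f$ (namely, if $D_+f(z)>-\infty$ then $D_+f(z)\ge D^-f(z)$, and similarly on the left).

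A short case analysis then finishes the proof. If $\overline{D}f(z)<+\infty$, then both $D^+f(z)$ and $D^-f(z)$ are finite, and the chain of a.e.\ inequalities collapses to $D^+f(z)=D_-f(z)=D^-f(z)=D_+f(z)$, so $f'(z)$ exists; a symmetric argument handles $\underline{D}f(z)>-\infty$. In every other case $\overline{D}f(z)=+\infty$ and $\underline{D}f(z)=-\infty$, which is the second branch of the alternative. The hard step, which I expect to be the main obstacle, is the covering lemma. For each $z\in A_{\alpha,\beta}$ one uses the hypothesis $D^+f(z)<M$ for some $M$ to select intervals on which $f$ rises at slope $>\alpha$ but remains globally controlled, and pairs them with left-sided intervals witnessing $D_-f(z)<\beta$; a Vitali extraction followed by a careful slope-balancing summation contradicts a positive outer measure assumption on $A_{\alpha,\beta}$. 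The finiteness restriction $D^+f(z)<+\infty$ is essential to keep the packing argument quantitative. This is the classical Denjoy--Young--Saks argument and is the technical heart of the theorem.
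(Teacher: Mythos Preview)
The paper does not prove this classical theorem; it is stated in the introduction as background, with references to Bruckner and Bogachev. So there is no proof in the paper to compare against directly.

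Your outline is a recognizable sketch of one standard route: establish the four ``crossed'' almost-everywhere inequalities between opposite-side Dini derivatives by showing that each $A_{\alpha,\beta}$ (and its reflections) is null, then finish with a case analysis. The shape is correct, though the covering lemma---which you rightly identify as the heart of the matter---is not actually carried out, only described. One small point to watch: your case analysis should accommodate the possibility that all four Dini derivatives coincide at $+\infty$ (or all at $-\infty$); in the paper's Definition~\ref{def:DA} this still counts as the first branch of the alternative, since $\widetilde Df(z)=\utilde Df(z)$.

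It is worth comparing your approach with the one the paper uses when it \emph{does} prove an effective Denjoy alternative (Theorem~\ref{thm:DA_w2r}, adapting the argument in Bogachev). That proof is structurally different from yours: rather than a direct Vitali/Dini-derivative argument, one assumes without loss of generality that $\utilde Df(z)>-\infty$, restricts to a closed set $E$ on which nearby slopes are bounded below, passes to the nondecreasing envelope $f_*(x)=\sup_{a\le x}f(a)$, applies Lebesgue's theorem on monotone functions to a nondecreasing extension $g$ of $f_*$, and then uses non-porosity of $E$ at $z$ to transfer the value $g'(z)$ back to the pseudo-derivatives of $f$ (Claim~\ref{cl:nonporous_does_it}). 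This reduction-to-monotone strategy is precisely what effectivizes: the monotone differentiation step becomes the Brattka--Miller--Nies characterization of computable randomness, and the Lebesgue density theorem becomes non-porosity in an effectively closed class. Your direct covering approach, while a valid classical proof, does not isolate a single nondecreasing computable function whose differentiability at $z$ carries the argument, and so would be harder to push through in the effective setting the paper is built around.
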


\footnotetext{The full version (see, e.g., \cite{Bogachev.vol1:07}) of the Denjoy-Young-Saks theorem is stated using the four Dini derivatives, the upper and lower derivatives from the right and the left, and makes more case distinctions than we do here. The compact form that we have stated is possible because we are using the two-sided upper and lower derivatives.}

Note that the Denjoy-Young-Saks theorem trivially implies Lebesgue's theorem on the a.e.\ differentiability of non-decreasing functions because $\ul Df(z) \ge 0$ for any non-decreasing function $f$.

We answer a question posed by \Kuc\ at the 2004 Logic, Computability and Randomness meeting in C\'ordoba, Argentina by showing that a real $z$ is computably random iff every computable function $f\colon [0,1] \to \RR$ satisfies the Denjoy alternative at $z$. Just as in the classical Denjoy-Young-Saks theorem, in the effective setting we do not need any analytical hypothesis on the function.  This contrasts with the characterization of computable randomness in Brattka, Miller and Nies~\cite{Brattka.Miller.ea:nd}, where the functions have to be non-decreasing. (Note, however, that every computable function on the reals is continuous.)

The first  connections between algorithmic randomness notions and analysis were made by the Czech constructivist Osvald Demuth in the 1970s and 80s. See the survey \cite{Kucera.Nies:12} for background on Demuth's work. In \cite{Demuth:82a}, Demuth introduced a randomness notion that we now call \emph{Demuth randomness}.  In~\cite{Demuth:88}, he studied the Denjoy alternative for Markov computable functions. A real-valued function $f$ defined on the computable reals is called \emph{Markov computable} if, roughly, from a computable name for a real $z$, one can effectively determine a computable name for $f(z)$. (See Section~\ref{s:prelims_analysis} for more details.) We say that a real~$z$ is \emph{DA-random} if every Markov computable function satisfies the Denjoy alternative at~$z$. Since a Markov computable function $f$ may be partial, we need to express the Denjoy alternative using pseudo-derivatives that take into account only the points near $z$ where $f$ is defined (see page~\pageref{def_pseudo_deriv}). If $f$ is total and continuous, the pseudo-derivatives coincide with the usual derivatives.

Note that DA-randomness implies computable randomness by the result of Brattka, Miller and Nies~\cite{Brattka.Miller.ea:nd} mentioned in (b) above. Thus, DA-randomness can be considered a true randomness notion (in contrast to being a density-one point). Demuth~\cite{Demuth:88} proved that Demuth randomness implies DA-randomness. We show that difference randomness, which is significantly weaker than Demuth randomness, implies DA-randomness. Thus Demuth randomness is too strong a hypothesis. 
\begin{theorem}\label{thm:2}
Every difference random real is DA-random.
\end{theorem}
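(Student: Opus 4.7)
The proof goes by contraposition. Suppose $z$ is difference random and that some Markov computable $f$ fails the Denjoy alternative at~$z$. By the Franklin--Ng theorem, $z$ is Martin-L\"of random with $z\ngeq_T\emptyset'$, so by Lemma~\ref{lem:porous} (as recorded in the diagram above), $z$ is a non-porosity point: every $\Pi^0_1$ class containing $z$ is non-porous at~$z$. It therefore suffices to construct, from the failure of Denjoy at~$z$, an effectively closed class $\mathcal{C}$ with $z\in\mathcal{C}$ that is porous at~$z$.

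By symmetry, assume $\overline{D}f(z)=b<\infty$. Since $f'(z)$ does not exist, $\underline{D}f(z)=a<b$ (with $a=-\infty$ permitted). Fix rationals $a^+, b^-, M$ with $a<a^+<b^-<b<M$, put $r^*=(b^--M)/(a^+-M)\in(0,1)$, and pick rationals $r^{**}\in(r^*,1)$ and $\varepsilon\in(0,(1-r^{**})/2)$. For sufficiently large~$k$, define the c.e.\ open set
\[
U=\bigcup\Bigl\{(p,q)\subseteq[0,1]:p,q\in\mathbb{Q},\ q-p\leq 2^{-k},\ f(p),f(q)\ \text{certified defined},\ \tfrac{f(q)-f(p)}{q-p}>M\Bigr\},
\]
and set $\mathcal{C}=[0,1]\setminus U$. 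Then $\mathcal{C}$ is $\Pi^0_1$, and $\overline{D}f(z)<M$ together with a sufficiently large choice of~$k$ guarantees $z\in\mathcal{C}$: no short rational interval containing $z$ with certified endpoints can realize slope exceeding~$M$.

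Porosity of $\mathcal{C}$ at~$z$ via~$\varepsilon$ is the main step. Given $\alpha>0$, the plan is to produce nested rational pairs $(p_1,q_1)\subset(p_2,q_2)$, both containing~$z$, with $\beta:=q_2-p_2\leq\alpha$, slope of $(p_2,q_2)$ larger than~$b^-$, slope of $(p_1,q_1)$ smaller than~$a^+$, and ratio $r:=(q_1-p_1)/(q_2-p_2)$ in $[r^*,r^{**}]$. A weighted-average computation on the outer pieces $(p_2,p_1)\cup(q_1,q_2)$ then yields an average slope strictly greater than~$M$, so at least one of the rational subintervals $(p_2,p_1)$ or $(q_1,q_2)$ has slope~$>M$ and length at least $(1-r^{**})\beta/2>\varepsilon\beta$. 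Lying inside $(z-\beta,z+\beta)$ and of length $\leq 2^{-k}$ (for $k$ sufficiently large), such an interval is enumerated into~$U$, and hence witnesses a porosity hole of $\mathcal{C}$ at scale~$\beta$.

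The main obstacle is producing the nested pairs subject to the prescribed slope and scale constraints. The pseudo-derivatives $\overline{D}f(z)$ and $\underline{D}f(z)$ yield witnessing intervals at arbitrarily small scales, but not a priori at commensurate scales suitable for nesting with $r\in[r^*,r^{**}]$. This will be handled by a careful selection interlacing the scales of outer and inner witnesses, refining until the ratio falls in the required window. A further technical point is the partiality of Markov computable~$f$: the domain of $f$ is only approximated from below, so the definition of $U$ and the pseudo-derivative bounds must both be formulated over rationals at which $f$-values are $\Sigma^0_1$-certified.
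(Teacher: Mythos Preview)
Your approach attempts to build a porous $\Pi^0_1$ class directly from the failure of the Denjoy alternative, which is quite different from the paper's route. However, there are two genuine gaps.

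First, the step you yourself flag but do not carry out: producing nested rational intervals $(p_1,q_1)\subset(p_2,q_2)$ about $z$ with outer slope above $b^-$, inner slope below $a^+$, and length ratio in $[r^*,r^{**}]$. The pseudo-derivatives guarantee witnesses at arbitrarily small scales, not at \emph{every} scale, so there is no evident mechanism forcing the scales of inner and outer witnesses to be commensurate. ``Careful selection interlacing the scales'' is a description of the problem, not a solution; this is the heart of the construction and cannot be deferred. Second, even granting such a pair, the porosity step is flawed. The weighted average shows that at least one of $(p_2,p_1)$, $(q_1,q_2)$ has slope exceeding $M$, and separately that at least one has length at least $(1-r^{**})\beta/2$. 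But nothing forces these to be the \emph{same} piece: the steep piece may be arbitrarily short while the long piece has slope just below $M$, so you have not produced a hole of size $\varepsilon\beta$ inside $(z-\beta,z+\beta)$.

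The paper avoids both difficulties by a different mechanism. Assuming $\utilde D f(z)>-\infty$, it works with the natural $\Pi^0_1$ class $E=E_{n,r,s}$ on which slopes of short straddling intervals are bounded below. Using Markov computability (specifically, continuity at computable reals), it shows that $f_*(x)=\sup_{a\le x}f(a)$ is computable on $E$, then invokes the monotone extension theorem to obtain a nondecreasing computable $g$ on $[0,1]$ agreeing with $f_*$ on $E$. Since $z$ is computably random, $g'(z)$ exists by Brattka--Miller--Nies. Non-porosity of $E$ at $z$ is then used only to transfer this back to $f$: a short classical estimate gives $\widetilde D f(z)\le g'(z)\le \utilde D f(z)$, so the pseudo-derivative exists. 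Thus non-porosity enters as a comparison device between $f$ and a monotone computable majorant, not as a property to be violated by an ad hoc class.
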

A function $f$ defined on all computable reals is Banach-Mazur computable if it maps computable sequences of reals to computable sequences of reals (but where the mapping does not have to be effective). In fact, our proof yields the Denjoy alternative at difference random reals for the class of Banach-Mazur computable functions~$f$. Hertling \cite{MR2110825} proved that this class of functions is strictly larger than the class of Markov computable functions.

The even weaker effectiveness requirement, that $f(q)$ is computable uniformly in a rational $q \in [0,1]$, seems to be insufficient: our proof uses continuity at each computable real, which may fail for such functions. This contrasts with the case of Brattka, Miller and Nies~\cite[Thm.\ 7.3]{Brattka.Miller.ea:nd}, where such a requirement on a non-decreasing function suffices to establish differentiability at each computably random real. 
However, we show in Theorem~\ref{thm:DA_w2r}, which leads up to Theorem~\ref{thm:2}, that the stronger notion of weak $2$-randomness is sufficient for such functions. 

It turns out that Martin-L\"of randomness is neither sufficient nor necessary to ensure the Denjoy alternative for Markov computable functions.
\begin{theorem}\label{thm:1}
DA-randomness is  incomparable with Martin-L\"of randomness.
\end{theorem}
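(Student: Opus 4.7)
The theorem asserts an incomparability, so I would prove two separate existence statements.

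\emph{An ML-random real that is not DA-random.} By Theorem~\ref{thm:2} every difference random real is DA-random, so any witness here must be ML-random and Turing complete. I would aim to show this directly for Chaitin's $\Omega$ by constructing a single Markov computable function $f$ whose Denjoy alternative fails at $\Omega$. The construction runs along the left-c.e.\ approximation $\Omega_s \nearrow \Omega$: at each stage $s$ I reserve a small rational window around the current $\Omega_s$ and commit $f$ on the rationals so that secant slopes from computable reals below the window tend to some fixed value $a$, while slopes from above the window tend to a distinct fixed value $b$. Because $\Omega_s$ only moves upward and the windows are nested and shrinking, later commitments refine, rather than contradict, earlier ones, and in the limit the left and right pseudo-derivatives at $\Omega$ take the finite unequal values $a$ and $b$, violating the Denjoy alternative. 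The main obstacle here is verifying that $f$ really is Markov computable: from any computable name of a computable real $x$ one must uniformly produce a computable name of $f(x)$, which forces the widths and positions of the perturbation windows to be chosen with care so that they remain locatable from any computable approximation to $x$.

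\emph{A DA-random real that is not ML-random.} Fix a component $U$ of the universal Martin-L\"of test; the target $z$ will lie in $U$ and hence not be ML-random. Inside $U$ I would perform an effective measure-theoretic construction that ensures $z$ (i) escapes every computable martingale---needed because DA-randomness entails computable randomness---and (ii) satisfies the Denjoy alternative for every Markov computable function. Both are countably many requirements, each asking $z$ to avoid a null set; by a standard dovetailing with a budget of $2^{-n}\lambda(U)$ for the $n$-th requirement, the remaining closed subclass of $U$ is non-empty and any of its points can serve as $z$. The main obstacle is quantitative: I must show that for each Markov computable $f$ the set of points in $U$ at which the Denjoy alternative fails is covered by a computably enumerable family of rational intervals of arbitrarily small total measure. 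Such a bound should be obtainable by effectivizing a proof of the classical Denjoy--Young--Saks theorem restricted to Markov computable functions, exploiting the continuity of such functions at each computable real.
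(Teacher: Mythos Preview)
Your first direction is close in spirit to the paper's argument. The paper also builds a single Markov computable function failing the Denjoy alternative at a specific left-c.e.\ Martin-L\"of random real (the leftmost point $\alpha$ of the complement of a universal test component $U_1$, which plays the role of your $\Omega$). The paper's $f$ is zero outside $U_1$ and a triangular bump on certain intervals enumerated into $U_1$, arranged so that $\widetilde D f(\alpha)=0$ while $\utilde D f(\alpha)=-\infty$. Your variant with two distinct finite one-sided slopes would also violate the alternative, though you would need to be careful that the ``windows'' really cover every computable real and that the commitments are consistent; the paper's approach sidesteps this by making $f$ piecewise linear on the enumerated intervals and zero elsewhere, so Markov computability is immediate once a computable real is located inside one or two such intervals.

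Your second direction, however, has a genuine gap that is not merely a matter of missing detail. You propose to show that, for each Markov computable $f$, the set of points where the Denjoy alternative fails is covered by a c.e.\ family of intervals of arbitrarily small total measure---in other words, that it sits inside a Martin-L\"of test. But if that were true, every Martin-L\"of random real would be DA-random, directly contradicting the first direction you just argued. So the effectivization you hope for cannot exist at the $\Sigma^0_1$ level; the analysis in the paper (Theorem~\ref{thm:DA_w2r}) only gets the failure set down to a $\Pi^0_2$ null class, which is why weak $2$-randomness, not ML-randomness, is the natural upper bound from a straight effectivization of Denjoy--Young--Saks.

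The paper therefore takes an entirely different route for this direction. Rather than covering the DA-failure sets, it proves a structural implication (Theorem~\ref{prop:cr-nonpor-da}): any real that is both computably random and a non-porosity point is already DA-random. This reduces the task to producing a computably random, density-one (hence non-porosity) point that is not Martin-L\"of random, which is done by a forcing argument with conditions of the form $\langle \sigma, M, q\rangle$ encoding a computable martingale $M$ bounded by $q$ along $\sigma$ (Theorem~\ref{forcing_thm_proper}). The density-one requirement is met by a measure-theoretic analysis inside each condition, and non-ML-randomness is forced by extending along computable paths with low Kolmogorov complexity. You would need to replace your budget argument with something along these lines.
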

This is the first time that a reasonably natural randomness notion is incomparable with \ML's. These results will be proven in Section~\ref{sec:denjoy}.

\section{Preliminaries: notation and randomness notions}

\subsection{Basic notation}
\label{ss:basic}
The set of finite binary sequences (we also say strings) is denoted by $\strcantor$, and the set of infinite binary sequences, called Cantor space, is denoted by $\cs$. We write $|\sigma|$ for the length of a string $\sigma$. If $\sigma$ is a string and~$z$ is either a string or an infinite binary sequence, we say that $\sigma$ is a prefix of~$z$, which we write $\sigma \prefx z$, if the first $|\sigma|$ bits of~$z$ are exactly the string~$\sigma$. Given a binary sequence, finite or infinite, with length at least~$n$, $z \uh n$ denotes the string made of the first~$n$ bits of~$z$.

Cantor space is classically endowed with the product topology over the discrete topology on $\{0,1\}$. A basis of this topology is the set of cylinders: given a string $\sigma \in \strcantor$, the cylinder $\Cyl{\sigma}$ is the set of elements of $\cs$ having $\sigma$ as a prefix. If $A$ is a set of strings, $\Cyl{A}$ is the union of the cylinders $\Cyl{\sigma}$ with $\sigma \in A$. 
The Lebesgue measure~$\lambda$ (or uniform measure) on the Cantor space is the probability measure assigning to each bit the value $0$ with probability $1/2$ and the value $1$ with probability $1/2$, independently of all other bits. Equivalently it is the measure~$\lambda$ such that $\lambda(\Cyl \sigma)=2^{-|\sigma|}$ for all~$\sigma$. We abbreviate $\lambda(\Cyl \sigma)$ by $\lambda(\sigma)$. Given two subsets $X$ and $Y$, the second one being of positive measure, the conditional measure $\lambda_Y(X)$ of $X$ knowing~$Y$ is the quantity $\lambda(X\cap Y)/\lambda(Y)$. As before, if $X$ or $Y$ is a cylinder $\Cyl{\sigma}$, we will simply write it as $\sigma$.

Most of the paper will focus on functions from $[0,1]$ to $\R$. The set $[0,1]$ is typically identified with $\cs$, where a real~$z \in [0,1]$ is identified with its binary expansion. This expansion is unique, except for dyadic rationals (i.e., rationals of the form $a 2^{-b}$ with $a,b$ positive integers) which have two. A cylinder $\Cyl \sigma$ will commonly be identified with the open interval $(0.\sigma,0.\sigma+2^{-n})$, where $0.\sigma$ is the dyadic rational whose binary extension is $0.\sigma000\dots$.

An open set in $\cs$ or $[0,1]$ is a union of cylinders. If it is a union of a \emph{computably enumerable} (c.e.) family of cylinders, it is said to be \emph{effectively open} (or \emph{c.e.\ open}). A set is called \emph{effectively closed} if its complement is effectively open.

\subsection{Randomness notions}
\label{ss:rdness notions}

We review the algorithmic randomness notions needed below. For background on most of them see \cite[Chapter 3]{Nies2009} or \cite{DowneyH2010}. We give the definitions in Cantor space, which is standard, but we primarily apply these definitions to real numbers. There are two equivalent ways to translate between these spaces. We can either transfer the definitions, which is easily done in most cases,\footnote{One exception to this rule is computable randomness, which is defined via computable betting strategies processing the bit sequence.} or we can say that $z\in\R$ is in a certain randomness class if (the tail of) its binary expansion is in that class. Dyadic rationals are never random, of course.

Recall that a sequence of open classes $(U_n)\sN n $ is said to be \emph{uniformly c.e.}\ if there is a sequence $(B_n)\sN n$ of uniformly c.e.\ sets of strings such that each~$U_n=\Cyl{B_n}$. In Cantor space we may assume that each $B_n$ is an antichain under the prefix relation of strings. A \emph{Martin-L\"of test} is a uniformly c.e.\ sequence $(U_n)\sN n$ of open classes such that for all~$n$, $\lambda(U_n) \le  2^{-n}$. A sequence $z \in \cs$ is called \emph{Martin-L\"of random} if   for any Martin-L\"of test $(U_n)_n$ we have $z \not\in \bigcap_n U_n$. Note that we can replace $2^{-n}$ in the definition of Martin-L\"of test with any positive computable function that limits to zero without changing the randomness notion.

A \emph{Solovay test} is a uniformly c.e.\ sequence $(S_n)\sN n$ of open classes such that $\sum_n \leb(S_n)<\infty$. A Solovay tests \emph{captures} $z\in\cs$ if $z$ is in infinitely many of the $S_n$. It can be shown that $z$ is Martin-L\"of random if and only if there is no Solovay test capturing it (see for instance \cite{DowneyH2010}). Note that every Martin-L\"of test $(U_n)\sN n$ is also a Solovay test, so a \ML\ random $z$ can belong to at most finitely many $U_n$.

A generalized ML-test is a uniformly c.e.\ sequence $(U_n)\sN n$ of open classes with the weaker condition that $\lim_{n} \lambda(U_n) = 0$. A sequence $z \in \cs$ is \emph{weakly $2$-random} if there is no generalized ML-test capturing it. This is the same as saying that $z$ is not contained in any null $\Pi^0_2$ class. It is clear that weak $2$-randomness is at least as strong as Martin-L\"of randomness; it is not hard to prove that it is strictly stronger.

An important randomness notion for this paper is difference randomness, which lies strictly between Martin-L\"of and weak $2$-randomness. The following is not identical,  but equivalent to the original definition by Franklin and Ng~\cite{FranklinNg}.
\begin{definition}\label{def:diff_test}
A \emph{difference test} $((U_n)\sN n,\C)$ consists of a uniformly c.e.\ sequence $(U_n)\sN n$ of open classes and a single effectively closed class $\C$ such that for all~$n$, $\lambda(U_n\cap\C) \le  2^{-n}$. A sequence $z \in \cs$ is called \emph{difference random} if there is no difference test \emph{capturing} it, i.e., if for any difference test $((U_n)\sN n,\C)$ we have $z\not\in \bigcap_{n\in \NN}  (U_n \cap \C)$.
\end{definition}

Franklin and Ng~\cite{FranklinNg} proved that a  real is difference random if and only if it is  Martin-L\"of random and Turing incomplete (i.e., does not compute $\emptyset'$).

Another strengthening of Martin-L\"of randomness is Demuth randomness. A~function $f\colon\N \rightarrow \N$ is called $\omega$-c.e.\ if $f \lwtt \Halt$. A \emph{Demuth test} is a sequence $(U_n)$ of effectively open classes with $\lambda(U_n) \le 2^{-n}$ for all $n$, but it is not necessarily uniformly c.e. Instead it satisfies the following weak form of uniformity: there exists an $\omega$-c.e.\ function $f\colon\N \rightarrow \N$ that for each~$n$ gives a c.e.\ index for a set of strings generating~$U_n$.

\begin{definition}
A sequence~$z \in \cs$ is said to be \emph{Demuth random} if for every Demuth test $(U_n)\sN n$, $z$ belongs to only finitely many $U_n$.
\end{definition}

The last notion of randomness we will discuss in the paper is computable randomness. Its definition involves the notion of martingale.

\begin{definition}
A martingale is a function~$d\colon \strcantor \rightarrow [0,\infty)$ such that for all~$\sigma \in \strcantor$
$$
d(\sigma)=\frac{d(\sigma0)+d(\sigma1)}{2}.
$$
\end{definition}

Intuitively, a martingale represents a betting strategy where a player successively bets money on the values of the bits of an infinite binary sequence (doubling its stake when the guess is correct); $d(\sigma)$ then represents the capital of the player after betting on initial segment~$\sigma$. With this intuition, a martingale succeeds against a sequence~$z$ if $\limsup_n d(z \uh n) = +\infty$. A computably random sequence is a sequence against which no computable betting strategy succeeds. In other words:

\begin{definition}
A sequence~$z \in \cs$ is \emph{computably random} if and only if for every computable martingale~$d$, $\limsup_n d(z \uh n) < +\infty$.
\end{definition}

We denote by $\MLR$, $\WR$, $\DiffR$, $\DemR$, $\CR$ the classes of Martin-L\"of random, weakly $2$-random, difference random, Demuth random and computably random sequences respectively.\\

Given a sequence~$z \in \cs$, the following implications
$$
\begin{array}{rcccccc}
z \in \WR & & & & & \\
 & \searrow & & & &\\
 & & z \in \DiffR  & \longrightarrow & z \in \MLR & \longrightarrow & z \in \CR \\
 & \nearrow & & & & \\
  z \in \DemR\\
\end{array}$$
hold, and no other implication holds in general (other than those which can be derived by transitivity from the above diagram). See, for example,~\cite{Nies2009} for a detailed exposition.

\section{Density}
\label{sec:density}

\subsection*{Covering non density-one  points}

We want to  show that sufficiently random reals are positive density points, or even density-one points. To do so, we must cover points of low density with sets of small measure. We give two bounds.

\begin{lemma}\label{lem:bounds}
Let $\+C\subseteq[0,1]$ be a closed set. Fix $\eps\in(0,1)$ and let
\[
U = \{z\mid\exists \text{ open interval } I\colon z\in I \text{ and } \leb_I(\+C)<\eps\}.
\]
Then \textnormal{(1)} $\leb(\+C\cap U)\leq 2\eps$,\quad and \textnormal{(2)} $\leb(U)\leq 2\frac{1-\leb(\+C)}{1-\eps}$.
\end{lemma}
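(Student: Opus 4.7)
The plan is to reduce the two-sided witness condition in the definition of $U$ to two one-sided versions, then apply a rising-sun argument to each half. Introduce
\[
U^+ = \{z \in [0,1] : \exists y \in (z,1],\ \lambda(\+C \cap (z,y)) < \eps(y-z)\}
\]
and let $U^-$ be defined symmetrically (with witnesses $y < z$). For any $z \in U$ with witness interval $I = (\alpha,\beta)$, the inequality $\lambda(\+C \cap I)/\lambda(I) < \eps$ is a length-weighted average of the $\+C$-densities of the subintervals $(\alpha,z)$ and $(z,\beta)$, so at least one of these one-sided densities is itself strictly below $\eps$; consequently $U \subseteq U^+ \cup U^-$.

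To bound $U^+$ (the analysis of $U^-$ is symmetric), set $F(z) = \lambda(\+C \cap [0,z])$ and $h(z) = F(z) - \eps z$. Both are continuous on $[0,1]$, and the defining condition of $U^+$ rewrites as $U^+ = \{z : \exists y > z,\ h(y) < h(z)\}$. Applying F.~Riesz's rising sun lemma to $-h$ decomposes $U^+$ as a countable disjoint union of its maximal open subintervals $(a_k,b_k)$, with $h(a_k) \geq h(b_k)$ for each $k$ (equality for components whose left endpoint is interior to $[0,1]$, and the same weak inequality for the at-most-one component with left endpoint $0$). Unpacking this yields, on each component,
\[
\lambda(\+C \cap (a_k,b_k)) \leq \eps(b_k-a_k)
\qquad \text{and} \qquad
\lambda(\+C^c \cap (a_k,b_k)) \geq (1-\eps)(b_k-a_k).
\]

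Summing over $k$ gives $\lambda(\+C \cap U^+) \leq \eps\,\lambda(U^+) \leq \eps$, and also $(1-\eps)\,\lambda(U^+) \leq \lambda(\+C^c) = 1 - \lambda(\+C)$; symmetric bounds hold for $U^-$. Combining via $U \subseteq U^+ \cup U^-$ then produces
\[
\lambda(\+C \cap U) \leq 2\eps \qquad \text{and} \qquad \lambda(U) \leq \frac{2(1-\lambda(\+C))}{1-\eps},
\]
which are (1) and (2). The main technical ingredient is the rising-sun decomposition of $U^+$; this is where the factor of $2$ in the lemma arises, split evenly between the two halves $U^+$ and $U^-$. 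A self-contained alternative (avoiding a citation to Riesz) instead extracts from the cover of $U$ by witness intervals a countable subcover of pointwise overlap at most $2$ using the standard 1D minimal-subcover argument, and integrates the pointwise bound $\sum_j \mathbf{1}_{J_j} \leq 2$ against $\mathbf{1}_\+C$ and $\mathbf{1}_{\+C^c}$ to recover the same two estimates.
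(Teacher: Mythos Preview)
Your proof is correct and takes a genuinely different route from the paper's. The paper first reduces (by approximation) to the case where $[0,1]\setminus\+C$ is a finite union of open intervals, then introduces an ad hoc family of \emph{fat intervals}---maximal intervals $I$ with $\leb_I(\+C)\le\eps$ whose endpoints are aligned with the finitely many components of the complement of $\+C$---and shows that these cover $U$. It then greedily extracts a chain $I_0\prec I_1\prec\cdots\prec I_N$ of fat intervals covering $U$ whose even-indexed terms are pairwise disjoint and whose odd-indexed terms are pairwise disjoint; summing over each of these two disjoint subfamilies yields the factor of $2$ in both bounds.

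Your argument replaces this combinatorial construction entirely: the splitting $U\subseteq U^+\cup U^-$ into one-sided witnesses, followed by the Riesz rising-sun lemma applied to $h(z)=\leb(\+C\cap[0,z])-\eps z$, gives the component decomposition of $U^+$ with $h(a_k)\ge h(b_k)$ for free, and the two estimates follow immediately by summing. This is shorter, avoids the reduction to finitely many complementary intervals, and makes the origin of the constant $2$ more transparent (one $\eps$ from each side). The paper's fat-interval construction is self-contained and does not cite an external lemma, but is considerably more intricate; it does not appear to yield any additional uniformity or effectiveness that is exploited later, so your approach is a net simplification.
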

\begin{proof}
Both bounds would be easy to prove (even without the factors of $2$) if we could cover $U$ with an anti-chain of intervals $J$ such that $\leb_J(\+C)\leq\eps$. This would be possible if we were working in Cantor space (with the corresponding density notion), but it is not always possible on the real interval. Most of the work we do below is to get around this obstacle by producing two such anti-chains that together cover $U$, accounting for the extra factors of $2$.

First note that, without loss of generality, we may assume that the complement of $\+C$ is the disjoint union of a finite collection $\+ S$ of open intervals. To see this, let $\+C = \bigcap_{s\in\N} \+C_s$, where $\+C_0\supseteq \+C_1\supseteq\cdots$ is a nested sequence of closed sets whose complements each consist of a finite number of intervals. Then $U = \bigcup_{s\in\N} U[\+C_s]$, where $U[\+C_s]$ is defined analogously to $U$ with $\+C_s$ in place of $\+C$. Note that $U[\+C_s]$ is an increasing nested sequence of open sets and that $\+C\cap U[\+C_s]\subseteq \+C_s\cap U[\+C_s]$. Hence, bounding each $\leb(\+C_s\cap U[\+C_s])$ by $2\eps$ would prove (1). Because $\leb(\+C_s)$ converges to $\leb(\+C)$, bounding each $\leb(U[\+C_s])$ by $2(1-\leb(\+C_s))/(1-\eps)$ would prove (2).

Call a non-trivial interval $I\subseteq[0,1]$ \emph{fat} if:
\begin{itemize}
\item[(i)] $\leb_I(\+C)\leq\eps$.
\item[(ii)] Either the left endpoint of~$I$ is also the left endpoint of some interval in $\+S$, or the right endpoint of~$I$ is also the right endpoint of some interval in $\+S$.
\item[(iii)] $I$ is maximal (w.r.t.\ inclusion) within the family of intervals having properties (i) and (ii).
\end{itemize}

\noindent {\em Claim $1$.} Any interval $I$ satisfying (i) and (ii) is contained in some fat interval.\\ 
\noindent {\em Subproof.}
Let $=[a_0,b_0],\dots,[a_N,b_N]$ be the elements of $\+S$. For each $i\leq N$, let 
\begin{align*}
	a'_i &= \min \{ x \leq b_i \, \mid \, \lambda_{[x,b_i]}(\+ C) \leq \eps\}\text{, and}\\
	b'_i &= \max \{ x \geq a_i \, \mid \, \lambda_{[a_i,x]}(\+ C) \leq \eps\}.
\end{align*}
Define $\+ F_0$ to be the family of intervals $\{ [a'_i,b_i] \, \mid \, i \geq 0\}  \cup \{ [a_i,b'_i] \, \mid \, i \geq 0\}$ and let $\+ F$ be the maximal elements of $\+ F_0$. Observe that $\+ F$ is in fact the family of fat intervals. Indeed, if $I=[z,b_i]$ is fat, then $z=a'_i$ by maximality, so $I \in \+ F_0$. Applying maximality again, $I \in \+ F$. The same argument holds if $I=[a_i,z]$ is fat. Conversely, if $I \in \+ F$ were not fat, it would be strictly contained in an interval $I'$ of type $[x,b_i]$ or $[a_i,x]$ such that $\lambda_{I'}(\+ C) \leq \eps$, and therefore be strictly contained either in $[a'_i,b_i]$ or $[a_i,b'_i]$ which are both in $\+ F_0$. This contradicts the maximality of $I$ inside $\+ F_0$. The same argument shows that if an interval~$I$ satisfies properties (i) and (ii) and is not fat, it is contained in a maximal element of $\+ F$, i.e., in a fat interval. 
\hfill $\Diamond$

\vsp

\noindent {\em Claim $2$.} Every point $x \in U$ is contained in some fat interval. \\
\noindent {\em Subproof.}
If $x$ is in the complement of $\+ C$, then it is contained in some $(a_i,b_i) \in \+ S$ and the result is clear. So assume that $x \in U \cap \+ C$. By definition, there exists an interval $I$ containing~$x$ such that $\leb_I(\+C)\leq\eps$, which in particular implies that~$I$ contains some $J \in \+ S$, to which~$x$ does not belong. Without loss of generality suppose that~$J$ is on the right of~$x$ and that $J$ is the rightmost interval of $\+ S$ contained in $I$. Consider the interval~$I'$ obtained by shifting (keeping length constant) the interval~$I$ to the left so that the right endpoint of~$I'$ is the right endpoint of~$J$. Shifting $I$ to $I'$ amounts to removing some interval on the left and adding an interval of the same length on the right. The removed interval is entirely contained in $\+ C$ (this is because~$J$ is the rightmost interval of $\+ S$ contained in~$I$), therefore $\lambda_{I'}(\+ C) \leq \lambda_{I}(\+ C)\leq \eps$. So~$I'$ satisfies conditions (i) and (ii) and still contains~$x$. By our first claim, there is a fat interval $I''$ containing~$I'$, proving our second claim. \hfill $\Diamond$

\vsp

For  any two intervals $I, J$, we write $I \preceq J$ if $\min(I) \le \min (J)$, and $I \prec J$ if $I \preceq J$ and $I \neq J$. If $I \prec J$ we say that $I$ is \emph{to the left of} $J$ and that $J$ is \emph{to the right of} $I$. By definition, no fat interval $I$ can properly contain another fat interval~$J$. So for fat intervals, $I \preceq J$ implies $\max (I) \le \max (J)$. Therefore, on fat intervals, the linear order $\preceq$ defined by the order of their left endpoints is the same as one defined by the order of their right endpoints.

Now, build a sequence  $I_0 \prec  I_1 \prec  \dots$ of intervals as follows. Let $I_0$ be the leftmost fat interval. Suppose that  $I_n$ has been defined.  \textit{(i)} If there is a fat interval $J \succ I_n$ that intersects $I_n$, then let $I_{n+1}$ be the rightmost such interval.
\textit{(ii)} If not, but there is a fat interval $ J \succ I_n$, let $I_{n+1}$ be the leftmost such interval. 
\textit{(iii)} Otherwise, terminate the sequence. We make the following observations. 

\bi
\item[(a)] \emph{The sequence terminates} because there are only finitely many fat intervals. Let $I_N$ be the last interval in the sequence.

\item[(b)]\emph{It is not possible for $I_n$ to intersect $I_{n+2}$,} because otherwise $I_{n+1}$ would not have been the rightmost fat interval intersecting $I_n$.


\item[(c)] \emph{If $I$ is a fat interval, then $I\subseteq \bigcup_{n\leq N} I_n$.} 
By choice of $I_0$, there is a largest $r$ such that $\min(I_r) \leq \min(I)$. If $I=I_r$ we are done, so assume otherwise. If $I_r \cap I_{r+1}\neq \emptyset$, then $I \subseteq I_r \cup I_{r+1}$. On the other hand, If $I_r \cap I_{r+1} = \emptyset$, then $I_r \cap I = \emptyset$, because no fat interval to the right of $I_r$ intersects $I_r$. But then $\min(I_{r+1}) \leq \min(I)$ by the choice of $I_{r+1}$ as the leftmost fat interval to the right of $I_r$, contradicting the choice of $r$. \ei

Note that by (b), the even indexed members of $I_0, I_1, \dots, I_N$ are a disjoint sequence of fat intervals, as are the odd indexed members. By (c) and the fact that every $z\in U$ is contained in a fat interval, $U\subseteq \bigcup_{n\leq N} I_n$. We are ready to prove (1) and (2). 

For (1), we will show that 
$\leb(\+C\cap\bigcup_{2m\leq N} I_{2m})\leq \eps$ and $\leb(\+C\cap\bigcup_{2m+1\leq N} I_{2m+1})\leq \eps$. The first equation for the even subsequence can be shown as follows:
 \[
\leb\Bigg(\+C\cap \bigcup_{2m\leq N} I_{2m}\Bigg) = \sum_{2m\leq N} \leb(\+C\cap I_{2m}) = \sum_{2m\leq N} |I_{2m}|\;\leb_{I_{2m}}(\+C) \leq \sum_{2m\leq N} |I_{2m}|\;\eps \leq \eps.
\]
The proof for the second equation is analogous, completing the proof of (1).

For (2), we first show that $\leb(\bigcup_{2m\leq N} I_{2m})\leq (1-\leb(\+C))/(1-\eps)$:
\begin{multline*}
\leb\Bigg(\bigcup_{2m\leq N} I_{2m}\Bigg)(1-\eps) = \sum_{2m\leq N} |I_{2m}|\;(1-\eps) \leq \sum_{2m\leq N} |I_{2m}|\;\leb_{I_{2m}}([0,1]\smallsetminus\+C) \\
= \sum_{2m\leq N} \leb(([0,1]\smallsetminus\+C)\cap I_{2m}) = \leb\Bigg(([0,1]\smallsetminus\+C)\cap\bigcup_{2m\leq N} I_{2m}\Bigg) \leq \leb([0,1]\smallsetminus\+C) = 1-\leb(\+C).
\end{multline*}
Analogously, we see that $\leb(\bigcup_{2m+1\leq N} I_{2m+1})\leq (1-\leb(\+C))/(1-\eps)$, proving (2).
\end{proof}

\subsection{Positive density points}

Recall that Franklin and Ng~\cite{FranklinNg} showed that $z$ is \emph{difference random} iff it is Martin-L\"of random and Turing incomplete. In this section we use the notion of lower density for effectively closed classes to give an analytic characterization of difference randomness.
\begin{theorem}\label{thm:density_Turing}
Let $z$ be a Martin-L\"of random real. Then $z\ngeq_T\emptyset'$ iff $z$ is a positive density point.
\end{theorem}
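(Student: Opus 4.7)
The plan is to combine Lemma~\ref{lem:bounds} with Franklin and Ng's characterization of difference randomness ($z$ is ML-random and $z\ngeq_T\emptyset'$ iff $z$ is difference random) to establish the two directions of the biconditional.

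For the direction $z\ngeq_T\emptyset' \Rightarrow z$ is a positive density point, I argue by contrapositive. Suppose some effectively closed $\+C\ni z$ satisfies $\varrho(\+C|z)=0$. Define
\[
U_n = \{y\in[0,1] : \exists \text{ rational open interval } I\ni y \text{ with } \lambda_I(\+C) < 2^{-n}\}.
\]
Since the condition on $I$ is $\Sigma^0_1$, the $U_n$ form a uniformly c.e.\ open sequence. Part (1) of Lemma~\ref{lem:bounds} gives $\lambda(\+C\cap U_n)\leq 2\cdot 2^{-n}$, so after re-indexing $((U_n),\+C)$ is a difference test. The density-zero hypothesis places $z$ in every $U_n$, so the test captures $z$; by Franklin--Ng this forces $z\geq_T\emptyset'$, contradicting the hypothesis.

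For the converse direction, again by contrapositive: assume a difference test $((U_n),\+C)$ captures $z$ and show $\varrho(\+C|z)=0$, so that $\+C$ itself witnesses failure of positive density. Suppose for contradiction $\varrho(\+C|z)=c>0$, and fix an integer $k$ with $1/k < c/2$. The idea is to produce a Solovay test that captures $z$, contradicting ML-randomness. Choose a computable stage function $s(n)\to\infty$ so that the clopen approximation $\+C_{s(n)}$ of $\+C$ satisfies $\lambda(U_n^{s(n)}\cap \+C_{s(n)})=O(2^{-n})$, and set
\[
V_n^{(k)} := \bigcup\Big\{(a,b) : [a,b]\subseteq U_n^{s(n)},\ b-a<2^{-n/2},\ \lambda\big((a,b)\cap \+C_{s(n)}\big)\geq (b-a)/k\Big\}.
\]
Each contributing $I$ satisfies $|I|\leq k\,\lambda(I\cap \+C_{s(n)})$; extracting a disjoint subfamily via a standard one-dimensional disjointification gives $\lambda(V_n^{(k)})=O(k\cdot 2^{-n})$, so $(V_n^{(k)})_n$ is a Solovay test. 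On the other hand, for $n$ large the positive-density assumption supplies a rational sub-interval of $U_n^{s(n)}$ containing $z$ of length below $2^{-n/2}$ whose $\+C$-relative measure exceeds $c/2>1/k$, and since $\+C_{s(n)}\supseteq \+C$ this interval witnesses $z\in V_n^{(k)}$ for all large $n$. The Solovay test therefore captures $z$, contradicting ML-randomness, and hence $\varrho(\+C|z)=0$ as desired.

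The main technical obstacle is the careful choice of $s(n)$: it must be computable so that $V_n^{(k)}$ is uniformly c.e., yet grow fast enough that both (i) $\lambda(U_n^{s(n)}\cap \+C_{s(n)})$ is close to its limit $\lambda(U_n\cap \+C)\leq 2^{-n}$, and (ii) the enumeration $U_n^{s(n)}$ of $U_n$ eventually contains a small rational open interval around $z$. Balancing these competing demands while keeping everything uniformly c.e.\ is the crux of the argument.
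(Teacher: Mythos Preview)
Your first direction (density zero $\Rightarrow$ failure of a difference test) is correct and is exactly how the paper proceeds: the sets $U_n$ together with Lemma~\ref{lem:bounds}(1) give the required difference test.

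The second direction, however, has a genuine gap precisely where you flag the ``main technical obstacle''. There is no computable stage function $s(n)$ satisfying your requirements (i) and (ii) simultaneously, and this is not a matter of care but an actual obstruction. For (i) you need $\lambda(U_n^{s(n)}\cap\+C_{s(n)})=O(2^{-n})$. Since $U_n^{s(n)}\cap\+C_{s(n)}\subseteq (U_n\cap\+C)\cup(\+C_{s(n)}\smallsetminus\+C)$, this would force $\lambda(\+C_{s(n)}\smallsetminus\+C)=O(2^{-n})$; but the rate at which $\lambda(\+C_s)$ converges to $\lambda(\+C)$ is in general not computably bounded (the measure of an effectively closed class is only upper semicomputable). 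For (ii) you need $U_n^{s(n)}$ to contain a small interval around $z$; but the stage at which $z$ enters $U_n$ can grow faster than any computable function of $n$, so for any computable $s$ it may happen that $z\notin U_n^{s(n)}$ for all large $n$. Capping the enumeration of $V_n^{(k)}$ at a fixed measure does not help either: once the cap is reached, the interval witnessing $z$ may simply be excluded.

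The paper avoids this entirely by a different construction. Fixing $r$, it builds a nested sequence $(G_m)_m$ of uniformly c.e.\ open classes with $\lambda(G_m)\le(1-2^{-r-1})^m$, where $G_{m+1}$ is obtained inside each basic cylinder $[\sigma]$ of $G_m$ by enumerating $U_{|\sigma|+r+1}\cap[\sigma]$ but capping its measure at $2^{-|\sigma|}(1-2^{-r-1})$. Since $z$ is ML-random there is a least $m$ with $z\notin G_m$; at the witnessing $\sigma\prec z$ either the cap was hit---forcing $\lambda_\sigma(U_{|\sigma|+r+1})>1-2^{-r-1}$ and hence $\lambda_\sigma(\+C)\le 2^{-r}$ by the difference-test bound---or it was not, in which case $z\in G_m$ after all. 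The iterative capping is exactly what replaces your unattainable choice of $s(n)$.
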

As mentioned in the introduction, this result was recently used by Day and Miller~\cite{DM} to answer an open question about the $K$-trivial sets (see \cite[Question 4.8]{MN06}). The theorem follows immediately from Lemma~\ref{lem:density}. If $z$ is not a positive density point, this is witnessed by an effectively closed class. Similarly, if $z$ is not difference random, the test covering $z$ includes an effectively closed class. The lemma says that as long as $z$ is Martin-L\"of random, the classes witnessing both properties are the same.

\begin{lemma}\label{lem:density}
Let $z$ be a Martin-L\"of random real. Let $\+C$ be an effectively closed class containing $z$. The following are equivalent:
\begin{enumerate}
\item[(i)] $z$ fails a difference test of the form $((U_n)\sN n,\+C)$.
\item[(ii)] $z$ has lower Lebesgue density zero in $\+C$, i.e., $\varrho(\sC | z)=0$.
\end{enumerate}
\end{lemma}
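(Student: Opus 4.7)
My plan splits along the two implications, with the reverse direction being the technically delicate part.

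For $(ii) \Rightarrow (i)$, the Martin-L\"of randomness of $z$ is not used: the implication follows directly from Lemma~\ref{lem:bounds}(1). Setting
\[
U_n = \bigl\{x \in [0,1] : \exists \text{ open interval } I \ni x \text{ with } \lambda_I(\+C) < 2^{-(n+1)}\bigr\},
\]
the upper semicomputability of $\lambda_I(\+C)$ (via the decreasing approximation $\+C_s \downarrow \+C$) makes the defining condition $\Sigma^0_1$, so each $U_n$ is effectively open uniformly in $n$. Lemma~\ref{lem:bounds}(1) gives $\lambda(\+C \cap U_n) \leq 2^{-n}$, so $((U_n)_n, \+C)$ is a difference test, and the liminf-zero hypothesis on $\varrho(\+C \mid z)$ supplies for each $n$ some interval $I \ni z$ with $\lambda_I(\+C) < 2^{-(n+1)}$, placing $z$ in $U_n \cap \+C$.

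For $(i) \Rightarrow (ii)$ I argue by contrapositive. Suppose $z$ is captured by a difference test $((U_n)_n, \+C)$ and assume, for contradiction, that $\varrho(\+C \mid z) > q$ for some rational $q > 0$. The goal is to extract a Martin-L\"of test $(V_n)_n$ capturing $z$, which will contradict the Martin-L\"of randomness of $z$. The bridge between capture and density is the observation that, since $z$ is a positive-density point and $U_n$ is an open neighborhood of $z$, every sufficiently small dyadic interval $I \ni z$ with $\overline I \subseteq U_n$ satisfies $\lambda_I(\+C) > q$, hence
\[
q|I| < \lambda(I \cap \+C) \leq \lambda(U_n \cap \+C) \leq 2^{-n},
\]
forcing $|I| < 2^{-n}/q$. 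In particular, the maximal open neighborhood of $z$ inside $U_n$ has radius at most $2^{-n-1}/q$ for all large $n$.

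To turn this into an effectively open $V_n$ of small measure, I would enumerate in stages those dyadic intervals $I$ for which $\overline I \subseteq U_n[s]$ and the over-approximation satisfies $\lambda(I \cap \+C_s) > q|I|$, admitting them into $V_n$ under a Vitali-style antichain discipline analogous to the odd/even decomposition in the proof of Lemma~\ref{lem:bounds}. For any disjoint subfamily of intervals with genuine density $\lambda_I(\+C) \geq q$, the bound
\[
\sum |I| \;\leq\; \frac{1}{q} \sum \lambda(I \cap \+C) \;\leq\; \frac{\lambda(U_n \cap \+C)}{q} \;\leq\; \frac{2^{-n}}{q}
\]
together with a constant-factor Vitali loss yields $\lambda(V_n) = O(2^{-n})$. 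The main obstacle is reconciling the natively $\Pi^0_1$ character of the intended density condition $\lambda_I(\+C) \geq q$ with the $\Sigma^0_1$ character needed for effective openness; the intended remedy exploits the monotonicity $\+C_s \downarrow \+C$ via a careful commitment policy that refuses to admit $I$ on the basis of an overly generous early approximation. Once $(V_n)_n$ is set up, the positive-density hypothesis together with $z \in U_n$ places $z$ in $V_n$ for every large $n$, producing the desired contradiction with Martin-L\"of randomness.
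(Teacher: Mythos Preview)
Your $(ii)\Rightarrow(i)$ direction is correct and is exactly what the paper does.

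For $(i)\Rightarrow(ii)$, however, there is a genuine gap, and it is precisely the one you flag yourself without resolving. You propose to enumerate into $V_n$ those dyadic intervals $I$ with $\overline{I}\subseteq U_n[s]$ and $\lambda(I\cap\+C_s)>q|I|$ at some stage $s$. Since $\+C_s\supseteq\+C$, the stage-$s$ condition is \emph{weaker} than the true condition $\lambda(I\cap\+C)>q|I|$, so you may enumerate intervals whose genuine density in $\+C$ is far below $q$; for such intervals your displayed estimate $\sum|I|\le q^{-1}\lambda(U_n\cap\+C)$ simply fails, and nothing controls $\lambda(V_n)$. A ``commitment policy that refuses to admit $I$ on the basis of an overly generous early approximation'' would have to detect, at a finite stage, that $\lambda(I\cap\+C)\ge q$; but this is exactly the $\Pi^0_1$ information you cannot access. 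No Vitali discipline repairs this: a single over-enumerated interval can already be too large.

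The paper's argument avoids the density predicate entirely. Fixing $r$, it builds a nested sequence $G_0\supseteq G_1\supseteq\cdots$ of uniformly c.e.\ open classes with $\lambda(G_m)\le(1-2^{-r-1})^m$: starting from $G_0=[0,1]$, for each $\sigma$ in a prefix-free code for $G_m$ one sets $G_{m+1}\cap\Cyl{\sigma}$ to be $U_{|\sigma|+r+1}\cap\Cyl{\sigma}$, enumerated only up to a measure cap of $(1-2^{-r-1})2^{-|\sigma|}$. Since $z$ is ML-random, $z\notin G_m$ for some least $m>0$; taking $\sigma\prec z$ in $G_{m-1}$, the cap was \emph{not} reached (else $z$ would enter $G_m$), so $\lambda_\sigma(U_{|\sigma|+r+1})>1-2^{-r-1}$, and combining this with $\lambda_\sigma(\+C\cap U_{|\sigma|+r+1})\le 2^{-r-1}$ from the difference test yields $\lambda_\sigma(\+C)\le 2^{-r}$. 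As $r$ was arbitrary, $\varrho(\+C\mid z)=0$. The key device you are missing is this measure-capping trick, which sidesteps any need to test density effectively.
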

\begin{proof} We use the notation established in Subsection~\ref{ss:basic}.
(ii) $\Rightarrow$ (i): Suppose that $\varrho(\sC |z) =  0$. For $n\in \NN$, let
\[
U_n = \{x\mid\exists \text{ interval } I\colon x\in I \text{ and } \leb_I(\+C)<2^{-n}\}.
\]
Since $\+C$ is effectively closed, these classes are uniformly effectively open. By part~(1) of Lemma~\ref{lem:bounds}, $\leb(\+C\cap U_n)\leq 2^{-n+1}$, so $((U_{n+1})\sN n,\+C)$ is a difference test (see Definition~\ref{def:diff_test}). But clearly $z\in\+C\cap\bigcap_n U_n$, so $z$ fails this test.

(i) $\Rightarrow$ (ii): Suppose that $((U_n)\sN n,\+C)$ is a difference test that captures $z$. Because no ML-test captures $z$, it makes sense that this difference test must use its additional strength in the close vicinity of $z$. In other words, unless $\+C$ removes large parts of (some) $U_n$ near $z$, we could use the difference test to build a ML-test covering $z$. This is the idea we exploit.

We may assume that $U_n = \Cyl{D_n}$, where $(D_n)$ is a uniformly c.e.\ sequence of prefix-free sets of strings. By making this assumption, we are potentially removing dyadic rationals from the difference test. Since $z$ is not rational,   the difference test will still capture it.

Fix $r\in\NN$. We define a uniformly c.e.\ sequence of open classes $(G_m) \sN m $ with $\leb G_m\leq (1- \tp{-r-1})^m$ such that $z \not \in \bigcap_m G_m$ implies that there is a $\sss\prec z$ for which $\leb_\sss(\+C)\leq \tp{-r}$. Since this holds for every $r$, the hypothesis that $z$ is ML-random will imply that $\varrho(\sC | z ) =0$.

Let $G_0=[0,1]$. Suppose that $G_m$ has been defined and that $G_m=\Cyl{B_m}$ for some c.e.\ prefix free set $B_m$. We define $G_{m+1}$ as follows. Once $\sigma$ enters $B_m$, we declare
\[
G_{m+1} \cap \Cyl{\sss} = \left(U_{\sssl + r +1} \cap \Cyl{\sss} \right)^{(\le \tp{-\sssl}(1- \tp{- r- 1}))},
\]
where $\+W^{(\le \eps)}$ for a $\SI 1$ class $\+W$ is produced by running an enumeration for  $\+W$ but removing all strings from the enumeration that would make the total measure of enumerated strings 
exceed  $\eps$.
It is not hard to see that \bc $\leb G_{m+1}\leq (1- \tp{-r-1})\leb G_m\leq (1- \tp{-r-1})^{m+1}$ \ec and that $G_{m+1}=\Cyl{B_{m+1}}$ for some c.e.\ prefix-free set $B_{m+1}$.

Since $z$ is ML-random, there is a minimal $m > 0$ such that $z \not \in G_m$. The minimality of $m$ implies that there is a $\sss \in B_{m-1}$ with $\sss \prec z$. We show that $\leb_\sss(\+C)\leq \tp{-r}$.  Let $U  = U_{\sssl + r +1}$. Note that $\leb_\sss(U)> 1- \tp{-r-1}$, otherwise $z$ would enter $G_m$. Also $\leb_\sss(\+C\cap U) \le  2^{\sssl}\leb(\+C\cap U)\leq \tp{-r-1}$ by the definition of a difference test. But
\[
\leb_\sss(\+C) + \leb_\sss(U) - \leb_\sss(\+C\cap U)\leq 1,
\]
which implies that $\leb_\sss(\+C) \le \tp{-r}$, as required.
\end{proof}

\begin{remark} \label{rem:dyadic intervals} A basic dyadic interval has the form $[r \tp{-n}, (r+1) \tp{-n}]$ where $r \in \mathbb Z, n \in \mathbb N$.  The lower dyadic density of a set $\sC \subseteq \R$ at a point~$z$ is the variant  by only considering basic dyadic intervals containing $z$:  
$$\varrho_2(\sC | z):=\liminf_{z \in I \lland |I| \rightarrow 0} \frac{\lambda(I\cap \sC)}{|I|}, $$
where $I$ ranges over basic dyadic intervals. Clearly $\varrho_2(\sC | z) \ge \varrho(\sC | z)$. 

We note that the proof of  (i)$\to$(ii) actually shows that $\sC $ has lower dyadic density $0$ at $z$. Thus,   a  ML-random real which has positive dyadic density in every effectively closed class containing it already is a positive density point.
\end{remark}

\subsection{Density-one points}
\label{subsec:density}

We next address the question of how random $z$ must be to ensure that it is a density-one point. In other words, we want to ensure that if $\+C$ is an effectively closed class containing $z$, then $\varrho(\sC | z)=1$. By \cite{Day.Miller:nd},   difference randomness of $z$ is not enough.

First, note that it is sufficient to take $z$ to be weakly $2$-random. To see this, let $\eps>0$ be rational. Then $\varrho(\sC | z)<1-\eps$ if and only if
\[
\forall\beta>0\ \exists \gamma, \delta< \beta\colon \frac{\lambda([z-\gamma ,z+\delta] \cap \+C)}{\gamma + \delta} < 1-\eps,
\]
which is a $\Pi^0_2$ condition (because the measure of $\+C$ is approximable from above). By the Lebesgue density theorem, for each $\+C$ and $\eps>0$, the set of $z \in \+ C$ for which $\varrho(\sC | z)<1-\eps$ is null, so they are captured by a generalized ML-test.

The next lemma lets us give a somewhat more sophisticated partial answer. Informally, it says that a Martin-L\"of random that is not a density-one point must compute a fast growing function.

\begin{lemma}\label{lem:fast}
Let $z$ be  a Martin-L\"of random  real that is not a density-one point. Then $z$ computes a function $f\colon\N\to\N$ (that witnesses the fact that it is not a density-one point) such that: 
\begin{quote}
	for every oracle $A$, if $z$ is Martin-L{\"o}f random relative to $A$, \\
	then $f$ dominates every $A$-computable function.
\end{quote}
\end{lemma}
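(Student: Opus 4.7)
The plan is to turn the failure of density-one into a $z$-computable ``first-witness'' function $f$, and then refute $f(n) < g(n)$ infinitely often by building an $A$-Solovay test that captures $z$.

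\medskip
\noindent\textbf{Step 1 (definition of $f$).} Since $z$ is not a density-one point, fix an effectively closed $\+C \ni z$ with $\varrho(\+C \mid z) < 1$ and a rational $\eps > 0$ such that $\varrho(\+C \mid z) < 1 - \eps$. Let $W = [0,1] \setminus \+C$, with its canonical effective enumeration $W = \bigcup_s W_s$, each $W_s$ a finite union of rational open intervals. Define
\[
f(n) \;=\; \min\bigl\{s : \exists a, b \in \QQ,\ a \leq z \leq b,\ b - a \leq 2^{-n},\ \lambda([a,b] \cap W_s) > \eps(b-a)\bigr\}.
\]
Totality follows from the choice of $\eps$, and $f \leq_T z$ because $z$ decides each condition ``$a \leq z \leq b$'' while the remaining conditions on $(a,b,s)$ are computable. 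Intuitively, $f(n)$ measures how quickly the enumeration of $W$ exposes a low-density neighborhood of $z$ at scale $2^{-n}$.

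\medskip
\noindent\textbf{Step 2 (strategy for domination).} Fix an oracle $A$ such that $z$ is Martin-L\"of random relative to $A$, and fix an $A$-computable function $g$. Assume for contradiction that $S = \{n : f(n) \leq g(n)\}$ is infinite. For each $n \in S$ the search defining $f(n)$ exhibits a rational interval $I_n \ni z$ with $|I_n| \leq 2^{-n}$ and $\lambda(I_n \cap W_{g(n)}) > \eps|I_n|$. The natural $A$-c.e.\ open set
\[
V_n \;=\; \bigcup\bigl\{I \text{ rational interval} : |I| \leq 2^{-n},\ \lambda_I(W_{g(n)}) > \eps\bigr\}
\]
contains $z$ whenever $n \in S$, and Lemma~\ref{lem:bounds}(2) applied to $[0,1] \setminus W_{g(n)}$ with parameter $1-\eps$ gives $\lambda(V_n) \leq 2\lambda(W_{g(n)})/\eps \leq 2\lambda(W)/\eps$. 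Unfortunately this bound is uniform in $n$, so $(V_n)$ is neither a Solovay nor an ML-test.

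\medskip
\noindent\textbf{Step 3 (refinement and main obstacle).} The fix is to concentrate on the ``new mass'' entering $W$ between successive $g$-stages: replace $W_{g(n)}$ with the $A$-c.e.\ open set $W_{g(n)} \setminus W_{g(n-1)}$ (setting $g(-1) = 0$). A Vitali-covering argument bounds the measure of the analogous set $U_n$ by a constant times $(\lambda(W_{g(n)}) - \lambda(W_{g(n-1)}))/\eps$; telescoping gives $\sum_n \lambda(U_n) \leq C\lambda(W)/\eps < \infty$, producing a genuine $A$-Solovay test. The remaining—and main—technical difficulty is to guarantee that $z \in U_n$ for infinitely many $n \in S$, as the witnessing interval $I_n$ may already have been heavy at stage $g(n-1)$, in which case the increment contributes too little mass inside $I_n$. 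This is handled by a multi-scale refinement: for each $k \geq 0$ one introduces a companion test $U_{n,k}$ using threshold $\eps/2^{k+1}$ on the same increment, and a short induction on $k$ shows that for every $n \in S$ some $(n', k)$ with $n' \leq n$ captures $z$. Weighting the scales carefully against the summable total mass of $W$ keeps the combined family an $A$-Solovay test, contradicting the $A$-ML-randomness of $z$ and thereby establishing $f \geq^* g$.
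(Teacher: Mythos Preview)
Your Steps~1--2 set up a natural scale-based witness function, but Step~3 is where the argument breaks down, and the gap is not merely a matter of missing details.

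The ``multi-scale refinement'' you sketch does not yield an $A$-Solovay test. With increments $\Delta_m = W_{g(m)}\setminus W_{g(m-1)}$ and thresholds $\eps/2^{k+1}$, Lemma~\ref{lem:bounds}(2) gives $\lambda(U_{m,k}) \le 2^{k+2}\lambda(\Delta_m)/\eps$, so
\[
\sum_{m,k}\lambda(U_{m,k}) \;\ge\; \frac{1}{\eps}\Bigl(\sum_k 2^{k+2}\Bigr)\Bigl(\sum_m \lambda(\Delta_m)\Bigr),
\]
which diverges. More generally, any pigeonhole over the decomposition $\lambda_{I_n}(W_{g(n)}) = \sum_{j\le n}\lambda_{I_n}(\Delta_j) > \eps$ against thresholds $(\eps b_j)_j$ requires $\sum_j b_j \le 1$; but then the measure bound on the test involves $\sum_j \lambda(\Delta_j)/b_j$, and there is no choice of $(b_j)$ making both sums finite for arbitrary $(\lambda(\Delta_j))_j$ with $\sum_j\lambda(\Delta_j)<\infty$. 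The phrase ``weighting the scales carefully'' is exactly where the difficulty lives, and no short induction resolves it.

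The paper avoids this obstacle by defining $f$ differently. Rather than indexing by scale, it sets $g(s)$ to be the least $t>s$ such that the \emph{fresh} holes $\+C_{s,t}=[0,1]\setminus\bigcup_{s\le m<t}\Cyl{\sigma_m}$ already witness $\lambda_I(\+C_{s,t})<\eps$ for some $I\ni z$, and then iterates: $f(n)=g^{\circ n}(0)$. The point is that the blocks $[f(n),f(n+1))$ now partition the enumeration and each block by itself produces a density drop around $z$, so no pigeonholing over increments is ever needed. With this $f$ one argues in two cases for an $A$-computable $h$ not dominated by $f$: if $h$ dominates $f$, iterate $h$ to get $k(n)=h^{\circ n}(0)$ and use blocks $\+C_{k(n),k(n+1)}$; otherwise use $\+C_{h(n),h(n+1)}$ directly at the crossover indices where $h(n)\le f(n)$ and $h(n+1)\ge f(n+1)$. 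In both cases the blocks are disjoint, so Lemma~\ref{lem:bounds}(2) telescopes to a finite sum and one obtains a genuine $A$-Solovay test capturing $z$. Your scale-based $f$ does not give this block structure, and that is the missing idea.
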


Taking $A=\emptyset$, the lemma shows that a Martin-L\"of random that is not a density-one point computes a function that dominates every computable function. Recall that Martin proved that $z$ is \emph{high} (i.e., $z'\geq_T\emptyset''$) iff $z$ computes a function that dominates every computable function.  Thus, 
if $z$ is a Martin-L\"of random and not a density-one point, then $z$ is high.

We can do significantly better. If $z$ is Martin-L\"of random, then by van Lambalgen's theorem, it is Martin-L\"of random relative to almost every $A$. Fix such an $A$. If $z$ is also not a density-one point, then the $f$ from the lemma dominates every $A$-computable function. In other words, $z$ is \emph{uniformly almost everywhere dominating}: it computes a function $f$ that dominates every $A$-computable function for almost every $A$. This property was introduced by Dobrinen and Simpson \cite{Dobrinen.Simpson:04}.
We may conclude:
\begin{theorem}\label{thm:aed}
If a Martin-L\"of random real $z$  is not a density-one point, then $z$ is uniformly almost everywhere dominating.
\end{theorem}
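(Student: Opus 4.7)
The plan is to derive the theorem from Lemma~\ref{lem:fast} via van Lambalgen's theorem, as outlined in the discussion preceding the statement. Assume $z$ is Martin-L\"of random and not a density-one point, so that Lemma~\ref{lem:fast} supplies a $z$-computable function $f$ with the stated uniform domination property. By van Lambalgen's theorem, the set of oracles $A$ relative to which $z$ is Martin-L\"of random has measure one, and for each such $A$ the lemma guarantees that $f$ dominates every $A$-computable function. This is precisely the definition of $z$ being uniformly almost everywhere dominating.

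The substance of the argument is therefore in Lemma~\ref{lem:fast}. Fix an effectively closed class $\+C\ni z$ and a rational $\eps>0$ witnessing $\varrho(\+C|z)<1-\eps$, together with a presentation $\+C=\bigcap_s\+C_s$ as a decreasing intersection of computable closed sets with rational endpoints. I would define $f\colon\N\to\N$ so that $f(n)$ is the least stage $s$ at which one can exhibit, using $z$ as an oracle to locate nearby rationals, an open interval $(q_1,q_2)$ with $q_1<z<q_2$, $q_2-q_1\le 2^{-n}$, and $\leb((q_1,q_2)\cap\+C_s)<(1-\eps/2)(q_2-q_1)$. The density-failure hypothesis combined with the fact that $\leb(I\cap\+C_s)$ decreases to $\leb(I\cap\+C)$ for each rational interval~$I$ ensures that $f$ is total and $z$-computable.

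Now suppose for contradiction that $g$ is $A$-computable with $g(n)\ge f(n)$ for infinitely many $n$, while $z$ is Martin-L\"of random relative to $A$. The natural covering family is the uniformly $A$-c.e.\ open class
\[
V^A_n=\bigcup\bigl\{(q_1,q_2):q_1,q_2\in\Q,\;q_2-q_1\le 2^{-n},\;\leb((q_1,q_2)\cap\+C_{g(n)})<(1-\eps/2)(q_2-q_1)\bigr\},
\]
which contains $z$ whenever $f(n)\le g(n)$. Applying Lemma~\ref{lem:bounds}(2) to $\+C_{g(n)}$ with threshold $1-\eps/2$ yields only the constant bound $\leb(V^A_n)\le 4(1-\leb(\+C))/\eps$, so $(V^A_n)$ is not by itself an $A$-Martin-L\"of test. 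The main obstacle is to amplify this into a genuine test; my plan is to adapt the iteration from the proof of Lemma~\ref{lem:density}, inductively producing a sequence of uniformly $A$-c.e.\ open classes $(G^A_m)$ with $\leb(G^A_m)\le(1-c)^m$ for some $c=c(\eps)>0$, by refining each cylinder $\Cyl{\sigma}\subseteq G^A_m$ through $V^A_n\cap\Cyl{\sigma}$ at a scale $n$ much larger than $|\sigma|$ and capping at a fraction $1-c$ of $\leb(\Cyl{\sigma})$. Scaling Lemma~\ref{lem:bounds}(2) inside $\Cyl{\sigma}$ shows that the cap binds only when $\leb_\sigma(\+C)$ is bounded below one, and in that alternative the same inequalities combine with the density-failure assumption to extract a Solovay test capturing $z$; either outcome contradicts $A$-Martin-L\"of randomness of $z$, yielding the required contradiction.
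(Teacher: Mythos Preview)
Your first paragraph---deriving the theorem from Lemma~\ref{lem:fast} via van Lambalgen---is exactly the paper's argument, and is correct.

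The difficulty is entirely in your sketch of Lemma~\ref{lem:fast}, which diverges from the paper and has a real gap. Your function $f$ records, for each scale $2^{-n}$, the first stage~$s$ at which the density drop is visible in the cumulative approximation $\+C_s$. With this $f$, knowing $g(n)\ge f(n)$ tells you $z\in V^A_n$, but as you note, Lemma~\ref{lem:bounds}(2) gives only a constant bound on $\leb(V^A_n)$. Your proposed fix---iterate inside cylinders as in Lemma~\ref{lem:density} and cap at a $(1-c)$-fraction---does not obviously keep $z$ inside $G^A_m$: once the cap truncates $V^A_n\cap\Cyl\sigma$, $z$ may well lie in the discarded part. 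The alternative you suggest (``the cap binds only when $\leb_\sigma(\+C)$ is bounded below one, and then extract a Solovay test'') is not a genuine dichotomy here: $\leb_\sigma(\+C)$ being bounded away from~$1$ along the cylinders containing $z$ is exactly the density-failure hypothesis you started with, so you have not gained any new $A$-effective information from~$g$. The deeper problem is that the density failures at different scales $2^{-n}$ are all witnessed by essentially the \emph{same} removed intervals near $z$, so the measures of the $V^A_n$ do not sum finitely and no telescoping occurs.

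The paper's proof of Lemma~\ref{lem:fast} avoids this by a different definition of $f$. It works with \emph{disjoint slices} $\+C_{s,t}=[0,1]\smallsetminus\bigcup_{s\le m<t}\Cyl{\sigma_m}$ of the complement rather than cumulative approximations, defines $g(s)$ to be the least $t>s$ at which a density drop is visible in $\+C_{s,t}$, and sets $f(n)=g^{\circ n}(0)$. Now, given an $A$-computable $h$ not dominated by $f$, one builds a Solovay test $(S_n)$ from slices $\+C_{k(n),k(n+1)}$ (where $k=h^{\circ n}(0)$ or $k=h$ depending on whether $h$ dominates $f$), and the key point is that the complements of these slices are \emph{pairwise disjoint}, so Lemma~\ref{lem:bounds}(2) yields $\sum_n\leb(S_n)\le 2(1-\eps)^{-1}\cdot\leb([0,1]\smallsetminus\+C)<\infty$. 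This disjointness is the idea missing from your sketch; without it, there is no way to turn the constant-measure sets $V^A_n$ into an $A$-Solovay test.
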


We call $z$ \emph{LR-hard}\footnote{The name LR-hard comes from \emph{LR-reducibility}. We write $A\leq_{LR}B$ \cite{Nies:AM} to mean that every set Martin-L\"of random relative to $B$ is Martin-L\"of random relative to $A$. In this notation, $\emptyset'\leq_{LR} z$ means exactly that $z$ is LR-hard.} if every set that is Martin-L\"of random relative to $z$ is $2$-random (i.e., Martin-L\"of random relative to $\emptyset'$). Kjos-Hanssen, Miller and Solomon \cite{KMS} proved that $z$ is (uniformly) almost everywhere dominating if and only if $z$ is LR-hard. Intuitively, such a real is ``nearly'' Turing above $\emptyset'$. For example, $z$ is \emph{superhigh} ($z'\geq_{tt}\emptyset''$)  by Simpson \cite{Simpson:07}.

Lemma~\ref{lem:fast} can also be used to reprove the  recent result of Bienvenu, Greenberg, \Kuc, Nies and Turetsky already discussed in Section~\ref{subsec:discussion} above. 
\begin{theorem}[\cite{Bienvenu.Greenberg.ea:OWpreprint}] \label{thm:ML-rd_nondensity}
If a Martin-L\"of random real $z$ is  not a density-one point, then~$z$ computes every $K$-trivial.
\end{theorem}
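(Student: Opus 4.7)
The plan is to combine Lemma~\ref{lem:fast} with a domination-type characterization of $K$-triviality. Fix an arbitrary $K$-trivial set $A$; we want to show $A \leq_T z$.

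The first ingredient is that every $K$-trivial is low for ML-random (one of the foundational equivalent characterizations, see \cite{Nies2009}). Hence the Martin-L\"of randomness of $z$ lifts to Martin-L\"of randomness relative to $A$. With this in hand, we apply Lemma~\ref{lem:fast} \emph{with oracle $A$}: the non-density-one witness produces a $z$-computable function $f$ that dominates every $A$-computable function.

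The second ingredient is the cost-function machinery for $K$-trivials. For our fixed $K$-trivial $A$, one produces an $A$-computable ``settling-time'' function $m$ attached to a nice $\Delta^0_2$ approximation $(A_s)$ of $A$ such that: whenever $y$ is ML-random relative to $A$ and $y$ computes a function that dominates $m$, then $A \leq_T y$. Roughly, $y$ then has enough information to wait out the last change of the approximation on every initial segment of $A$; the randomness of $y$ relative to $A$ rules out the pathological ``true modulus'' being too far above $m$ (via a standard Solovay-test argument). Applying this with $y = z$ and our function $f$ (which dominates $m$ because $m$ is $A$-computable) yields $A \leq_T z$. Since $A$ was an arbitrary $K$-trivial, $z$ computes every $K$-trivial.

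The main obstacle is the second ingredient: the precise statement and proof that a $K$-trivial $A$ can be computed from any $y$ that is both ML-random relative to $A$ and computes a sufficiently fast-growing (relative to an $A$-computable modulus) function. This is where the real content lies, and it relies on the cost function / golden-run technology developed for $K$-triviality. The first ingredient is a one-line invocation of Lemma~\ref{lem:fast} once one notes that lowness for random of $K$-trivials relativizes ML-randomness of $z$ to $A$; the rest of the argument is simply assembling these two pieces.
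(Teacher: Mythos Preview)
Your overall strategy matches the paper's: use that $K$-trivials are low for random so that $z$ is ML-random relative to $A$, apply Lemma~\ref{lem:fast} to obtain a $z$-computable $f$ dominating every $A$-computable function, and then argue that such domination yields $A \leq_T z$.

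The difference lies entirely in your ``second ingredient.'' You propose to work with an arbitrary $K$-trivial $A$ directly and invoke cost-function/golden-run machinery to produce an $A$-computable modulus $m$ for some $\Delta^0_2$ approximation of $A$ with the property that any $y$ which is ML-random relative to $A$ and dominates $m$ must compute $A$. This is not a standard off-the-shelf result, and you explicitly leave it as a black box (``this is where the real content lies''); as stated it is a genuine gap. The paper sidesteps the issue by first reducing to the case of a \emph{c.e.} $K$-trivial, using Nies's theorem that every $K$-trivial is Turing below a c.e.\ $K$-trivial. For a c.e.\ set $A$ the settling-time function $g(n) = \min\{s : A_s \uhr{n} = A \uhr{n}\}$ is $A$-computable, and it is elementary that \emph{any} function dominating $g$ computes $A$ --- no randomness hypothesis on $y$, no cost functions, no golden run. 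So the paper's route turns your hardest step into a one-line remark at the cost of a single citation; you should replace your second ingredient with this reduction.
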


To see this, first assume that  $A$ is a c.e.\ $K$-trivial set. Then $A$ computes a function $g$ (its settling-time function) such that any function dominating $g$ computes~$A$. Since $A$ is $K$-trivial and therefore $\mathrm{Low}(\MLR)$, $z$ is   Martin-L\"of random relative to $A$ and, by   Lemma~\ref{lem:fast},   $z$ computes $A$.   For the general case we use the fact that every $K$-trivial set is computed by a c.e.\ $K$-trivial~\cite{Nies:AM}.

\begin{proof}[Proof of Lemma~\ref{lem:fast}]
Let $\eps<1$ and let $\+C\subseteq[0,1]$ be an effectively closed class containing $z$ such that $\varrho(\sC|z)<\eps$. We may assume, by adding suitable dyadic rationals to $\+C$, that there is a computable prefix-free sequence of strings $(\sigma_n)$ such that $\+C = [0,1]\smallsetminus\bigcup_n\Cyl{\sigma_n}$.

Let $\+C_{s,t} = [0,1]\smallsetminus\bigcup_{s\leq m<t} \Cyl{\sigma_m}$. Define $g(s)$ to be the least $t>s$ such that there is an interval $I$ containing $z$ for which $\leb(\+C_{s,t}|I)<\eps$. Note that $g$ is total, $z$-computable, and non-decreasing. Define $f\leq_T z$ by $f(n)=g^{\circ n}(0)$. In other words, let $f(0)=g(0)$ and, for all $n\in\N$, let $f(n+1)=g(f(n))$.

We will show that $f$ satisfies the lemma. To see this, assume that there is an $A$-computable function $h$ that is not dominated by $f$. We will use $h$ to build a Solovay test relative to $A$ that captures $z$. There are two cases.

\emph{Case $1$.} $h$ dominates $f$. We may assume, in this case, that $(\forall n)\; h(n)\geq f(n)$. Note that $(\forall n)\; f(n)\geq g(n)$. This is a property that we could have built into $f$ explicitly, but it actually follows from the definition. It is true for $n=0$. If it holds for $n$, then $f(n)\geq g(n)\geq n+1$, so $f(n+1)=g(f(n))\geq g(n+1)$. Therefore, $(\forall n)\; h(n)\geq g(n)$.

Define $k\leq_T A$ by $k(n)=h^{\circ n}(0)$ and, for all $n$, let
\[
S_n=\{x\mid\exists \text{ interval } I\colon x\in I \text{ and } \leb(\+C_{k(n),k(n+1)}|I)<\eps\},
\]
Note that $(S_n)_{n\in\N}$ is a uniformly $A$-c.e.\ sequence of open classes. By part (2) of Lemma~\ref{lem:bounds},
\[
\leb(S_n)\leq 2\frac{1-\leb\left(\+C_{k(n),k(n+1)}\right)}{1-\eps} = \delta\leb\Bigg(\bigcup_{k(n)\leq m<k(n+1)} \Cyl{\sigma_m}\Bigg),
\]
where $\displaystyle\delta = \frac{2}{1-\eps}$. Therefore,
\[
\sum_{n\in\N}\leb(S_n)\leq \delta\sum_{n\in\N}\leb\Bigg(\bigcup_{k(n)\leq m<k(n+1)} \Cyl{\sigma_m}\Bigg) = \delta\leb\Bigg(\bigcup_{m\in\N} \Cyl{\sigma_m}\Bigg)\leq \delta<\infty,
\]
where the equality follows from the fact that $(\sigma_n)$ is prefix-free. This shows that $(S_n)_{n\in\N}$ is an $A$-computable Solovay test.

We claim that  this test captures $z$. For every $n$, we have $k(n+1)=h(k(n))\geq g(k(n))$, so $\+C_{k(n),k(n+1)}\subseteq\+C_{k(n),g(k(n))}$. Thus there is an interval $I$ containing $z$ for which $\leb(\+C_{k(n),k(n+1)}|I)\leq \leb(\+C_{k(n),g(k(n))}|I)<\eps$, so $z\in S_n$.

\medskip
\emph{Case $2$.} $h$ does not dominate $f$. For all $n$, let
\[
S_n=\{x\mid\exists \text{ interval } I\colon x\in I \text{ and } \leb(\+C_{h(n),h(n+1)}|I)<\eps\}.
\]
As in the previous case, the sequence $(S_n)_{n\in\N}$ forms a Solovay test computable from $A$. We must show that it captures $z$. By our assumption, there are infinitely many $n$ such that $h(n)\leq f(n)$ and $h(n+1)\geq f(n+1)$. Fix such an $n$ and note that $h(n+1)\geq f(n+1) = g(f(n))\geq g(h(n))$. Therefore, there is an interval $I$ containing $z$ for which $\leb(\+C_{h(n),h(n+1)}|I)\leq \leb(\+C_{h(n),g(h(n))}|I)<\eps$, so $z\in S_n$. This is true for infinitely many $n$, so $z$ is not Martin-L\"of random relative to $A$.
\end{proof}

\subsection{Non-porosity points}

We now turn to the proof that difference randoms are non-porosity points. As we said above, this fact is an important part of the proofs of Theorems~\ref{thm:2} and~\ref{thm:1}. By the result of Day and Miller \cite{Day.Miller:nd}, there is a difference random that is not a density-one point. Thus, non-porosity is the strongest condition we can hope for. 

\begin{lemma}\label{lem:porous}
Let $\+C \sub [0,1]$ be an effectively closed class. If $z \in \+C$ is difference random, then $\+C$ is not porous at $z$.
\end{lemma}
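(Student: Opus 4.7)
The plan is to argue by contrapositive. Suppose $\+C$ is porous at $z \in \+C$ via some rational $\eps > 0$; I would build a difference test $((U_n), \+C)$ capturing $z$, contradicting the difference randomness of $z$.

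The central gadget is the \emph{porosity extension}: for each rational basic open interval $J = (c,d)$ enumerated as a subset of $[0,1] \setminus \+C$, set $E(J) = (d - |J|/\eps,\ c + |J|/\eps)$, an open interval of length $|J|(2/\eps - 1)$ that contains $J$. A direct computation shows that $x \in E(J)$ iff some $\beta > 0$ satisfies $J \subseteq (x-\beta, x+\beta)$ and $|J| \ge \eps\beta$. This yields two key monotonicity facts: $E(J) \subseteq E(J')$ whenever $J \subseteq J'$, and in particular $E(J) \subseteq E(H)$ whenever $H$ is the maximal hole of $\+C$ containing $J$. After shrinking $\eps$ by a factor of $2$ to ensure that every porosity witness for $z$ has a rational basic sub-interval of comparable length, porosity supplies rational basic intervals $J^{(n)} \subseteq [0,1] \setminus \+C$ with $|J^{(n)}| \to 0$ and $z \in E(J^{(n)})$ for every $n$.

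The Solovay-type measure bound drives the test: for each maximal hole $H$, $E(H) \cap \+C \subseteq E(H) \setminus H$, so $\lambda(E(H) \cap \+C) \le 2|H|(1-\eps)/\eps$; summing over the pairwise-disjoint maximal holes $(H_m)$ and using $\sum_m |H_m| \le 1$ gives $\sum_m \lambda(E(H_m) \cap \+C) \le 2(1-\eps)/\eps$, a computable constant depending only on $\eps$. I would then group extensions by the dyadic scale of the associated hole---the $n$th group collecting $E(H)$'s with $|H| \in [2^{-n-1}, 2^{-n})$---to assemble a uniformly c.e.\ sequence $(T_n)$ of open sets with $\sum_n \lambda(T_n \cap \+C) \le 2(1-\eps)/\eps$, capturing $z$ at infinitely many indices (one per porosity-witness scale). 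A standard tail reorganization $U_k := \bigcup_{n \ge f(k)} T_n$ with $f$ chosen computably from the Solovay bound then yields a genuine difference test $((U_k), \+C)$ with $\lambda(U_k \cap \+C) \le 2^{-k}$ still capturing $z$.

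The main technical obstacle is that the maximal holes of $\+C$ are only left-c.e.\ in length and not uniformly c.e.\ as sets; so $T_n$ cannot literally be built from ``maximal holes of a given size range.'' My proposed workaround is to work stage by stage with the components of the complement-approximation $W_s = \bigcup_{k \le s} J_k$, enumerating $E(C)$ into $T_n$ only at ``doubling'' growth events within each component's lineage. The inclusion $E(C) \subseteq E(C')$ for $C \subseteq C'$ combined with this doubling bookkeeping keeps the total enumerated measure within a computable constant multiple of $\sum_m |H_m| \le 1$, while the porosity of $\+C$ at $z$ guarantees that at each sufficiently small scale, some enumerated component-extension contains $z$.
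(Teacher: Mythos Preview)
Your approach---a one-shot Solovay-style difference test built from porosity extensions $E(J)$---is genuinely different from the paper's. The paper argues by iteration: it recursively refines an antichain of strings, each level passing to the minimal extensions witnessing a nearby hole, and shows that the $\+C$-measure drops by a fixed factor $(1-2^{-c-2})$ per level. Your idea is more direct, and it can be made to work, but the implementation you sketch has a real gap.

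The failure is at the step where you group the $E(H)$'s by the size of the \emph{maximal} hole $H$ and claim that $z$ lies in infinitely many $T_n$. Take $z$ to be an endpoint of a maximal hole $H_0$ of $\+C$ (for example $\+C=[0,\Omega]$ and $z=\Omega$). Then every porosity witness at every scale $\beta$ is a subinterval of the single hole $H_0$, so your grouping places $z$ in only the one $T_{n_0}$ corresponding to $|H_0|$; the test does not capture $z$ infinitely often. Your doubling workaround does not repair this: if $H_0$ happens to be enumerated as a single basic interval there is only one growth event, and in general the bookkeeping under component merges is not spelled out well enough to see that the measure bound survives. A clean fix is to abandon the max-hole grouping and use instead the canonical prefix-free dyadic decomposition $[0,1]\setminus\+C=\bigcup_k\Cyl{\sigma_k}$: set $T_k=E(\Cyl{\sigma_k})$ (with a suitably shrunk parameter $\eps'$), note that $\sum_k\lambda(T_k\cap\+C)\le\sum_k\lambda\bigl(E(\Cyl{\sigma_k})\setminus\Cyl{\sigma_k}\bigr)\le\tfrac{2(1-\eps')}{\eps'}\sum_k 2^{-|\sigma_k|}<\infty$, and observe that the $\Cyl{\sigma_k}$ containing a dyadic subinterval of a porosity witness at scale $\beta$ has an endpoint strictly between $z$ and $z\pm\beta$ (since $z$, being random, is not dyadic), so these $\sigma_k$ must vary as $\beta\to 0$. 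One further wrinkle: your ``tail reorganization'' $U_k=\bigcup_{n\ge f(k)}T_n$ needs computable tail bounds, which a bare Solovay bound does not provide; use instead $U_m=\{x:|\{k:x\in T_k\}|\ge m\}$ together with Markov's inequality to get $\lambda(U_m\cap\+C)\le C_0/m$.
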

\begin{proof}    
%

Suppose there is  $c \in \NN$ such that    $\+C$  is porous at $z$ via $\tp{-c+2}$. We will show that $z$ is not difference random.

For each string $\sss$ consider the set of minimal  ``porous'' extensions at stage $t$,
\begin{equation}\label{eqn_def_Nt}  N_t(\sss) = \left\{ \rho \succeq \sss \,\middle|  \,
\ex \tau \succeq \sss \left[\begin{array}{c}
   |\tau| = |\rho| \lland  |0.\tau - 0. \rho | \le \tp { - |\tau|  + c} \lland  \\  \Cyl \tau \cap \+C_t = \ES  \lland
 \rho \text{ is minimal with this property}
\end{array}\right]
\right\}. \end{equation}
%
%
In this proof, we say that a string $\sss$ meets $\+C$ if $\Cyl \sss \cap \+C \neq \ES$. We claim that
\begin{equation} \label{eqn:Ntbound} ~ ~ ~ ~ ~ ~ ~ ~ ~ ~  ~ ~ ~ ~ ~ ~  ~ ~ ~ ~ ~ ~ \sum_{\begin{subarray}{c}\rho \in N_t(\sss) \\ \rho \text{ meets } \+C\end{subarray}} \tp{-|\rho|} \le (1- \tp{-c-2}) \tp{-\sssl}. \end{equation}
To see this, let $R$ be the set of strings $\rho$ in (\ref{eqn:Ntbound}). Let $V$ be the set of    prefix-minimal strings   that occur as witnesses $\tau$ in~(\ref{eqn:Ntbound}). Then the open sets generated by $R$ and by $V$ are disjoint. Let $r$ and $v$ denote their measures, respectively. Since $R,V \sub \Cyl \sss$, we have $r+v \le \tp{-\sssl}$.  By definition of $N_t(\sss)$, for each $\rho \in R$ there is $\tau \in V$ such that  $|0.\tau - 0. \rho | \le \tp { - |\tau|  + c}$. This implies $r \le \tp{c+1}v$. The two inequalities together imply~(\ref{eqn:Ntbound})  because $r \le \tp{c+1}(\tp{-\sssl}-r)$ implies $r \le [1-1/(\tp{c+1}+1)]\tp{-\sssl}$.

Note that, by definition, even the \qt{holes} $\tau$ can be contained in the sets $N_t(\sss)$. This will be essential for the proof of the first of the following two claims. At each stage $t$ of the construction we define recursively a sequence of anti-chains  as follows.
\[
B_{0,t} = \{\estring\},\textnormal{ and for } n>0\colon B_{n,t} = \bigcup \{N_t(\sss)  \mid  \sss \in B_{n-1,t}\}.
\]

\noindent {\em Claim.}   If a string $\rho$  is in $B_{n,t}$ then it has a prefix $\rho'$ in $B_{n, t+1}$.

\noindent 	This is clear for $n=0$. Suppose  inductively that  it holds for $n-1$.  Suppose further that $\rho $ is in $B_{n,t}$ via a string $\sss \in B_{n-1,t}$. By the inductive hypothesis, there is a $\sss' \in B_{n-1, t+1}$ such that  $\sss' \preceq \sss$. Since $\rho \in N_{t}(\sss)$, $\rho$  is a viable extension of $\sss'$ at stage $t+1$ in the definition of $N_{t+1}(\sss')$,  except maybe for the  minimality. Thus there is $\rho' \preceq \rho $ in $N_{t+1}(\sss')$. \hfill $\Diamond$

\vsp

\noindent {\em Claim.}  For each $n,t$, we have
$ \sum \{ \tp{- |\rho| } \mid \rho \in  B _{n,t} \lland  \rho \ \text{\rm meets} \  \+C\} \le ( 1- \tp{-c-2})^n. $

\noindent This is again clear for $n=0$.  Suppose  inductively it holds for $n-1$.  Then, by (\ref{eqn:Ntbound}),


$$
\displaystyle\sum_{\begin{subarray}{c} \rho \in  B _{n,t} \\ \rho \text{ meets } \+C\end{subarray}} \tp{- |\rho| }   =  \displaystyle\sum_{\begin{subarray}{c}  \sss \in B_{n-1,t} \\ \sss \text{ meets } \+C\end{subarray}} \displaystyle\sum_{\begin{subarray}{c} \rho \in N_t(\sss) \\ \rho \text{ meets } \+C\end{subarray}} \tp{-|\rho|}
 \le   \displaystyle\sum_{\begin{subarray}{c} \sss \in B_{n-1,t} \\ \sss \text{ meets } \+C\end{subarray}} \tp{-\sssl } ( 1- \tp{-c-2})    \le   ( 1- \tp{-c-2})^n.$$
This establishes the claim.\hfill $\Diamond$

\vsp

Now let $U_n = \bigcup_t \Cyl {B_{n,t}}$. Clearly the sequence $(U_n) \sN n$ is uniformly effectively open. By the first claim, $U_n  = \bigcup_t  \Cyl {B_{n,t}}$ is a union over a nested sequence of classes, so the second claim implies that $\leb ( \+C \cap U_n) \le ( 1- \tp{-c-2})^n$.

\vsp

\noindent {\em Claim.} For all $n$, $z \in U_n$.

\noindent We show this by induction on $n$. We need to argue that for every initial segment $\sigma$ of $z$ we can find an extension $\sigma \preceq \rho \prec z$ such that $\rho$ is as in (\ref{eqn_def_Nt}) and there is a witness $\tau$ for $\rho$ as in (\ref{eqn_def_Nt}). Since $\+C$  is porous at $z$ via $\tp{-c+2}$,  along the binary expansion of~$z$ we will find infinitely many holes of the needed relative size that are good candidates for $\tau$. If we can argue that these holes can also be chosen as extensions of $\sigma$ then we have shown  that $\tau$ exists.

Clearly $z \in U_0$. If $n>0$, suppose inductively that there is a $\sss \prec z$ such that $\sss \in \bigcup_t B_{n-1,t}$. Since $z$ is random there are $\sigma^\prime,\eta$ such that $\sss \preceq \sigma^\prime1^{(c+1)}0^{(c+1)} = \eta \prec z$. We will choose
an extension of $\eta$ as our $\rho$, and will argue that then a $\tau$ witnessing this $\rho$ must be an extension of $ \sigma^\prime$ and therefore of $\sigma$. Because of our construction, $(0.\rho,0.\rho+2^{-|\rho|}) \subseteq (0.\eta,0.\eta+2^{-|\eta|}) \subseteq (0. \sigma^\prime,0. \sigma^\prime+2^{-| \sigma^\prime|})$.
There are two cases:

{\em Case $1$.} $0.\tau < 0.\rho$. By construction we have $0.\eta - 0. \sigma^\prime = 2^{-| \sigma^\prime|} \cdot (1 - 2^{-(c+1)})$. This implies
\[\begin{array}{rcl}
0.\tau - 0. \sigma^\prime & \geq & 0.\rho - 2^{-|\rho| + c} - 0.\sigma^\prime \\
              & \geq & 0.\eta  - 0.\sigma^\prime - 2^{-|\rho| + c}\\
              & \geq & 2^{-|\sigma^\prime|} \cdot (1 - 2^{-(c+1)}) - 2^{-|\eta| + c}\\
              & = & 2^{-|\sigma^\prime|} \cdot (1 - 2^{-(c+1)}) - 2^{-(|\sigma^\prime|+2c+2) + c}\\
              & \geq & 0.\\
\end{array}\]

{\em Case $2$.} $0.\rho < 0.\tau$. By construction we have $(0.\sigma^\prime + 2^{-|\sigma^\prime|}) - (0.\eta + 2^{-|\eta|}) = 2^{-|\sigma^\prime|} \cdot (2^{-(c+1)}-2^{-(2c+2)})$. This implies
\[\begin{array}{rcl}
(0.\sigma^\prime + 2^{-|\sigma^\prime|}) -   (0.\tau + 2^{-|\tau|}) & \geq & (0.\sigma^\prime + 2^{-|\sigma^\prime|}) - (0.\rho + 2^{-|\rho|+c}+ 2^{-|\rho|})\\
   & \geq & (0.\sigma^\prime + 2^{-|\sigma^\prime|}) - (0.\eta + 2^{-|\eta|}) -  2^{-|\rho|+c}\\
   & \geq & 2^{-|\sigma^\prime|} \cdot (2^{-(c+1)}-2^{-(2c+2)}) - 2^{-|\eta| + c}\\
   & = & 2^{-|\sigma^\prime|} \cdot (2^{-(c+1)}-2^{-(2c+2)}) - 2^{-(|\sigma^\prime|+2c+2) + c}\\
              & \geq & 0.\\
\end{array}\]

\noindent The two cases together imply $(0.\tau,0.\tau+2^{-|\tau|}) \subseteq (0. \sigma^\prime,0. \sigma^\prime+2^{-| \sigma^\prime|})$, as needed. This completes the claim.
%
%
Now take  a computable subsequence $(U_{g(n)}) \sN n$ such that $\leb ( \+C \cap U_{g(n)}) \le \tp{-n}$ to obtain a difference test that captures $z$.
\end{proof}

Theorem~\ref{thm:density_Turing} together with Lemma \ref{lem:porous} gives the following corollary. To the best of our knowledge, there exists no direct proof of this fact.

\begin{corollary}
Let $z$ be ML-random. If every effectively closed class $\+C$ with $z\in\+C$ satisfies  $\varrho(\sC|z) > 0$, then no effectively closed class $\+D$ with $z\in \+D$ is porous at $z$.
\end{corollary}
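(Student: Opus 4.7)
The plan is to chain together the two results cited in the statement, using the Franklin--Ng characterization of difference randomness as the bridge. First I would invoke Theorem~\ref{thm:density_Turing}: since $z$ is ML-random and has positive lower density in every effectively closed class containing it, $z$ is a positive density point, and hence $z \not\geq_T \emptyset'$.

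Next I would recall that, by the Franklin--Ng characterization mentioned in Section~\ref{ss:rdness notions}, a real is difference random exactly when it is Martin-L\"of random and Turing incomplete. Both conditions hold for $z$, so $z$ is difference random.

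Finally, for any effectively closed class $\+D$ with $z \in \+D$, Lemma~\ref{lem:porous} applies directly and yields that $\+D$ is not porous at $z$, as required. No genuine obstacle arises; the content of the corollary is just to record this consequence of the fact that the two characterizations (positive density and non-porosity for difference randoms) meet at the level of Turing-incomplete ML-randoms, which is why the authors note that no direct proof seems to be known.
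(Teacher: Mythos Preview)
Your proposal is correct and follows essentially the same approach as the paper: the authors simply state that the corollary follows from Theorem~\ref{thm:density_Turing} together with Lemma~\ref{lem:porous}, which is exactly the chain you spell out (positive density point $\Rightarrow$ $z\not\ge_T\emptyset'$ $\Rightarrow$ difference random via Franklin--Ng $\Rightarrow$ non-porosity). Your remark about the absence of a direct proof also matches the paper's own comment.
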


\section{Preliminaries: computable analysis}
\label{s:prelims_analysis}

We recall the definitions of computable and Markov computable functions on the real numbers. For more detail see for instance~\cite{Brattka.Hertling.ea:08,Weihrauch2000}.

\subsection*{Computable real-valued functions}

\begin{definition} \label{def:compreal}
A sequence of rational numbers $(q_n)_{n\in\mathbb{N}}$ is called a \emph{Cauchy name} for $z\in\R$ if   $|z-q_n|\leq 2^{-n}$ for all $n$.  We sometimes write $(z)_n$ for $q_n$ if a Cauchy name for $z$ is understood. 

A real number $z$ is \emph{computable} if it has a computable Cauchy name. \end{definition} 

Computability of a real  is equivalent to computability of the binary expansion. However,  one cannot \emph{uniformly} compute a binary expansion of a real from a Cauchy name. 
We denote the set of computable reals by~$\R_c$.

A partial function $f\colon\subseteq\R\to\R$ is \emph{computable} if there is a computable functional that takes every Cauchy name for $z\in\dom(f)$ to a Cauchy name for $f(z)$. In other words, there is an effective way to approximate $f(z)$ given approximations to $z$. It is not hard to see that a computable function is necessarily continuous on its domain. The use of Cauchy names instead of binary expansions is important. For example, there is no computable functional that takes a binary expansion of $z$ to a binary expansion of $3z$ for every $z\in\R$, but we clearly want $z\mapsto 3z$ to be a computable function. An equivalent and elegant way to define computability of partial functions from $\R$ to $\R$ is as follows: a function $f:D \rightarrow \R$ (with $D \subseteq \R$) is computable if and only if for every effectively open set of reals $\mathcal{U}$, one can compute, uniformly in an index for $\mathcal{U}$, an index for an effectively open set $\mathcal{V}$ such that $f^{-1}(\mathcal{U})=D \cap \mathcal{V}$. Note that for  computability of a function~$f$
one only needs to ensure the above for~$\mathcal{U}$ ranging over rational open intervals.

\subsection*{Markov computable real-valued functions}

Another approach to defining computability for functions on the real numbers is to consider only their action on computable reals. Let $(\varphi_e)_{e\in\N}$ be an effective numbering of the partial computable functions. An \emph{index name} for a computable real $z\in\R_c$ is an index for a computable Cauchy name for $z$. A partial function $f\colon \subseteq\R_c\to\R_c$ is \emph{Markov computable} if there is a uniform way to compute an index  name for $f(z)$ from an index name for $z\in\dom(f)$. More precisely, $f$ is Markov computable if there is a partial computable function $\nu\colon\N\to\N$ such that for all $z\in\dom(f)$, if $e$ is an index name for~$z$, then $\nu(e)$ is defined and is an index  name for~$f(z)$. Unless otherwise specified,  we will assume Markov computable functions are defined  on all of  $\R_c$; for definiteness, we will often say that such an  $f$ is total on $\R_c$.

Computable functions map computable reals to computable reals. So,  if $f$ is a computable function with domain containing~$\R_c$, then $f $ is a Markov computable function and total on $\R_c$. The converse is a special case of a theorem of Ce{\u\i}tin~\cite{Ce:59}: if $f\colon\R_c\to\R_c$ is a total Markov computable function, then there is a (possibly partial) computable function $\hat{f}\colon\subseteq\R\to\R$ such that $f=\hat{f}\uhr {\R_c}$. This implies that total Markov computable functions are continuous on $\R_c$, which is not obvious. Note that it is not always  possible to make $\hat{f}$ total. For example, in Theorem~\ref{thm:Markov} below we give an example of a Markov computable uniformly continuous function that is not obtained as the restriction to $\R_c$ of a total computable function. So the effective analysis of the Denjoy-Young-Saks theorem will be quite different for total computable functions and total Markov computable functions. It is also not true that every partial Markov computable function can be extended to a partial computable function.


Given a (Markov) computable function~$f$ and a real $z\in\dom(f)$, we sometimes use the notation $f(z)_n$ to denote the $n$th element in a Cauchy name for $f(z)$. Note that $|f(z)_n-f(z)|\leq 2^{-n}$.

\subsection*{An  extension theorem for nondecreasing functions}

Recall that $\sC \sub [0,1]$ is an \emph{effectively closed} class if  the complement of $\sC$ is the  union of  an effective   sequence of open intervals  $(a_i, b_i) \sN i$ with rational endpoints. We let $\sC_t = [0,1] \setminus \bigcup_{i \le t} (a_i, b_i)$; this is the approximation of $\sC$ at stage $t$. 

The Tietze extension theorem states that if $X$ is a sufficiently nice topological space (for example, a metric space), $\+C\subseteq X$ is closed, and $h\colon \+C\to\R$ is continuous, then $h$ can be extended to a continuous function on $X$. Weihrauch~\cite{MR1893086} proved an effective version of the Tietze extension theorem for computable metric spaces. The main result of this section is an effectivization of a variant of the Tietze extension theorem. In Theorem~\ref{thm:extension}, we prove that if $h\colon\+C\to\R$ is a nondecreasing computable function on an effectively closed class $\+C\subseteq[0,1]$, then it can be extended to a nondecreasing computable function on $[0,1]$.


%
%

\begin{lemma}\label{approx_lemma}
Let $p\colon [0,1]\to\R$ be a total computable function. Then the following functions are computable: \bc   $a, b \mapsto \sup_{a \le x \le b}p(x)$ and $a,b \mapsto \inf_{a \le x \le b}p(x)$,   \ec where  $0 \le a \le b \le 1$ . 
\end{lemma}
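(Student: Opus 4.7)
The plan is to exploit the fact that a total computable function on the compact interval $[0,1]$ is uniformly continuous with a computable modulus of uniform continuity. Using effective compactness of $[0,1]$ (which follows from the equivalent definition of computability that pulls back effectively open sets to effectively open sets), one produces a computable function $\omega\colon\N\to\N$ such that $|x-y|\le 2^{-\omega(n)}$ implies $|p(x)-p(y)|\le 2^{-n}$ for all $x,y\in[0,1]$. I would prove this first as a short preliminary, since it is the only nontrivial ingredient.

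Given Cauchy names for $a,b$ with $0\le a\le b\le 1$ and a target precision $2^{-n}$, I would first extract rational over-approximations $\tilde a$ and $\tilde b$ satisfying $\tilde a \le a$, $\tilde b \ge b$, $a - \tilde a \le 2^{-\omega(n+2)-1}$, and $\tilde b - b \le 2^{-\omega(n+2)-1}$ directly from the Cauchy names. By the modulus of uniform continuity, the sup over the slightly enlarged rational interval $[\tilde a,\tilde b]$ exceeds the true sup over $[a,b]$ by at most $2^{-n-2}$, while the reverse inequality is trivial.

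Next, I would compute $\sup_{\tilde a\le x\le \tilde b}p(x)$ by evaluating $p$ at a rational grid $\tilde a = x_0 < x_1 < \cdots < x_m = \tilde b$ of mesh at most $2^{-\omega(n+2)}$ spanning $[\tilde a,\tilde b]$, computing each $p(x_i)$ to precision $2^{-n-2}$ (using that $p$ is computable), and returning the maximum of these rational approximations. Uniform continuity guarantees that the continuous sup over $[\tilde a,\tilde b]$ differs from $\max_i p(x_i)$ by at most $2^{-n-2}$, and the rational approximations introduce another error of at most $2^{-n-2}$. Combining all three error bounds gives an approximation of $\sup_{a\le x\le b}p(x)$ to precision $2^{-n}$, uniformly computable in Cauchy names for $a$ and $b$. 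The infimum is handled symmetrically, or by applying the sup result to $-p$.

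There is no substantive obstacle here beyond bookkeeping the error terms; the only conceptual input is the computable modulus of uniform continuity, and once that is in hand the argument is the standard discretization scheme for suprema of continuous functions on compact intervals.
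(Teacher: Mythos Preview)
Your argument is correct. The only small bookkeeping point you should make explicit is that the enlarged endpoints $\tilde a,\tilde b$ must be clipped to $[0,1]$ so that $p$ is defined on the grid; since $a\ge 0$ and $b\le 1$, replacing $\tilde a$ by $\max(\tilde a,0)$ and $\tilde b$ by $\min(\tilde b,1)$ preserves $\tilde a\le a$ and $\tilde b\ge b$ and all your error estimates.

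Your route, however, is genuinely different from the paper's. The paper does not use a modulus of uniform continuity or a grid discretization at all. Instead it argues that $S=\sup_{[a,b]}p$ is simultaneously upper and lower semicomputable relative to Cauchy names for $a,b$: for rational $u$ one has $S<u$ iff the effectively closed set $[a,b]\cap p^{-1}([u,\infty))$ is empty (detectable by compactness), and $S>u$ iff the effectively open set $[a,b]\cap p^{-1}((u,\infty))$ is nonempty. Being both upper and lower semicomputable yields computability. Your approach is more explicitly algorithmic and yields a direct Cauchy name for $S$, at the cost of first establishing the computable modulus of uniform continuity. The paper's approach is more abstract and dovetails with the semicomputability framework used immediately afterward in the proof of the extension theorem, where only one-sided approximability is available. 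Both ultimately rest on effective compactness of $[0,1]$, just invoked in different guises.
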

\begin{proof}   A real  number is  called lower semicomputable (resp.\ upper semicomputable) if it is the supremum (resp.\ infimum) of a computable sequence of rational numbers.  We prove the statement for the supremum; the proof for the infimum is analogous.  The reals $a, b$ are given by Cauchy names.    Let $S=\sup_{a \le x \le b}p(x)$. For a rational~$u$, we have
\[
S < u \Leftrightarrow [a,b] \cap p^{-1}([u,\infty)) = \emptyset.
\]
Note that the set  $[a,b] \cap p^{-1}([u,\infty))$ is effectively closed uniformly in the names for $a,b$. By compactness, if this set is   empty, this will become apparent at some stage. Thus, $S$ is an upper semicomputable real relative to the Cauchy names for $a,b$. 

For any rational $u$, we also have
\[
S > u \Leftrightarrow p^{-1}((u,\infty)) \cap [a,b] \not= \emptyset.
\] 
Since $p$ is continuous, there is a $\Sigma^0_1$ predicate, involving $u$ and Cauchy names for $a,b$, that holds if and only if $p^{-1}((u,\infty)) \cap [a,b] \neq \ES$.  Thus $S$ is also a lower semicomputable real relative to Cauchy names for $a,b$.  This establishes the computability of the function in question.
\end{proof}

A function $f\colon\subseteq\R\to\R$ is  called \emph{lower }[\emph{upper}]\emph{ semicomputable}   if there is a computable functional which, when given a Cauchy name for a real~$x$ as input,  enumerates a set of rationals whose supremum [infimum]  is $f(x)$.

\begin{lemma}\label{approx_lemma_unif}
Let $\+C$ be an effectively closed class. Let $h\colon \subseteq[0,1]\to\R$ be a computable function with domain containing $\+C$. Define   functions~$f$ and $g$   by
\[
f(a,b) = \inf  \{h(x)\colon \, x \in \+C \cap [a,b]\} ~ ~ ~ \text{ and } ~ ~ ~ g(a,b) = \sup\{h(x)\colon \, x \in \+C \cap [a,b]\},
\]
where $0 \le a \le b \le 1$. Then $f$  is lower semicomputable, and $g$ is upper semicomputable. 
\end{lemma}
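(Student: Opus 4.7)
The plan is to prove lower semicomputability of $f$ by arguing that, uniformly in a rational $u$ and in Cauchy names for $a, b$, the relation $f(a,b) > u$ is \SigmaOne; the argument for $g$ will be entirely symmetric. Recall that a real-valued function of computable real inputs is lower semicomputable precisely when its strict superlevel sets are effectively open uniformly in the rational threshold.

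For the infimum, $f(a,b) > u$ holds iff there exists a rational $v > u$ such that $h(x) > v$ for every $x \in \+C \cap [a,b]$. Since $h$ is computable with $\+C \sub \dom(h)$, the alternative characterization of real computability via preimages of effectively open sets (noted in the preceding subsection) lets us compute from $v$ an effectively open set $\+U_v \sub [0,1]$ such that $h^{-1}((v,\infty)) = \+U_v \cap \dom(h)$. Using $\+C \sub \dom(h)$, the condition reduces to the inclusion $\+C \cap [a,b] \sub \+U_v$, i.e., to the effectively closed class $\+D_v := \+C \setminus \+U_v$ being disjoint from $[a,b]$.

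The remaining task is thus to show that \qt{$\+D \cap [a,b] = \ES$} is \SigmaOne\ uniformly in an index for an effectively closed $\+D \sub [0,1]$ and in Cauchy names for $a,b$. This disjointness is equivalent to $[a,b] \sub \+D^c$; by compactness of $[a,b]$ and effective openness of $\+D^c$, the containment can be witnessed effectively by searching for a stage $s$ at which finitely many basic rational intervals already enumerated into $\+D^c$ cover the slightly enlarged rational interval $[a_s - 2^{-s},\, b_s + 2^{-s}]$ obtained from the Cauchy names. Such a witness must appear whenever $[a,b] \sub \+D^c$ actually holds, since compactness leaves a positive margin around $[a,b]$ still inside the open set $\+D^c$. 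This yields the desired \SigmaOne\ description. The symmetric argument, with $h^{-1}((-\infty,v))$ in place of $h^{-1}((v,\infty))$ and the rational $v < u$, gives upper semicomputability of $g$. The mildly delicate point throughout is simply that $a$ and $b$ need not be rational, but this is handled in the standard way via Cauchy approximations.
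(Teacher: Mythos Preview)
Your proof is correct and follows essentially the same approach as the paper's. Both arguments reduce the relevant semicomputability condition to emptiness of an effectively closed set of the form $\+C \cap [a,b] \cap h^{-1}(\text{closed half-line})$, and then use compactness together with stage-wise approximations of $\+C$, the preimage, and the Cauchy names $(a_s),(b_s)$ to make this a \SigmaOne\ condition. The only cosmetic differences are that the paper treats $g$ first and works directly with the effectively closed preimage $h^{-1}([q,\infty))$, whereas you treat $f$ first, pass through the open-preimage characterization of computability, and introduce an auxiliary rational $v>u$ (which is harmless but not actually needed, since the infimum is attained on the compact set $\+C\cap[a,b]$).
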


\begin{proof}
Let $a,b \in [0,1]$. Let $(a_t)$ and $(b_t)$ be Cauchy names for $a$ and $b$. Since $\sC$ is closed and $h$ continuous, we  have for each rational $q$
\begin{eqnarray*}
g(a,b) < q & \leftrightarrow & \mathcal{C} \cap [a,b] \cap h^{-1}([q,\infty)) = \emptyset \\
 & \leftrightarrow  & \exists t\  \mathcal{C}_t \cap [a_t-2^{-t},b_t+2^{-t}] \cap h^{-1}([q,\infty))_t = \emptyset,
\end{eqnarray*}
where $h^{-1}([q,\infty))_t $ is the approximation at stage~$t$ of the effectively closed class $h^{-1}([q,\infty))$. Since one can decide effectively whether a boolean combination of rational closed intervals is empty, this shows that one can enumerate effectively, relative to  any pair of Cauchy names for $a$ and $b$,  the set of rationals  $q$ such that $g(a,b) < q$. 

The proof that $f$ is lower semicomputable is analogous. 
\end{proof}

While the next lemma appears to be folklore, a published proof is hard to find.
\begin{lemma}\label{lem:lower-smooth}
Suppose that a function $f\colon[0,1] \rightarrow \R$ is lower [upper] semicomputable. Then there exists a total computable function $F\colon [0,1] \times [0,\infty) \rightarrow \R$ such that $F$ is nondecreasing [nonincreasing] in its second argument, and for all~$x$, $\lim_t F(x,t) = f(x)$. 
\end{lemma}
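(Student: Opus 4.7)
The plan is to reduce to a standard representation of lower semicomputable functions and then telescope. A classical result in computable analysis (see, e.g., \cite{Weihrauch2000}) gives a uniformly computable sequence $(f_n)_{n \in \N}$ of total computable functions $[0,1] \to \R$ with $f(x) = \sup_n f_n(x)$; replacing $f_n$ by $\max(f_0,\ldots,f_n)$, I may further arrange $f_n \leq f_{n+1}$ pointwise. (In the applications below via Lemma~\ref{approx_lemma_unif}, this representation can be read off directly from the enumeration witnessing lower semicomputability.)

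Fix the continuous ramp $\phi(s) = \max\{0, \min\{1, s\}\}$, set $\phi_n(t) = \phi(t - n)$, and define
\[
F(x,t) = f_0(x) + \sum_{n=0}^{\infty} \phi_n(t)\bigl(f_{n+1}(x) - f_n(x)\bigr).
\]
Since $\phi_n(t) = 0$ whenever $t \leq n$, only finitely many summands are nonzero at any given $t$: from a Cauchy name for $t$ one effectively produces an integer $N > t$, and then $F(x,t)$ coincides with the finite sum $f_0(x) + \sum_{n=0}^{N-1} \phi_n(t)(f_{n+1}(x) - f_n(x))$. This is a computable combination of the uniformly computable functions $f_n$ and $\phi_n$, so $F$ is total and computable on $[0,1]\times[0,\infty)$. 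Monotonicity in $t$ is immediate from $\phi_n$ being nondecreasing in $t$ together with $f_{n+1}(x) - f_n(x) \geq 0$.

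For the limit, one checks $F(x,t) \leq \sup_n f_n(x) = f(x)$ in general, while for $t \geq N$ the identity $\phi_n(t) = 1$ for $n < N$ gives, by telescoping, $F(x,t) \geq f_0(x) + \sum_{n<N}(f_{n+1}(x) - f_n(x)) = f_N(x)$; thus $\lim_t F(x,t) = f(x)$. The upper semicomputable case is dual: represent $f = \inf_n f_n$ with $f_n \geq f_{n+1}$ uniformly computable, and set $F(x,t) = f_0(x) - \sum_n \phi_n(t)(f_n(x) - f_{n+1}(x))$.

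The main obstacle is the initial representation step. Lower semicomputability is defined here in terms of enumerating rationals from a Cauchy name for $x$, and passing from such an enumeration to a \emph{continuous} (indeed, computable) monotone lower approximation requires some care---one must smooth the naive step-function approximations built from finite stages of the enumeration into continuous ones, for example by attaching a small triangular bump peaked at $q$ to each enumerated pair $(I,q)$ witnessing $f|_I > q$, and then taking running maxima. Once a uniformly computable monotone approximation is available, the ramp construction above sidesteps the discontinuity (and hence noncomputability) that would plague the most naive piecewise-linear interpolation based on $\lfloor t \rfloor$: because $\phi_n$ vanishes on a full neighborhood of every $t \leq n$, we never need to decide on which side of an integer a given $t$ lies.
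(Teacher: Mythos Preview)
Your argument is correct, but it is organized differently from the paper's proof and defers the main work to a citation. The paper builds the approximating functions directly from the definition: for each stage $s$ it takes the maximum over finitely many expressions $r_i\cdot g_{a,b,s}(x)$, where the $r_i$ run over the first $s$ rationals, $(a,b)$ runs over the finitely many intervals so far enumerated into $f^{-1}((r_i,\infty))$, and $g_{a,b,s}$ is a computable continuous function supported on $[a,b]$ that increases to $\chi_{(a,b)}$ as $s\to\infty$. The smoothing and the monotone approach to $f$ happen simultaneously; the passage from integer $s$ to real $t$ is then dismissed as ``linear interpolation.'' You instead front-load a representation $f=\sup_n f_n$ with $(f_n)$ uniformly computable and monotone, and spend your effort on the interpolation, which you carry out via the ramp functions $\phi_n$. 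Your interpolation is really the same linear interpolation---for $t\in[N,N+1]$ your formula collapses to $f_N(x)+(t-N)(f_{N+1}(x)-f_N(x))$---but written so as to avoid computing $\lfloor t\rfloor$; this is a nice touch.

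The point to flag is that the representation step you cite \emph{is} the substance of the lemma under the paper's definition of lower semicomputability (enumerating rationals below $f(x)$ from a Cauchy name for $x$). You acknowledge this in your final paragraph and sketch exactly the kind of bump-and-maximum smoothing the paper carries out in detail, so there is no gap; but be aware that a reader following only the paper's conventions will see your first sentence as assuming what is to be proved, with the justification appearing only at the end. It would read more cleanly to lead with the construction of the $f_n$ (your last paragraph) and then interpolate.
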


\begin{proof}
Let us prove this for lower semicomputable $f$; the case of upper semicomputabile $f$  is analogous. By hypothesis, the set $U_r=f^{-1}((r,\infty))$ is effectively open.  Let  $(r_i)\sN i $ be  a fixed computable listing of the rationals. Let  $\chi_A$ denote  the characteristic function of a set  $A \sub [0,1]$. At    stage~$s$, we approximate $f$ by the function
\[
f_s = \max_{i \leq s}  \, (  r_i \cdot \chi_{U_{r_i}[s]}).
\]
 Each $U_{r_i}[s]$ is the  union of an effectively given  finite  collection $\mathcal A_{i,s}$ of rational open intervals $(a,b)$. Thus $\chi_{U_{r_i}[s]}$ cannot be computable, since it is not even continuous. The idea is then  to further approximate at stage~$s$ each characteristic function $\chi_{(a,b)}$ of the rational open interval $(a,b)$ by the function $g_{a,b,s}$ such that  $g_{a,b,s}(x) =0 $ for $ x \not \in [a,b]$, and    for  $x\in [a,b]$  
\[
g_{a,b,s}(x) =  \frac{s \cdot \min(|x-a|,|x-b|)}{1+s \cdot \min(|x-a|,|x-b|)}.
\] 
 This function is computable uniformly in~$s$. For each real $x$  its value is  nondecreasing in~$s$ and  tends to $\chi_{(a,b)}$. Thus  we can set
\[
F(x,s) = \max  \{r_i \cdot g_{a,b,s}(x) \mid  i \le s \lland (a,b) \in \mathcal A_{i,s}\}
\]
 with integer-valued second parameter. The second parameter can then be made real-valued by linear interpolation. 
\end{proof}

\begin{theorem}\label{thm:extension}
Let $h\colon\sub [0,1]\to\R$ be a computable function that is defined and non-decreasing on an effectively closed class $\+C$. Then $h\uhr {\+C}$ can be extended to a non-decreasing computable function $\hat{h}\colon [0,1]\to\R$.
\end{theorem}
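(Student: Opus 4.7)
The plan is to define the extension $\hat h$ as the piecewise linear interpolation of $h$ over the gaps of $\+C$, and then verify its computability through approximations based on Lemma~\ref{approx_lemma_unif} together with the uniform continuity of $h$ on the compact $\+C$. First, we may assume without loss of generality that $\{0,1\} \subseteq \+C$: otherwise, adjoin these points and set $h(0) := \lim_{y \to 0^+, y \in \+C} h(y)$ and $h(1) := \lim_{y \to 1^-, y \in \+C} h(y)$, which exist and are computable by the uniform continuity of $h$ on $\+C$ with a computable modulus. The complement $[0,1] \setminus \+C$ then decomposes into a countable family of maximal open intervals $(u_i, v_i)$ with $u_i, v_i \in \+C$, and we set $\hat h(x) = h(x)$ for $x \in \+C$ and
\[
\hat h(x) = h(u_i) + \frac{x - u_i}{v_i - u_i}\bigl(h(v_i) - h(u_i)\bigr) \quad \text{for } x \in (u_i, v_i).
\]
Continuity and monotonicity of $\hat h$ are immediate from the corresponding properties of $h \uh \+C$.

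For the computability of $\hat h$, apply Lemma~\ref{approx_lemma_unif} to obtain upper- and lower-semicomputable auxiliaries
\[
L(x) := \sup\{h(y) : y \in \+C \cap [0,x]\}, \qquad R(x) := \inf\{h(y) : y \in \+C \cap [x,1]\},
\]
which satisfy $L \le \hat h \le R$ with equality $L = R = h$ on $\+C$; on each gap $(u_i, v_i)$ they are constant, equal to $h(u_i)$ and $h(v_i)$ respectively, so that $\hat h = L + \frac{x - u_i}{v_i - u_i}(R - L)$ there. Monotone approximations $L_s(x) \searrow L(x)$ and $R_s(x) \nearrow R(x)$ are therefore computable uniformly in a Cauchy name for $x$.

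Given a Cauchy name for $x$ and $n \in \N$, the approximation to $\hat h(x)$ within $2^{-n}$ is produced by dovetailing two procedures. (a) Wait for a stage $s$ at which $L_s(x) \ge R_s(x)$ and $L_s(x) - R_s(x) < 2^{-n}$; since $L(x) \le L_s(x)$ and $R(x) \ge R_s(x)$, this traps both $L(x)$ and $R(x)$, hence $\hat h(x) \in [L(x), R(x)]$, inside the interval $[R_s(x), L_s(x)]$, so outputting $L_s(x)$ is correct to within $2^{-n}$. This procedure halts whenever $x \in \+C$, where $L = R$ and both approximations converge to the common value. (b) Wait until the Cauchy interval for $x$ is enclosed in some enumerated rational gap $(a_i, b_i)$; then $x$ certainly lies in the maximal gap $(u, v) \supseteq (a_i, b_i)$, with the rational lower bound $v - u \ge b_i - a_i > 0$ on the gap width. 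Compute monotone rational approximations $U_s \searrow u$ and $V_s \nearrow v$ from the enumeration of $[0,1] \setminus \+C$, and output the interpolation estimate obtained by substituting $L_s, R_s, U_s, V_s$ into the interpolation formula. This procedure halts whenever $x$ lies in a gap.

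The main obstacle is quantitative control of the error in~(b), since semicomputable approximations carry no a priori effective convergence rates. This is handled by two ingredients: the rational positive lower bound $b_i - a_i$ on the denominator $v - u$, which bounds the sensitivity of the interpolation to errors in $U_s, V_s$; and the computable modulus of uniform continuity of $h$ on $\+C$, which, once $U_s, V_s$ are certified close to $u, v$ (by observing that the union of enumerated gaps containing $(a_i, b_i)$ extends past a given rational threshold), provides an effective bound on $|L_s(x) - h(u)|$ and $|R_s(x) - h(v)|$. Combined, these yield a halting condition that certifies the output of~(b) is within~$2^{-n}$ of $\hat h(x)$. Since one of (a), (b) always fires in finite time, $\hat h$ is computable.
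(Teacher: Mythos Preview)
Your piecewise-linear interpolation is \emph{not} in general a computable function, so the approach cannot succeed. The gap endpoints $u_i,v_i$ of an effectively closed class are only one-sidedly semicomputable (the left endpoint is right-c.e., the right endpoint is left-c.e.), and hence the values $h(u_i),h(v_i)$ need not be computable reals. A concrete counterexample: let $\Omega$ be Chaitin's constant and take $\+C=\{0\}\cup[\Omega,1]$, which is effectively closed since $(0,\Omega)=\bigcup_s(2^{-s},\Omega_s)$. Let $h(0)=0$ and $h(x)=1$ for $x\in[\Omega,1]$; this is easily seen to be computable on~$\+C$ (with a computable modulus of uniform continuity, even). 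Your $\hat h$ then satisfies $\hat h(x)=x/\Omega$ on the gap $(0,\Omega)$, so $\hat h(0.1)=0.1/\Omega$, which is not a computable real. No amount of approximation machinery can compute this value; your procedure~(b) asserts that one can ``certify $U_s,V_s$ close to $u,v$'', but there is no way to do this when $u,v$ are not computable.

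Procedure~(a) is also broken as stated: from $L_s(x)\searrow L(x)$, $R_s(x)\nearrow R(x)$, and $L_s(x)\ge R_s(x)$ you conclude $[L(x),R(x)]\subseteq[R_s(x),L_s(x)]$, but this is false (take $L=0$, $R=10$, $L_s=5.001$, $R_s=5$). So~(a) may halt with a wrong answer when $x$ lies in a gap.

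The paper's proof avoids these problems by \emph{not} aiming for the linear interpolant. It takes computable two-variable approximants $F(x,t)\searrow L(x)$ and $G(x,t)\nearrow R(x)$, arranged to be monotone in~$x$ and to satisfy $G(x,0)<F(x,0)$, and defines $\hat h(x)$ to be the common value $F(x,t_x)=G(x,t_x)$ at the first crossing time~$t_x$. This value is computable directly from $F$ and $G$ (just search for $t$ with $F$ and $G$ close), and monotonicity in~$x$ follows from monotonicity of $F$ and $G$ in each variable. On the gap the resulting $\hat h$ depends on the particular approximants chosen and is generally \emph{not} the linear interpolation---which, by the example above, it cannot be.
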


\begin{proof}
 For all~$x$, let 
\[
f(x) = \sup_{z \in [0,x] \cap \+C} h(z) \ \ \ \ \text{and} \ \ \ \ g(x) = \inf_{z \in [x,1] \cap \+C} h(z).
\]
Since $h$ is non-decreasing on $\+C$, $f(x) \leq g(x)$ for all~$x$ and both $f$ and~$g$ are non-decreasing. Furthermore, by Lemma~\ref{approx_lemma_unif}, $f$ is upper semicomputable and $g$ is lower semicomputable. Thus by Lemma~\ref{lem:lower-smooth} we can write~$f$ and~$g$ as the limit of computable  functions $F$ and $G$ of two variables where $f(x)=\lim_t F(x,t)$ and $g(x)=\lim_t G(x,t)$ for all~$x$; the function  $F$~is nonincreasing in $t$, and~$G$ is nondecreasing in $t$. We may  further assume that $F$ and $G$ are both nondecreasing in $x$: otherwise,   using Lemma~\ref{approx_lemma} in relativized form, we may  replace $F(x,t)$ and $G(x,t)$ respectively by the computable functions $\widehat F(x,t) = \max_{0 \leq y \leq x} F(y,t)$ and $\widehat G(x,t)= \min_{x \leq y \leq 1} G(y,t)$.  The limit over~$t$ remains  unchanged by this operation: clearly,   $\lim_t \widehat F(x,t) \ge \lim_t F(x,t) = f(x)$. For the converse inequality, let  $q> f(x)$. Then, since $F$ is continuous and $f$ is nondecreasing,  the sets $\{ y \in [0, x]\colon \, F(y,n) < q \}$  ($n \in \NN$) form an open covering  of  $[0,x]$. Thus, there is $n$ such that $\widehat F(x, t) \le q$ for each $t \ge n$. 

%

We may also assume that $G(x,0)< F(x,0)$ for all~$x$,   because  $h$ is bounded and   hence  the values  at time $t=0$ can be chosen sufficiently large for $F$, and sufficiently small for $G$. Now define a  total function on $[0,1]$ by 
\[
\hat{h}(x) = F(x,t_x) = G(x,t_x),
\]
where $t_x$ is the smallest $t \in [0, \infty]$ for which $F(x,t) = G(x,t)$. Such a $t_x$ always exists as $f(x)\leq g(x)$ and $F$ and $G$ are continuous;  we need to allow  $t_x=\infty$ for the case that  $f(x)=g(x)$. 

We claim that $\hat{h}$ is as required. That is,  $\hat  h$ is computable, extends $h \uh \+C$,  and is non-decreasing. Firstly~$\hat{h}$ is computable since, in order to compute a rational within $2 \varepsilon$ of  $h(x)$, it suffices to find any~$t$ such that $G(x,t) <F(x,t) < G(x,t) +\varepsilon$, which  can be done effectively. Once $t$ is found we return a rational which is within $\varepsilon$ of  $F(x,t)$. 

We easily see that $\hat{h}$ extends~$h$.  When $x \in \+C$, the definition of $f$ and $g$, together with the fact that~$h$ is non-decreasing on~$\+C$, imply that $h(x)=f(x)=g(x)$.  In this case,  $t_x=\infty$, so  $\hat{h}(x)=F(x,\infty)=G(x,\infty)$, and  thus $\hat{h}(x)=f(x)=g(x)$. 

Finally, let us verify  that $\hat{h}$ is non-decreasing. Let $x<y$. There are two cases: 

\n (a) $t_x \leq t_y$. In this case
\[
\hat{h}(x)=G(x,t_x) \leq G(x,t_y) \leq G(y,t_y)=\hat{h}(y),
\]
using respectively the fact that $G$ is nondecreasing in its second argument and nonincreasing in its first one.

\n  (b)  $t_y \leq t_x$. In this case
\[
\hat{h}(x)=F(x,t_x) \leq F(x,t_y) \leq F(y,t_y)=\hat{h}(y),
\]
using respectively the fact that $F$ is nonincreasing in its second argument and nondecreasing in its first one. 
\end{proof}

\section{Effective forms of the Denjoy-Young-Saks theorem}
\label{sec:denjoy}

\subsection*{Derivatives, pseudo-derivatives, and the Denjoy alternative}

We start by defining the various (pseudo)-derivatives that we need for our work on the Denjoy-Young-Saks Theorem. For a function~$f\colon\subseteq\R\to\R$, the \emph{slope} at a pair $a,b$ of distinct reals in its domain is
\[
S_f(a,b) = \frac{f(a)-f(b)}{a-b}.
\]
If $z$ is in an open neighborhood of the domain of~$f$, the \emph{upper} and \emph{lower derivatives} \label{def_upper_lower_deriv} of $f$ at $z$ are
\[
\ol D f(z)  =  \limsup_{h\ria 0} S_f(z, z+h) \quad  \textnormal{and}    \quad
\underline D f(z)  =  \liminf_{h\ria 0} S_f(z, z+h),
\]
where as usual, $h$ ranges over positive and negative values. The derivative $f'(z)$ exists if and only if these values are equal and finite.

If $f$ is a Markov computable function, then $\ol D f(z)$ and $\underline D f(z)$ are not defined because the domain of $f$ only contains computable reals. Nonetheless, if $\dom(f)$ is dense, one can consider the upper and lower \emph{pseudo}-derivatives \label{def_pseudo_deriv} defined by:  
\begin{align*}
\utilde Df(x) &= \liminf_{h \to 0^+} \, \{S_f(a,b)  \mid a, b \in \dom(f)   \lland  \, a\le x \le b \lland\, 0 <  b-a\le h\}, \\
\widetilde Df(x) &= \limsup_{h \to 0^+} \,  \{S_f(a,b)  \mid a, b \in \dom(f)  \lland  \, a\le x \le b \lland\, 0 <  b-a\le h\}.
\end{align*}
If $f$ is continuous on its (dense) domain, which is the case for computable and for total Markov computable functions, then one can replace $\dom(f)$ by any dense subset of $\dom(f)$ in the definitions of  $\utilde  Df$ and $\widetilde  Df$. For Markov computable functions, for example, one could use~$\Q$ instead of $\R_c$ to define the pseudo-derivatives. It is well known (see e.g.\  \cite[Fact 7.2]{Brattka.Miller.ea:nd})  that for continuous functions with domain $[0,1]$, the lower and upper pseudo-derivatives of $f\uh\Q$ coincide with the usual lower and upper derivatives. 

We are ready for the formal definition of the Denjoy alternative.

\begin{definition} \label{def:DA}
Let~$f\colon\subseteq [0,1] \rightarrow \R$ be a partial function with dense domain, and let $z \in [0,1]$. We say that~$f$ satisfies the \emph{Denjoy alternative} at $z$ if~
\begin{itemize}
\item either the pseudo-derivative of $f$ at $z$ exists (meaning that $\widetilde D f(z) =\utilde D f(z)$),
\item or $\widetilde D f(z) = +\infty$ and $\utilde D f(z) = - \infty$.
\end{itemize}
\end{definition}

Intuitively this means that   either the function behaves well near~$z$ by having a derivative at this point, or it behaves badly in the worst possible way:  the limit superior and the limit inferior are as different as possible. The Denjoy-Young-Saks theorem (see, e.g., Bruckner~\cite{MR507448}) states that the Denjoy alternative holds at almost all points for {\em any} function~$f$.

\subsection{Characterizing computable randomness via the Denjoy alternative for computable functions}
\label{ss:DenjoyComputable_random}
Recall that a Markov computable function~$g$ has domain containing $\R_c$ unless otherwise mentioned.
\begin{definition}[Demuth \cite{Dem88preprint}]\label{df:DenjoyRandom}
A real $z \in [0,1]$ is called \emph{Denjoy random} (or a \emph{Denjoy set}) if $\utilde D g(z) < +\infty$ for every Markov computable function $g$.
\end{definition}

In a preprint by Demuth \cite[p.\ 6]{Dem88preprint}  it is shown that
if  $z \in [0,1]$ is  Denjoy random, then for every \emph{computable} $f\colon [0,1] \to \mathbb R$, the Denjoy alternative holds at~$z$. This material was rediscovered and made accessible by \Kuc{}. See \cite[Def.\ 2]{Kucera.Nies:11} for more background and references on the relevant work of Demuth.

In combination with the results in \cite{Brattka.Miller.ea:nd}, we have the following
pleasing characterization of computable randomness through a  differentiability property of computable functions.

\begin{theorem}\label{thm:DenjoyCR}
The following are equivalent for a real $z \in [0,1]$.\begin{enumerate}
\item $z$ is Denjoy random.
\item $z$ is computably random.
\item For every computable $f \colon [0,1] \to \mathbb R$, the Denjoy alternative holds at~$z$.
\end{enumerate}
\end{theorem}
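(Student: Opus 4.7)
The plan is to prove the three implications cyclically: $(1) \Rightarrow (3) \Rightarrow (2) \Rightarrow (1)$. The first two directions follow quickly from results already cited in the introduction to this subsection, while the third contains the main new work.

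The direction $(1) \Rightarrow (3)$ is exactly Demuth's result from \cite{Dem88preprint} noted just before the theorem. For $(3) \Rightarrow (2)$, take any computable non-decreasing $f\colon [0,1] \to \R$. By hypothesis (3), $f$ satisfies the Denjoy alternative at $z$. Because $f$ is non-decreasing, $\utilde D f(z) \ge 0$, which rules out the second case of the alternative. Hence $f'(z) = \widetilde D f(z) = \utilde D f(z)$ exists, and by the Brattka--Miller--Nies characterization of computable randomness (point (a) in the introduction), $z$ is computably random.

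The main work is in $(2) \Rightarrow (1)$. Suppose for contradiction that $z$ is computably random while some Markov computable $g$ satisfies $\utilde D g(z) = +\infty$. For each pair $N, M \in \N$, define
\[
\+C_{N, M} := \{x \in [0,1] : \forall a, b \in \Q,\ (a < x < b \text{ and } 0 < b - a \le 2^{-M}) \Rightarrow S_g(a, b) \geq N\},
\]
which is effectively closed uniformly in $N$ and $M$, since its complement is a union of rational open intervals $(a,b)$ with $S_g(a,b) < N$ and $b - a \le 2^{-M}$, a set that can be enumerated because $g(a), g(b)$ are computable uniformly in rational $a, b$. Because $\utilde D g(z) = +\infty$ and $z$ is irrational (a computably random real cannot be computable), for every $N$ we can pick $M(N)$ with $z \in \+C_{N, M(N)}$. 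On this class, all slopes $S_g(a,b)$ between rationals at scale $\le 2^{-M(N)}$ are at least $N$, so $g$ is non-decreasing between such rationals. Replacing $g$ on $\Q \cap \+C_{N, M(N)}$ by its running maximum and then invoking the extension theorem (Theorem~\ref{thm:extension}) yields a computable non-decreasing $F_N \colon [0,1] \to \R$ whose lower derivative at $z$ satisfies $\utilde D F_N(z) \ge N$. A suitably weighted sum $F := \sum_N c_N F_N$ with computable $c_N > 0$ (chosen so that the series converges but $\sum_N c_N N = +\infty$) is then itself computable and non-decreasing with $\utilde D F(z) = +\infty$, contradicting the Brattka--Miller--Nies characterization applied to the CR point $z$.

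The main technical obstacle lies in producing a genuinely computable non-decreasing $F_N$ from the Markov computable $g$. Markov computable functions do not in general extend to total computable functions, so the raw restriction $g\uh \+C_{N,M(N)}$ is not automatically a computable function on the closed class, as required by Theorem~\ref{thm:extension}. The slope lower bound enforced by $\+C_{N,M(N)}$ is exactly what is needed to control oscillations of $g$ on the rationals in the class, and thus to define a non-decreasing envelope that is computable there and to which the extension theorem can be applied.
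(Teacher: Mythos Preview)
Your implications $(1)\Rightarrow(3)$ and $(3)\Rightarrow(2)$ match the paper exactly. The work is entirely in $(2)\Rightarrow(1)$, and there your proposal has a genuine gap and takes a route quite different from the paper's.

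The fatal problem is uniformity. Your $M(N)$ is chosen so that $z\in\+C_{N,M(N)}$; this depends on how quickly the slopes of $g$ around $z$ exceed $N$, information about $z$ that is not computably available. Hence the sequence $(F_N)_N$ is not uniformly computable, and the sum $F=\sum_N c_N F_N$ is not a computable function. Without the sum you are left with a single $F_N$ satisfying $\utilde D F_N(z)\ge N$, which is no contradiction (indeed $F_N'(z)$ will exist and be finite since $z$ is computably random). There are two further unresolved issues even for a single $N$. First, you acknowledge but do not establish that the running-maximum envelope of the merely Markov computable $g$ is a \emph{computable} function on $\+C_{N,M(N)}$ in the sense required by Theorem~\ref{thm:extension}. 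Second, Theorem~\ref{thm:extension} gives no control on the slopes of the extension \emph{off} the closed class, so the inequality $\utilde D F_N(z)\ge N$ is not justified without an additional density or non-porosity argument at $z$; compare the proof of Theorem~\ref{prop:cr-nonpor-da}, where precisely such an argument (Claim~\ref{cl:nonporous_does_it}) is needed and where the extra non-porosity hypothesis is available.

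The paper avoids all of this with a direct martingale construction. Writing $S_g(\tau)$ for the slope of $g$ across the basic dyadic interval $\Cyl\tau$, one has the martingale identity $S_g(\tau)=\tfrac12\bigl(S_g(\tau0)+S_g(\tau1)\bigr)$, and $\utilde Dg(z)=+\infty$ forces $S_g(\tau)\to+\infty$ along $z$. The only obstacle is that $S_g$ may take negative values. The paper fixes $\sigma\prec z$ below which all relevant slopes exceed $2$ and then argues by cases: either infinitely often along $z$ some child has $S_g(\tau v)_0\le 1$, in which case a simple doubling martingale (betting against the low side) succeeds on $z$; or eventually both children always satisfy $S_g(\tau v)_0>1$, in which case $S_g$ itself, frozen as soon as a child drops below this threshold, is already a nonnegative computable martingale succeeding on $z$. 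This argument uses only that $g(q)$ is a computable real uniformly in a rational $q$, and requires neither the extension theorem nor any density input.
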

\begin{proof}
(1) $\Rightarrow$ (3) is a result of Demuth \cite{Dem88preprint}.

 \vsps

(3) $\Rightarrow$ (2): Let $f$ be a non-decreasing computable function. Then $f$ satisfies the Denjoy alternative at $z$. Since $\utilde Df(z) \ge 0$, this means that $f'(z)$ exists.
This implies that  $z$ is computably random by Brattka et al.~\cite[Thm.\  4.1]{Brattka.Miller.ea:nd}.

 \vsps

(2) $\Rightarrow$ (1):
Assume that $z$ is not Denjoy random. In other words, there is a Markov computable function $g$ such that $\utilde Dg(z) = + \infty$. Given a binary string $\tau$, we write $S_g(\tau)$ to mean $S_g(a,b)$ where $(a,b) = \Cyl \tau$. Note that $S_g(\tau)$ is a computable real uniformly in $\tau$. Furthermore, the function $\tau \mapsto S_g(\tau)$ satisfies the martingale equality, and succeeds on $z$ in the sense that its values are unbounded (and even converge to $\infty$) along~$z$. However,  this function may have negative values; informally, this is as if we were allowed to ``bet with debt'' because we can increase our capital  at a string $\tau 0$ beyond $2S_g(\tau)$ by incurring a debt, i.e., negative value, at $S_g(\tau 1)$. We can use $S_g$ to build a proper computable martingale that succeeds on $z$.

If $z$ is computably random, it is not a dyadic rational. So there is a string $\sss\preceq z$ such that if $\Cyl \sss = (a,b)$, then $S_g(r,s) > 2$ for all $r,s$ such that $z \in (r,s) \sub (a,b)$.

\vsps

\emph{Case $1$.} There are infinitely many $\tau$ with $\sigma\preceq\tau\prec z$ such that there is a $v\in\{0,1\}$ for which $S_g(\tau v)_0\leq 1$. ($S_g(\tau v)_0$ is the first term in the Cauchy name for the computable real $S_g(\tau v)$, which is at most $1$ away from that real.) For such a $\tau$, we have $S_g(\tau v)\leq 2$. By our choice of $\sigma$, we know that $z$ is not an extension of $\tau v$.

We define a computable martingale $M$ on extensions $\tau\succeq\sss$ that will succeed on (the binary expansion of)~$z$. Let $M(\sigma) = 1$. Suppose now that  $M(\tau)$ has been defined and is positive. If $\tau$ and $v$ are as above, let $M(\tau u) = 2M(\tau)$, where $u=1-v$, and let $M(\rho) = 0$ for all $\rho\succeq\tau v$. If not, let $M(\tau v) = M(\tau)$ for $v\in\{0,1\}$. By assumption, along~$z$, the martingale $M$ doubles its value infinitely often  and never loses value. So it succeeds on~$z$.

\vsps

\emph{Case $2$.} Otherwise, assume without loss of generality that no such $\tau$ and $v$ exists. Again, we build a computable martingale $M$ on extensions $\tau\succeq\sss$. Let $M(\sigma) = S_g(\sigma)$. If $M(\tau)$ has been defined and is positive, check if there is a $v\in\{0,1\}$ for which $S_g(\tau v)_0\leq 1$. If so, it is safe to let $M(\rho) = M(\tau)$ for all $\rho\succeq\tau$ because we know that $\tau\nprec z$. If not, let $M(\tau v) = S_g(\tau v)$ for $v\in\{0,1\}$. Thus $M$ is a computable martingale and $M$ and $S_g$ agree along $z$, so $M$ succeeds on $z$.

\vsps

In both cases, $z$ is not computably random.
\end{proof}

Note that all we need for (2) $\Rightarrow$ (1) is that $f(q)$ is a computable real uniformly in a rational $q \in [0,1] \cap \Q$. Thus, in Definition~\ref{df:DenjoyRandom}  we can replace the Markov computability of $f$ by this   hypothesis, which seemingly leads to a stronger randomness notion.

\subsection{Weak 2-randomness yields the Denjoy alternative for functions computable on the rationals} 
First we review some definitions and facts from the last section of \cite{Brattka.Miller.ea:nd}.    
Let $I_\QQ = [0,1] \cap \QQ$. A function $f \colon \sub [0,1] \to \RR$ is called \emph{computable on $I_\QQ$} if $f(q)$ is defined for each $q \in I_\QQ$, and $f(q)$ is  a computable real  (see Definition~\ref{def:compreal}) uniformly in~$q$.

 For any rational $p$,  let 
	\bc $\utilde C(p) = \{z \colon \,  \fa t >0 \, \ex a,b  [ a \le z \le b \lland 0< b-a \le t \lland   \, S_f(a,b ) < p \}$,  \ec where $t,a,b$ range over rationals.
Since $f$ is computable on $I_\QQ$, the set  
\bc $\{z \colon \,  \ex a,b  \,  [ a \le z \le b \lland 0< b-a \le t \lland   \, S_f(a,b ) < p\}$ \ec
  is a $\SI 1$ set  uniformly in $t$.	Then $\utilde C(p)$ is $\PI 2$ uniformly in $p$. Furthermore, 
\begin{equation} \label{eqn:CDtilde}\utilde Df(z) <p \RA z \in \utilde C(p) \RA \utilde Df(z) \le p.  \end{equation}
	  Analogously we define 
	\bc $\widetilde C(q) = \{z \colon \,  \fa t >0 \, \ex a,b  [ a \le z \le b \lland 0< b-a \le t \lland   \, S_f(a,b ) >  q \}$. \ec
	
Similar observations hold for these sets.

\begin{theorem} \label{thm:DA_w2r}  Let $f \colon \sub [0,1] \to \RR$ be computable on $I_\QQ$. Then $f$ satisfies the Denjoy alternative at every weakly $2$-random real $z$. \end{theorem}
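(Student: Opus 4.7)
The plan is to prove the contrapositive: I will show that if $f$ fails the Denjoy alternative at $z$, then $z$ is covered by a null $\Pi^0_2$ class, and hence is not weakly $2$-random. Suppose therefore that $\utilde Df(z) < \widetilde Df(z)$ and at least one of the two is finite; by the left--right symmetry in Definition~\ref{def:DA} I may treat only the case $\utilde Df(z) > -\infty$.

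I would then choose witnessing rationals $p < q$ with $\utilde Df(z) < p < q < \widetilde Df(z)$ and a rational $N > 0$ with $\utilde Df(z) > -N$. By (\ref{eqn:CDtilde}) and its upper analog, the first choice places $z$ in the $\Pi^0_2$ class $\utilde C(p) \cap \widetilde C(q)$. The bound $\utilde Df(z) > -N$ must be witnessed on some rational scale $t > 0$: every slope $S_f(a,b)$ with rational $a,b$, $a \le z \le b$, $0 < b-a \le t$ is at least $-N$. I will then set
\[
V_{t,N} := \{ y \in [0,1] : \forall\, a,b \in \QQ,\; (a \le y \le b \land 0 < b-a \le t) \Rightarrow S_f(a,b) \ge -N \}.
\]
Because $S_f(a,b)$ is uniformly computable on $\QQ^2$, the strict inequality $S_f(a,b) < -N$ is $\Sigma^0_1$ in $(a,b)$, so the complement of $V_{t,N}$ is a c.e.\ union of rational intervals; thus $V_{t,N}$ is effectively closed outside of a measure-zero set of rational endpoints (which cannot contain a weakly $2$-random point). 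Hence $A_{p,q,N,t} := \utilde C(p) \cap \widetilde C(q) \cap V_{t,N}$ is a $\Pi^0_2$ class containing $z$.

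The closing step is to check that $A_{p,q,N,t}$ is Lebesgue null. Any $y \in A_{p,q,N,t}$ satisfies $\utilde Df(y) \le p < q \le \widetilde Df(y)$ together with $\utilde Df(y) \ge -N$, so the Denjoy alternative fails at $y$ with a finite lower pseudo-derivative. By the Denjoy-Young-Saks theorem applied to the pseudo-derivatives of $f$, the set of such $y$ has measure zero. Thus $z$ lies in a null $\Pi^0_2$ class, contradicting weak $2$-randomness. The symmetric case $\widetilde Df(z) < +\infty$ is handled identically by introducing $W_{s,M} := \{ y : \forall a,b \in \QQ, (a \le y \le b \land 0 < b-a \le s) \Rightarrow S_f(a,b) \le M \}$ in place of $V_{t,N}$.

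The main technical obstacle---and the one I expect to require care in a write-up---is the appeal to the Denjoy-Young-Saks theorem for the \emph{pseudo-}derivatives of a function whose domain is a dense subset of $[0,1]$, rather than the classical statement about ordinary two-sided derivatives of a total function on $[0,1]$. This is the same generalization already used implicitly in Theorem~\ref{thm:DenjoyCR} via Demuth's work, and it is standard (see, e.g.,~\cite{MR507448}). The only other delicate point is justifying that $V_{t,N}$ is effectively closed in the presence of rational boundary points, but this is harmless for our purposes since weakly $2$-random reals are irrational.
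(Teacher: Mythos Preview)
Your overall architecture matches the paper's: both arguments place $z$ in a $\Pi^0_2$ class of the form $\utilde C(p)\cap\widetilde C(q)$ intersected with an effectively closed class witnessing $\utilde Df(z)>-\infty$ (your $V_{t,N}$ is essentially the paper's $E_{n,r,s}$, with a scale parameter in place of the rational endpoints $r,s$), and then argue that this class is null.

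The divergence, and the gap, is in the nullity step. You invoke a ``Denjoy--Young--Saks theorem for pseudo-derivatives'' of a function defined only on $I_\QQ$, and claim it is standard and already implicit in Theorem~\ref{thm:DenjoyCR}. Neither claim holds up. The classical DYS theorem in \cite{MR507448} is for functions defined on an interval; versions for functions on a set $E$ give conclusions only at almost every point \emph{of} $E$, which is vacuous when $E=\QQ$. And the implication $(1)\Rightarrow(3)$ in Theorem~\ref{thm:DenjoyCR} concerns \emph{total} computable $f\colon[0,1]\to\RR$, where pseudo-derivatives coincide with ordinary derivatives, so no pseudo-DYS is invoked there. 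Thus your appeal is to a statement that is not available off the shelf.

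The paper fills exactly this gap, and its argument is the real content of the proof. Inside the $\Pi^0_1$ class $E=E_{n,r,s}$ one has a uniform lower bound on slopes, so $f_*(x)=\sup_{a\le x}f(a)$ is nondecreasing on $(r,s)$; extend it to a nondecreasing $g$ on $[0,1]$, apply Lebesgue's differentiation theorem to get $L(x)=g'(x)$ a.e., and then use a non-porosity argument (Claim~\ref{cl:nonporous_does_it}) to show $\widetilde Df(x)\le L(x)\le\utilde Df(x)$ at every $x\in E$ where $L(x)$ exists and $E$ is non-porous. This is precisely a proof of the special case of pseudo-DYS you need. Your write-up should either reproduce this argument or cite a genuine reference for DYS for pseudo-derivatives of functions on dense subsets; the current black-box citation does not suffice.
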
 

\begin{proof}  We adapt   the {classical proof}   in \cite[p. 371]{Bogachev.vol1:07} to the case of pseudo-derivatives.   We analyze the arithmetical complexity of exception sets in order to conclude that weak $2$-randomness is sufficient for the Denjoy alternative to hold. 

   We let $a,b, p , q$ range over  $I_\QQ$.  Recall Definition~\ref{def:compreal}.  For each 
 $r< s$, $r,s \in I_\QQ$, and for each $n \in \NN$, let  
  \begin{equation} \label{eqn:Enrs} E_{n,r,s} = \{ x \in [r,s] \colon \, \fa a,b [ r \le a \le x \le b \le s \to S_f(a,b)_0 >  -n+1]\}. \end{equation} 
 Note that $E_{n,r,s}$  is a  $\PI 1$ class. For every $n$ we have the implications 
 
 \bc $\utilde Df(z) > -n+2 \to \ex r,s \,  [ z \in E_{n,r,s}]  \to \utilde Df(z) > -n. $ \ec
 
To show the Denjoy alternative of Definition~\ref{def:DA} at   $z$, we may assume that  $\utilde D f(z) > - \infty$ or $\widetilde Df(z) < \infty$. If the second condition holds we replace $f$ by $-f$, so we may assume   the first condition holds. Then   $z \in E_{n,r,s}$ for some    $r,s, n$ as above.  Write $E= E_{n,r,s} $. 


For $p< q$, the class $E \cap  \utilde C(p) \cap  \widetilde C(q)$ is $\PI 2$. By (\ref{eqn:CDtilde}) it suffices to show that each such  class is null. For this, we show that  for a.e.\ $x \in E$, we have $\utilde Df(x) = \widetilde Df(x)$. 
 This remaining part of  the argument is entirely  within classical analysis. Replacing  $f$ by $f(x) + nx$, we may assume that for $x \in E$, we have  \bc $\fa a,b [ r \le a \le x \le b \le s \to S_f(a,b)_0 >  1]$.  \ec
 
 Let $f_*(x) = \sup_{r\leq a \le x} f(a)$. Then $f_*$ is  nondecreasing on $(r,s)$. Let $g$ be an arbitrary  nondecreasing function defined on $[0,1]$ that extends $f_*$. Then by a  classic theorem of Lebesgue, $L(x)  := g'(x)$ exists for a.e.\ $x \in [0,1]$. 
 
%
Recall porosity from Definition~\ref{def:porous at}.
 By the Lebesgue density theorem, the points in $E$  at which $E$ is porous form a null set. 
 
 \begin{claim} \label{cl:nonporous_does_it} For each $x \in E$ such that $L(x)$ is defined and $E$ is not porous at $x$, we have $\widetilde Df(x) \le L(x) \le \utilde Df(x) $. \end{claim} 
 
  Since  $\utilde Df(x) \le \widetilde Df(x)$, this establishes the theorem.
  
 To prove the claim, we show $ \widetilde Df(x) \le L(x)$, the other inequality being symmetric.  Fix $\epsilon >0$.  Choose $\alpha > 0$ such that 
 \begin{equation} \label{eqn:uxv} \fa u,v \in E \, [ (u \le x \le v \lland 0< v - u \le \alpha )  \to S_{f_*}(u,v) \le L(x) (1+\epsilon)]; \end{equation}
 furthermore, since $E$ is not porous at $x$, for each $\beta \le \alpha$, the interval $(x-\beta, x+ \beta)$ contains no open subinterval of length $\epsilon \beta$ that is disjoint from $E$. 
 Now suppose that $a,b \in I_\QQ$, $a < x< b$ and   $\beta = 2(b-a) \le \alpha$. There 
 are $u,v \in E$ such that $0\le a-u \le \epsilon \beta$ and $0 \le v-b \le \epsilon \beta$. Since $u,v \in E$ we have $f_*(u) \le f(a) $ and $f(b) \le f_*(v)$. (Only the former relies on the definition of $E$; the latter is immediate from the definition of $f_*$.) Therefore $v-u \le  b-a + 2 \epsilon \beta  = (b-a) (1 + 4 \epsilon)$. It follows that 
  \[ S_f(a,b) \le \frac {f_*(v)  - f_*(u)}{b-a} \le S_{f_*}(u,v)(1+4\epsilon) \le L(x) (1+4\epsilon)(1+\epsilon). \qedhere\]
  \end{proof}

\subsection{Markov computable functions satisfy the Denjoy alternative at all difference random reals}
\label{subsec:diff-Denjoy}

We now turn to the proof of Theorem~\ref{thm:2}. We derive it from a result of interest on its own, which is formulated  in terms  of non-porosity. Recall again  that Markov computable functions are defined on all of $\R_c$. 
\begin{theorem}  \label{prop:cr-nonpor-da}
Let~$z$ be a computably random real that is also a non-porosity point. Then $z$ is DA-random, i.e., all Markov computable functions satisfy the Denjoy alternative at~$z$.
\end{theorem}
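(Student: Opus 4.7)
I argue by contradiction, adapting the proof of Theorem~\ref{thm:DA_w2r}: its appeal to the classical Lebesgue differentiation theorem will be replaced by the Brattka--Miller--Nies theorem (result~(a) in the introduction), which states that non-decreasing computable functions are differentiable at every computably random real, and the quantifier ``at almost every $x\in E$'' will be replaced by the single hypothesis that $z$ itself is a non-porosity point. So suppose some Markov computable $f$ violates the Denjoy alternative at $z$. Since $z$ is computably random it is Denjoy random by Theorem~\ref{thm:DenjoyCR}, so applying this to both $f$ and $-f$ yields $\utilde Df(z)<+\infty$ and $\widetilde Df(z)>-\infty$. Replacing $f$ by $-f$ if needed, I may assume $\utilde Df(z)>-\infty$; the failure of the Denjoy alternative then just means $\utilde Df(z)<\widetilde Df(z)$ (with $\widetilde Df(z)$ possibly $+\infty$). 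I pick $n\in\N$ and rationals $r<z<s$ so that $z$ belongs to the $\Pi^0_1$ class $E=E_{n,r,s}$ from the proof of Theorem~\ref{thm:DA_w2r}, and shift $f$ by a linear term so that $S_f(a,b)>0$ whenever the rational interval $[a,b]$ meets $E$; in particular $f\uh(\QQ\cap E)$ is nondecreasing.

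The heart of the proof is to produce a \emph{computable} nondecreasing function $\hat g\colon[0,1]\to\R$ that agrees with $f$ at every rational in $E$. Following the scheme of the proof of Theorem~\ref{thm:extension}, I consider the two nondecreasing envelopes
\[
\phi(x)=\sup\{f(q):q\in\QQ\cap E,\ q\le x\},\qquad \psi(x)=\inf\{f(q):q\in\QQ\cap E,\ q\ge x\},
\]
which are respectively upper- and lower-semicomputable by an adaptation of Lemma~\ref{approx_lemma_unif} to the Markov computable setting (using rational evaluation points in place of all of $\R_c$); moreover $\phi=\psi=f$ on $\QQ\cap E$. I then smooth them via Lemma~\ref{lem:lower-smooth} and stitch them together by the ``$F(x,t)=G(x,t)$'' intersection trick from the proof of Theorem~\ref{thm:extension}, obtaining a computable nondecreasing $\hat g\colon[0,1]\to\R$ with $\hat g(q)=f(q)$ for every $q\in\QQ\cap E$. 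By Brattka--Miller--Nies, the computable randomness of $z$ now yields that $\hat g'(z)$ exists; write $L=\hat g'(z)$.

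The final step is to transfer $L$ to a pseudo-derivative of $f$ at $z$ using non-porosity. Given rationals $a\le z\le b$ close to $z$, non-porosity of $E$ at $z$ supplies rationals $u,v\in E$ with $u\le a\le z\le b\le v$ and $|a-u|,|v-b|$ arbitrarily small compared to $b-a$. Since $u,v\in E$ and the shift ensures strictly positive slopes on any rational interval meeting $E$, one has $\hat g(u)=f(u)<f(a)$ and $f(b)<f(v)=\hat g(v)$, whence $S_f(a,b)\le S_{\hat g}(u,v)\cdot(v-u)/(b-a)$. As $a,b\to z$ we also have $u,v\to z$ and $(v-u)/(b-a)\to 1$, so $\widetilde Df(z)\le L$; a symmetric estimate using instead rationals $u',v'\in E$ with $a\le u'\le z\le v'\le b$ (now inside $[a,b]$) yields $\utilde Df(z)\ge L$, contradicting $\utilde Df(z)<\widetilde Df(z)$. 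The main obstacle will be the construction of $\hat g$ in the case $\widetilde Df(z)=+\infty$: then $f$ can be unbounded on $\QQ\cap E$ near $z$ and so can the envelopes $\phi,\psi$, so one must either restrict to a subclass of $E$ on which $f$ is bounded (and check that $z$ still lies in it) or adapt the smoothing and gluing of Theorem~\ref{thm:extension} so as to permit computable nondecreasing extensions with unbounded range.
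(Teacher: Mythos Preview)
Your outline tracks the paper's strategy but misses the one ingredient that distinguishes Markov computable functions from functions merely computable on the rationals: continuity at every computable real. Nowhere in your argument do you invoke this property, so if your proof worked it would establish the Denjoy alternative at $z$ for every function computable on $I_\QQ$---but that is exactly Theorem~\ref{thm:DA_w2r}, whose hypothesis (weak $2$-randomness) is strictly stronger than yours. The paper's key step (Claim~\ref{porous_diff_restate}) uses continuity as follows: if $f_*(x)<f^*(x)$ at some $x\in E$, then any rational $d$ strictly between them lets one decide, for each rational $q$, whether $q<x$ or $q>x$ by comparing $f(q)$ with $d$; hence $x$ is computable, and continuity of the Markov computable $f$ at $x$ forces $f_*(x)=f(x)=f^*(x)$, a contradiction. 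Thus $f_*=f^*$ on \emph{all} of $E$, so $f_*\uhr E$ is outright computable, and Theorem~\ref{thm:extension} produces a computable nondecreasing $g$ with $g=f_*$ at \emph{every} point of $E$, not just the rational ones.

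Without this, your non-porosity step breaks: non-porosity of $E$ at $z$ supplies points $u,v\in E$ near $a,b$, but there is no reason these should be rational---$E$ may contain no rationals whatsoever. Your stitched $\hat g$ is only pinned down on $\QQ\cap E$; at an irrational $u\in E$ you know merely $\phi(u)\le\hat g(u)\le\psi(u)$, and cannot deduce $\hat g(u)\le f(a)$ unless $\phi(u)=\psi(u)$. (Once that equality is established, Claim~\ref{cl:nonporous_does_it} runs verbatim with arbitrary $u,v\in E$.) Two secondary points: defining your envelopes over $\QQ\cap E$ rather than $\QQ\cap[r,s]$ is risky since $\QQ\cap E$ may be empty; the paper takes suprema over all rationals in $[r,x]$. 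And your ``main obstacle'' is a red herring: after the linear shift, for any $x\in E$ and rational $a\in[r,x]$ one has $f(a)<f(s)$ (take $b=s$ in the slope bound defining $E$), so $f_*$ is bounded on $E$ by $f(r)$ and $f(s)$.
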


Theorem~\ref{thm:2} now follows because every difference random real is computably random, and, by Lemma~\ref{lem:porous}, a non-porosity point.

\begin{proof}   Note that each Markov computable function is computable on $I_\QQ$. We will adapt the proof of the foregoing Theorem~\ref{thm:DA_w2r}  to the stronger hypothesis  that  the given function $f$ is   Markov computable, in order to show that the weaker present hypothesis on the real $z$ is now sufficient for the Denjoy alternative.

Given $n\in \NN$ and $r<s$ in $I_\QQ$, define the set $E = E_{n,r,s}$ as above. As before, we may assume that for $x \in E$, we have $\fa a,b \, [ r \le a \le x \le b \le s \to S_f(a,b)_0 >  1]$, and hence the  function  $f_*(x) = \sup_{a \le x} f(a)$ is nondecreasing on $(r,s)$, hence on $E$.

We will invoke Theorem~\ref{thm:extension} in order to show that some total nondecreasing extension $g$ of $h= f_*\uhr E$ can be chosen to be computable.   
%
\begin{claim} \label{porous_diff_restate}  The function $f_*\uhr E$ is computable. \end{claim}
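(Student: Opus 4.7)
The plan is to exhibit an effective procedure that, given a Cauchy name for any $x \in E$ and a desired precision $\varepsilon > 0$, outputs a rational within $\varepsilon$ of $f_*(x)$. The first observation is a sandwich: for any rationals $a, b \in [r,s]$ with $a < x < b$, one has $f(a) \le f_*(x) \le f(b)$. The lower bound is immediate from the supremum definition of $f_*$; for the upper bound, the $E$-condition (after the assumed translation $f \mapsto f + n \cdot \mathrm{id}$) gives $S_f(a',b) > 0$ for every rational $a' \le x$, hence $f(a') < f(b)$, and taking the supremum yields $f_*(x) \le f(b)$. Writing
\[
L := \sup\{ f(a) : a \in \QQ \cap [r,s],\, a < x\}, \qquad R := \inf\{ f(b) : b \in \QQ \cap [r,s],\, b > x\},
\]
we thus have $L \le f_*(x) \le R$.

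The heart of the argument is to show $L = R$; equivalently, that $f$ restricted to the rationals has no jump at $x$. Suppose for contradiction $L < R$ and pick a rational $u \in (L, R)$. Since $f$ is Markov computable and every rational $q$ has a canonical index name, $f(q)$ is a computable real uniformly in $q \in \QQ$, so the set $A_u := \{ q \in \QQ \cap [r,s] : f(q) < u\}$ is c.e. By construction $A_u$ contains every rational $< x$ (since $f(q) \le L < u$) and misses every rational $> x$ (since $f(q) \ge R > u$). If $x \notin \QQ$, then $A_u = \QQ \cap [r,x)$ and $\sup A_u = x$, so $x$ is left-c.e.; the symmetric c.e.\ set $B_u := \{ q \in \QQ \cap [r,s] : f(q) > u\}$ equals $\QQ \cap (x,s]$ and has $\inf B_u = x$, so $x$ is also right-c.e. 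Either way, $x \in \R_c$ (trivially so if $x \in \QQ$).

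By Ce{\u\i}tin's theorem $f$ is continuous at $x \in \R_c$, so $f(q) \to f(x)$ as rational $q \to x$. Taking $b_n \in \QQ$ with $b_n \downarrow x$, the $E$-condition gives $f(a) < f(b_n) \to f(x)$ for every rational $a < x$, so $L \le f(x)$; conversely $L \ge f(x)$ by continuity from the left. Thus $L = f(x)$, and the symmetric argument yields $R = f(x)$, contradicting $L < R$. Hence $L = R$, and combined with the sandwich, $L = R = f_*(x)$.

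The algorithm is now immediate. Given a Cauchy name for $x \in E$ and $\varepsilon > 0$, enumerate rational pairs $(a,b) \in [r,s]^2$ with $a < x < b$ (each inequality is $\Sigma^0_1$ in the Cauchy name), approximate $f(a)$ and $f(b)$ sufficiently closely via Markov computability, and wait until $f(b) - f(a) < \varepsilon$; the equality $L = R$ guarantees that this search terminates. The midpoint $(f(a)+f(b))/2$ then lies within $\varepsilon/2$ of $f_*(x) \in [f(a),f(b)]$. Running this uniformly at precisions $2^{-n}$ produces a Cauchy name for $f_*(x)$, establishing that $f_*\uhr E$ is computable. The main obstacle is the no-jump step: any hypothetical jump of $f$ on the rationals at $x \in E$ defines an effective cut forcing $x$ to be both left- and right-c.e., so $x \in \R_c$, and then Ce{\u\i}tin's continuity at $x$ contradicts the jump.
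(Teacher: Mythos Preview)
Your proof is correct and follows essentially the same approach as the paper: both define the left supremum and right infimum (your $L,R$ are the paper's $f_*(x), f^*(x)$), argue that a gap between them would render $x$ both left- and right-c.e.\ via an effective cut, and then invoke continuity of the Markov computable $f$ at the resulting computable real to rule out the gap. The only difference is cosmetic---you spell out the continuity-implies-no-jump step explicitly (which the paper leaves implicit) and output the midpoint rather than $f(p)_{n+2}$.
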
  
To see this, recall that $p,q$ range over $I_\QQ$, and  let $f^*(x) = \inf_{x\leq q\le s} f(q)$. If $x\in E$ and $f_*(x) < f^*(x)$ then $x$ is computable: fix a rational $d$ in between these two values. Then $p< x \lra f(p) < d$, and $q> x \lra f(q) > d$. Hence $x$ is both lower and upper semicomputable, and therefore computable.
Now a Markov computable function is continuous at every computable $x$. Thus $f_*(x)= f^*(x)$ for each $x$ in $E$.

To compute $f_*(x)$ for $x \in E$ up to precision $\tp{-n}$, we can now simply search for rationals $p<x <q$ such that $0< f(q)_{n+2} - f(p)_{n+2} < \tp{-n-1}$, and output $f(p)_{n+2}$. If during this search we detect that  $x \not \in E$, we stop. This shows the claim.

Now, by Theorem~\ref{thm:extension}, a  total nondecreasing extension $g$ of $h= f_*\uhr E$ can be chosen to be computable.   
By \cite[Thm.\  4.1]{Brattka.Miller.ea:nd} we know that $L(x) : = g'(x)$ exists for each computably random  real $x$.  
Since $z$ is a non-porosity point, invoking Claim~\ref{cl:nonporous_does_it}  concludes the proof of the theorem. 
\end{proof}

\def\sF{\ensuremath{\mathcal{F}}\xspace}
\begin{remark}\label{remark_banach_mazur} Let \sF be the class of continuous  functions $f\colon\subseteq [0,1] \rightarrow \R$ with  domain containing  $\R_c$ such that  $f(q)$ is a computable real uniformly in a rational $q$. It is easy to check that Claim~\ref{porous_diff_restate} already holds for functions in \sF: to verify the claim note that the $f_*(x)=\sup\{f(a)\mid r\leq a\leq x, a \in \Q\}$. Hence the proof that  $f_*$ 
is computable on the relevant interval works under the weaker hypothesis $f\in\sF$.

Recall that a function $f\colon\subseteq [0,1] \rightarrow \R$ with  domain containing  $\R_c$ is called Banach-Mazur computable if it maps every computable sequence of reals to a computable sequence of reals (but not necessarily uniformly). Mazur \cite{Mazur_63_Continu} proved that these functions are continuous (for a more general version in computable metric spaces see Hertling \cite[Theorem 16]{Hertling2000}). Thus all Banach-Mazur computable functions satisfy the Denjoy alternative at difference randoms.

Hertling \cite{hertling_personal} showed that   the Banach-Mazur computable functions form a proper subclass of  
\sF.
\end{remark}

\subsection{The class $\mathsf{DA}$  is incomparable with the Martin-L\"of random reals}

First we give  a real $x$ that is DA-random but not ML-random. By Theorem~\ref{prop:cr-nonpor-da}, it is enough to prove:

\begin{theorem}\label{forcing_thm_proper}
There exists a computably random real~$x$ that is a density-one point and not Martin-L\"of random.
\end{theorem}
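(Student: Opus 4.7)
The plan is a forcing construction. Conditions are pairs $(\sigma, \+P)$ where $\sigma \in 2^{<\omega}$ and $\+P \subseteq [0,1]$ is a computable $\Pi^0_1$ class with $\sigma \in \+P$ (i.e., $\sigma$ has extensions in $\+P$) and the relative density $\lambda_\sigma(\+P)$ close to $1$; extensions lengthen $\sigma$ and shrink $\+P$, preserving, and in fact improving, the thickness ratio. The real $x$ is the limit of the $\sigma$'s along a decreasing sequence of conditions that meets all the requirements listed below.

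Three families of requirements will be addressed. For each index $e$, the requirement $(\mathrm{CR}_e)$ asks that the $e$-th partial computable martingale $d_e$ does not succeed on $x$; this is met by replacing $\+P$ by its intersection with the $\Pi^0_1$ class $\{z : d_e(z \uh n) \leq k \text{ for all } n\}$ for a sufficiently large constant $k$, which by Ville's inequality retains at least a $(1 - 1/k)$-fraction of the measure. The requirement $(\mathrm{D}_e)$ asks that if $x \in \+Q_e$ (the $e$-th $\Pi^0_1$ class) then $\varrho(\+Q_e | x) = 1$; this is handled by case analysis on $(\sigma, \+P)$: if there is a point of $\+P \cap [\sigma]$ outside $\+Q_e$, then, because $\+Q_e$ is closed, we may extend $\sigma$ into a sub-cylinder of $\+P \setminus \+Q_e$, forcing $x \notin \+Q_e$; otherwise $\+P \cap [\sigma] \subseteq \+Q_e$, and then any $x \in \+P$ with $\varrho(\+P | x) = 1$ automatically satisfies $\varrho(\+Q_e | x) = 1$. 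The requirement $(\mathrm{ML}_e)$ asks that $x \in U_e$ for a pre-specified universal Martin-L\"of test $(U_e)$; I will choose this test so that every short cylinder $[\sigma]$ satisfies $\lambda_\sigma(U_e) > 0$, for example by defining $U_e$ to contain, within each cylinder of length less than some prescribed $\ell_e$, a sub-cylinder obtained by appending a fixed $0$-block at a prescribed position, keeping $\lambda(U_e) \leq 2^{-e}$. As long as $\lambda_\sigma(\+P) > 1 - \lambda_\sigma(U_e)$, we can then find $\sigma' \succ \sigma$ with $[\sigma'] \subseteq U_e$ and $\sigma' \in \+P$.

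The main obstacle will be the bookkeeping across infinitely many stages: each restriction of $\+P$ consumes some of its thickness in $[\sigma]$, so we need the total thickness loss to be bounded. I would address this by pre-allocating a convergent budget (say, at most $2^{-e}$ thickness may be spent on the $e$-th requirement) and, crucially, by exploiting Lebesgue density of $\+P$ after each restriction: extending $\sigma$ to a prefix of a density-$1$ point of the updated $\+P$ restores $\lambda_\sigma(\+P)$ arbitrarily close to $1$ before the next requirement is tackled. Combined with a standard priority-style interleaving of the three families, this yields a real $x$ that is simultaneously computably random, a density-one point, and captured by every component of $(U_e)$, hence not Martin-L\"of random.
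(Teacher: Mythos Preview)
Your approach has a genuine gap in the handling of the density-one requirements $(\mathrm{D}_e)$. In Case~2, when $\+P\cap[\sigma]\subseteq\+Q_e$, you assert that ``any $x\in\+P$ with $\varrho(\+P\mid x)=1$ automatically satisfies $\varrho(\+Q_e\mid x)=1$''. That implication is correct, but your construction never establishes the hypothesis $\varrho(\+P\mid x)=1$ for the condition classes $\+P$ themselves. The ``restore thickness'' step only arranges that $\lambda_{\sigma_m}(\+P_m)$ is close to~$1$ at the particular stage-strings $\sigma_m$; it says nothing about cylinders $[\tau]$ with $\sigma_m\prec\tau\prec\sigma_{m+1}$ (of which there may be arbitrarily many, since the $(\mathrm{ML}_e)$ and Case~1 steps can extend $\sigma$ by many bits at once), and nothing at all about non-dyadic intervals. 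Since each $\+P_m$ is itself some $\+Q_{e'}$, you might hope that handling $(\mathrm{D}_{e'})$ later will take care of it---but at that later stage the current condition class $\+P'$ is contained in $\+P_m$, so you land in Case~2 again and defer to $\varrho(\+P'\mid x)=1$. This is an infinite regress.

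The paper's forcing avoids this by taking conditions $\langle\sigma,M,q\rangle$ with $M$ a computable martingale and $M(\sigma)<q$; the associated closed set is $\{z\succ\sigma:M(\tau)<q\text{ for all }\sigma\preceq\tau\prec z\}$. The point is that martingale conditions carry exactly the structure needed to force density at \emph{all} scales with a single extension. In the analogue of your Case~2, one observes that $[\sigma]\smallsetminus\+C\subseteq\+D:=\{z:M(\tau)\geq q\text{ for some }\tau\prec z\}$, finds $\tau\succeq\sigma$ with $M(\tau)$ within $\eps(q-d)$ of the infimum $d=\inf_{\sigma'\succeq\sigma}M(\sigma')$, and passes to $\langle\tau,M,r\rangle$ for some $r$ just above $M(\tau)$. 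Since $M\geq d$ everywhere and $M<r$ along $x$, Kolmogorov's inequality (packaged as differentiability at $x$ of the nondecreasing function with slopes $M$, which holds because $x$ is computably random) bounds $\lambda_I(\+D)$ by roughly $(r-d)/(q-d)\leq\eps$ for \emph{every} small interval $I\ni x$. Your generic $\Pi^0_1$ conditions have no analogous mechanism for shrinking the density of the complement uniformly across all intervals, which is why the argument stalls.
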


\begin{corollary}
There exists a real~$x$ that is not Martin-L\"of random and all Markov computable functions satisfy the Denjoy alternative at $x$.
\end{corollary}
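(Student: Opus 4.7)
The plan is a Cantor-style forcing construction: build $x$ as the unique point of a nested sequence $F_0\supseteq F_1\supseteq\cdots$ of non-empty effectively closed subsets of $[0,1]$ of positive measure, whose diameters tend to zero. Fix a nested universal Martin-L\"of test $(U_n)$ with $\lambda(U_n)\le 2^{-n}$, a uniform effective enumeration $(d_e)$ of all computable martingales, and a uniform enumeration $(\+D_e)$ of all effectively closed subclasses of $[0,1]$. The requirements to be met at suitable stages are:
\begin{itemize}
\item[($L_n$)] $F_s\subseteq U_n$, ensuring $x\in\bigcap_n U_n$ and hence that $x$ is not ML-random;
\item[($M_e$)] $d_e$ is bounded on $F_s$ by some constant $B_e$, preventing $d_e$ from succeeding on $x$;
\item[($D_{e,k}$)] either $F_s\cap\+D_e=\emptyset$, or $F_s\subseteq \+D_e^{(1-2^{-k},\delta)}$ for some rational $\delta>0$, where
\[
\+D_e^{(\alpha,\delta)} := \bigl\{\, y\in\+D_e \;\big|\; \lambda_I(\+D_e)\geq\alpha \text{ for every open interval } I\ni y \text{ with } |I|<\delta\,\bigr\}.
\]
\end{itemize}
Iterating ($D_{e,k}$) over all $k$ forces $\varrho(\+D_e\mid x)=1$ whenever $x\in\+D_e$, giving the density-one property.

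For the shrinking step at each stage, ($L_n$) is handled by intersecting with $U_n$, which retains positive measure if we process ($L_n$) promptly while $F_s$ is still ``large inside $U_{n-1}$''. ($M_e$) is handled by Kolmogorov's martingale inequality: $\lambda\{y \mid \sup_k d_e(y\uh k) > M\}\leq d_e(\emptystring)/M$, so for $M$ large enough the effectively closed set on which $d_e$ is bounded by $M$ has measure exceeding $\lambda(F_s)(1-\tfrac12)$, and intersecting with $F_s$ keeps positive measure. For ($D_{e,k}$), I first check that $\+D_e^{(\alpha,\delta)}$ is effectively closed uniformly in $e$ and rational $\alpha,\delta$ (the condition ``$\lambda_I(\+D_e)<\alpha$'' is $\Sigma^0_1$ in $I$ by upper semicomputability of the measure of a $\Pi^0_1$ class, and we may quantify over rational intervals). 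By Lebesgue's density theorem $\bigcup_{\delta>0}\+D_e^{(1-2^{-k},\delta)}$ has full measure in $\+D_e$, so if $\lambda(F_s\cap\+D_e)>\lambda(F_s)/2$ then some rational $\delta$ witnesses $\lambda(F_s\cap \+D_e^{(1-2^{-k},\delta)})>0$ and we take $F_{s+1}$ to be an effectively closed subset of this; otherwise $\lambda(F_s\setminus\+D_e)\geq \lambda(F_s)/2$ and we restrict to an effectively closed subset of that.

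The main obstacle is coordinating these three kinds of shrinking while keeping the measure positive at every stage, and enforcing consistency of the ($D_{e,k}$) requirements as $k$ varies for fixed $e$. Consistency is automatic: once we commit to $F_s\subseteq \+D_e^{(1-2^{-k_0},\delta_0)}\subseteq\+D_e$, every subsequent ($D_{e,k}$) for $k>k_0$ is merely a refinement within $\+D_e$, which is possible because Lebesgue density guarantees positive measure inside the finer density strata. A standard bookkeeping arrangement interleaves the three requirement types, and each individual shrinking preserves positivity of measure; intersecting at infinitely many stages with shrinking basic intervals forces the diameters of $F_s$ to zero, so that $\bigcap_s F_s=\{x\}$. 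The resulting real $x$ lies in every $U_n$, escapes every computable martingale, and is a density-one point, as required.
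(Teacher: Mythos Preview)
Your plan has a real conflict between the $(L_n)$ requirements and the $(D_{e,k})$ requirements that ``standard bookkeeping'' does not resolve. Consider $\+D_e=[0,1]\setminus U_m$ for some $m$ much larger than any $n$ with $L_n$ already processed. This $\+D_e$ is a $\Pi^0_1$ class of measure $\ge 1-2^{-m}$, so when your $D_{e,k}$ strategy meets it you have $\lambda(F_s\cap\+D_e)\ge\lambda(F_s)-2^{-m}>\lambda(F_s)/2$, and your rule forces $F_{s+1}\subseteq\+D_e$. From that point on $x\notin U_m$, hence $x$ is ML-random, and $L_m$ can never be met. ``Processing $(L_n)$ promptly'' cannot help: at every stage only finitely many $L_n$ have been handled, while infinitely many classes $[0,1]\setminus U_m$ wait in the $D$-list, and more generally your measure-threshold heuristic will always elect to enter any $\Pi^0_1$ class of nearly full measure---exactly the complements of ML-test levels. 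So the $D$-strategy systematically pushes $x$ toward ML-randomness, against $(L_n)$. (There is also a smaller gap: ``intersecting with $U_n$'' does not yield an effectively closed set, and you give no reason why $\lambda(F_s\cap U_n)>0$ at the moment you need it.)

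The paper avoids this tension by a genuinely different mechanism. Conditions carry a computable martingale $M$ and bound $q$; non-ML-randomness is obtained not by intersecting with the $U_n$ but by extending the current string along a computable path of $M$-value below $q$ until the prefix has low $K$-complexity (Claim~4), which imposes no lasting measure-theoretic restriction. The density-one argument (Claim~5) then ties each $\Pi^0_1$ class $\+C$ to the martingale already in the condition: either some $\tau\succeq\sigma$ with $M(\tau)<q$ escapes $\+C$, or every string below $\sigma$ that leaves $\+C$ has $M\ge q$, so the complement of $\+C$ lies where $M$ is large; one then extends to a string where $M$ is near its infimum and invokes the Brattka--Miller--Nies differentiability theorem for the nondecreasing function built from $M$ to bound the density of $[0,1]\setminus\+C$ at $x$. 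The three requirement families are not independent modules to be interleaved but are all mediated through the martingale, and this coupling is exactly what your nested-closed-sets framework is missing.
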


\begin{proof}[Proof of Theorem \ref{forcing_thm_proper}]
We present the construction in the language of forcing. The forcing partial order $\PP$ is inspired by the well-known construction due to Schnorr \cite{Sc:71} of a computably random real $x$ that is not ML-random. Schnorr's idea was to consider more and more computable martingales along the binary expansion of $x$, and ensure that an appropriate linear combination of the finitely many martingales currently considered cannot increase too much. One could say that $\PP$ is  a direct paraphrase  of Schnorr's argument in the language of forcing. Suprisingly, a sufficiently generic filter yields a real that is not ML-random and also a  density-one point. 

Let $\PP$ be the   set of conditions of the form $\uple{\sigma,M,q}$ where $M$ is a computable  martingale, $\sigma$ a string and $q$ a rational, and such that $M(\sigma)<q$.  We say that $\uple{M',\sigma',q'}$ \emph{extends} $\uple{\sigma,M,q}$, which we write $\uple{\sigma',M',q'} \leq \uple{\sigma,M,q}$, if
\begin{itemize}
\item $q' \leq q$,
\item $\sigma \preceq \sigma'$ and $M(\tau) < q$ for all~$\sigma \preceq \tau \preceq \sigma'$,
\item For all $\tau \extend \sigma'$, $M'(\tau) < q' \Rightarrow M(\tau)< q$.
\end{itemize}
To each condition $\uple{\sigma,M,q}$ we associate the effectively closed set (of positive measure)
\[
L_\uple{\sigma,M,q} = \{x \in \cs \mid x \extend \sigma \wedge (\forall\tau)~ \sigma \preceq \tau \prec x \rightarrow M(\tau) < q\}.
\]
(Notice that $\uple{\sigma',M',q'} \leq \uple{\sigma,M,q}$ implies $L_\uple{\sigma',M',q'} \subseteq L_\uple{\sigma,M,q}$.)\\

\noindent We claim that for a sufficiently generic filter $G \subseteq \PP$, the closed set
\[
\bigcap_{\uple{\sigma,M,q} \in G} L_\uple{\sigma,M,q}
\]
is a singleton~$\{x\}$ and~$x$ is not Martin-L\"of random but satisfies the Denjoy alternative for every Markov computable function. We verify this fact via a succession of claims.

\vsp

\noindent {\em Claim $1$.} For any filter $G \subseteq \PP$, the  set $\G = \bigcap_{\uple{\sigma,M,q} \in G} L_\uple{\sigma,M,q}$ is non-empty. If~$G$ is sufficiently generic, $\G$ is in fact a singleton~$x$ which is equal to the union of the strings appearing in the conditions of~$G$. \\
\noindent {\em Subproof.}
By compactness, if~$\G$ is empty, then there are finitely many conditions $\uple{\sigma_i,M_i,q_i}$ such that $\bigcap_{i} L_\uple{\sigma_i,M_i,q_i}$ is empty. Since $G$ is a filter, let $\uple{\sigma^*,M^*,q^*}$ be a condition in~$G$ extending all the $\uple{\sigma_i,M_i,q_i}$. We have $\bigcap_{i} L_\uple{\sigma_i,M_i,q_i} \supseteq L_ \uple{\sigma^*,M^*,q^*}$, and the latter is non-empty as it has positive measure.

Now, for all~$n$, let~$D_n$ to be the set of conditions $\uple{\sigma,M,q}$ with $|\sigma| \geq n$. One can see that $D_n$ is dense: indeed, for a condition $\uple{\sigma,M,q}$, if $|\sigma| < n$, then diagonalizing against~$M$ during~$n -|\sigma|$ steps, one can find an extension~$\tau$ of $\sigma$ of length at least~$n$ such that $\uple{\tau,M,q}$ extends~$\uple{\sigma,M,q}$. Therefore if~$G$ is sufficiently generic, it contains conditions $\uple{\sigma,M,q}$ for arbitrarily long~$\sigma$. Since~$G$ is a filter, all the strings appearing in its elements must be comparable, hence there is a unique real~$x$ that extends them all. Therefore~$\G$ contains at most the singleton~$\{x\}$. Since $\G$ is non-empty, it is equal to the singleton~$\{x\}$.
\hfill $\Diamond$

\vsp

From now on we assume that $G$ is generic enough to ensure that $\G$ is a singleton, which will be called~$\{x\}$.

\vsp

\noindent {\em Claim $2$.}
If~$G$ is sufficiently generic, and $\uple{\sigma,M,q}$ is a condition in~$G$, then ${M(x \uh n) < q}$ for all~$n \geq |\sigma|$.

\noindent {\em Subproof.}
Trivial by definition of~$x$. \hfill $\Diamond$

\vsp

\noindent {\em Claim $3$.}
If~$G$ is sufficiently generic, then $x$ is computably random.

\noindent {\em Subproof.}
This is the usual argument. Let~$N$ be a computable martingale which we can assume to have initial capital~$1$. Let $\uple{\sigma,M,q}$ be a condition. Let $\delta$ be a positive rational such that $M(\sigma)< q - \delta$. Then, setting $M'= M+2^{-|\sigma|}\delta N$, it is easy to see that $\uple{\sigma,M',q}$ extends~$\uple{\sigma,M,q}$, and that $N$ does not succeed on any element of $L_\uple{\sigma,M',q}$.
\hfill $\Diamond$

\vsp

\noindent {\em Claim $4$.}
If~$G$ is sufficiently generic, then $x$ is not Martin-L\"of random.

\noindent {\em Subproof.}
This part is also quite standard. Let $\uple{\sigma,M,q}$ be a condition and let~$c$ be a constant. Since one can computably diagonalize against a computable martingale, there exists a computable sequence~$z$ extending~$\sigma$ such that $M(z \uh n) < q$ for all~$n > |\sigma|$. Since $z$ is computable, it is possible to take $n$ large enough so that $\tau =z \uh n$ satisfies $K(\tau) < |\tau| - c$. This proves that a sufficiently generic~$G$ will yield a sequence~$x$ that is not Martin-L\"of random.
\hfill $\Diamond$

\vsp

\noindent {\em Claim $5$.}
If~$G$ is sufficiently generic, then $x$ is a point of density~$1$ of every $\Pi^0_1$ class $\+C$ of positive measure to which it belongs.

\noindent {\em Subproof.}
Fix $\eps >0$. We want to extend any given condition $\uple{\sigma,M,q}$ to a new condition that forces either $x \notin\+C$ or $\varrho(\sC|x)\geq 1-\eps$ (assuming that $x$ is computably random, which we showed in Claim 3 to be ensured by sufficient genericity). We may assume without loss of generality that as soon as the martingale $M$ reaches a capital greater than~$q+1$, it stops betting; formally: if  $M(\tau)_0> q+1$,  then $M(\rho) = M(\tau) $ for each $\rho \succeq \tau$. (Here $M(\tau)_0$ denotes the first member of the Cauchy name that $M$ computes from $\tau$. Note that $M(\tau)_0> q+1$ implies $M(\tau)> q$.) Indeed, we can transform $M$ into a computable martingale $M'$ with this additional property, which clearly ensures $\uple{\sigma,M',q} \leq \uple{\sigma,M,q}$.


 The advantage of this assumption is that whenever we find a string $\tau \extend \sigma$ such that $M(\tau) < q$, then we immediately know that $\uple{\tau,M,q}$ is an extension of $\uple{\sigma,M,q}$.

Now, if there is a $\tau\succeq\sigma$ such that $\Cyl{\tau}\cap\+C=\emptyset$ and $M(\tau)<q$, then $\uple{\tau,M,q}$ is a valid extension of  $\uple{\sigma,M,q}$ that forces $x\notin\+C$. Assume that no such $\tau$ exists, that is, for each $\tau \succeq \sigma$ with $\Cyl \tau \cap \+ C = \emptyset$ we have $M(\tau) \ge q$.  Note that $\Cyl{\sigma}\smallsetminus\+C$ can be expressed as $\bigcup_{i\in\omega} \Cyl{\tau_i}$ for an appropriate collection of strings $\{\tau_i\}_{i\in\omega}$ extending~$\sigma$ (in this section, we view cylinders $\Cyl{\tau}$ as \emph{closed} subintervals of $[0,1]$). For each~$i$, we have $M(\tau_i)\geq q$. So $\Cyl{\sigma}\smallsetminus\+C$ is a subclass of $\+D = \Cyl{\{\tau\succeq\sigma\mid M(\tau)\geq q\}}$. Since  $x$  is computably random, $x$ is not an endpoint of $\Cyl{\sigma}$. Hence   we have $\varrho(\sC|x)\geq \varrho(\Cyl{\sigma}\smallsetminus\+D|x)$. Our goal is to force the latter to be at least $1-\eps$.

Let $d= \inf_{\sigma'\succeq\sigma} M(\sigma')$. Note that $d < q$ and choose $r,s \in\Q$ and a $\tau\succeq\sigma$ such that
\begin{itemize}
	\item $d \leq M(\tau)<r<s$, and
	\item $s-d\leq \eps (q-d)$.
\end{itemize}
This can be done by taking $\tau \extend \sigma$ to be such that  $M(\tau)$ is very close to $d$. Informally, what we are doing here is identifying the ``savings" of $M$ at $\sigma$, which is precisely $\inf_{\sigma'\succeq\sigma} M(\sigma')$: this is money that $M$ cannot lose, but it cannot use it for further betting either. Then we pick an extension $\tau$ such that $M(\tau)$ is close to $d$, meaning that $M(\tau)$ is---apart from its savings---almost broke at $\tau$. 

Consider the condition $\uple{\tau,M,r}$, which is an extension of $\uple{\sigma,M,q}$. We claim that if $G$ contains this condition and $x$ is computably random, then $\varrho([0,1]\smallsetminus\+D|x)\geq 1-\eps$. Proving this would be easier if we were only concerned with the dyadic intervals containing $x$, and we would not need $x$ to be computably random for that case. To handle arbitrary intervals, we take a detour through the result of Brattka, Miller and Nies \cite[Thm.\ 4.1]{Brattka.Miller.ea:nd} mentioned in the introduction: since $x\in[0,1]$ is computably random, every computable non-decreasing function is differentiable at $x$.

For $\rho\in 2^{<\omega}$, recall that $0.\rho$ and $0.\rho+2^{-|\rho|}$ are the left and right endpoints of $\Cyl{\rho}\subseteq[0,1]$, respectively. Define a real-valued  function $f$ on the dyadic rationals in $\Cyl{\tau} $ by  $f(0.\tau)=0$ and, for all $\rho\succeq\tau$,
\[
 S_f (0.\rho, 0.\rho + \tp{-|\rho|})= M(\rho).
\]
Since $M$ is bounded, $f$ is Lipschitz on its domain. Since $f(p)$ is a computable real uniformly in a dyadic rational $p$, it is clear that $f$ can be extended to a computable function on $\Cyl \tau$ (also denoted $f$). Note that $f$ is  non-decreasing. 


We are assuming that $x$ is computably random, so $f$ is differentiable at $x$. The fact that $\uple{\tau,M,r}\in G$ implies that if $\tau\preceq\rho\preceq x$, then $S_f (0.\rho, 0.\rho + \tp{-|\rho|}) = M(\rho) < r$, so $f'(x)\leq r<s$.

Now consider an open interval $(a,b)$ containing $x$. We know that $x$ is not an endpoint of $\Cyl{\tau}$, so if $b-a$ is sufficiently small, then $(a,b)\subseteq\Cyl{\tau}$ and $S_f(a,b)<s$. We claim that, for such an interval, $\lambda_{\+D}(a,b)\leq \eps$. Assume otherwise. Let $\{\tau_i\}_{i\leq m}$ be a finite prefix-free collection of strings such that $\Cyl{\tau_i}\subseteq (a,b)$ and $M(\tau_i)\geq q$, for all $i\leq m$, and such that for  $Q=\bigcup_{i\leq m} \Cyl{\tau_i}$, we have    $\lambda (Q)>(b-a)\eps$. This implies that
\[
(b-a)s > f(b)-f(a)\geq \lambda(Q)q + ((b-a)-\lambda(Q))d.
\]
Hence $(b-a)(s-d) > \lambda(Q)(q-d) > (b-a)\eps(q-d)$, which contradicts our choice of $s$ and $d$. Therefore, $\lambda_{\+D}(a,b)\leq \eps$ for any sufficiently small interval $(a,b)$ containing $x$. This means that $\rho(\+C |x )\geq \rho(\Cyl{\sigma}\smallsetminus\+D|x)\geq 1-\eps$, as required.
 \hfill $\Diamond$

\vsp

\noindent This completes the proof.
\end{proof}

Next, we show that some  \ML\ random real  is not DA-random. This result is due to Demuth \cite{Demuth:76}; see \cite[Cor.\ 10]{Kucera.Nies:11} for some background. Since Demuth's notation and proofs are very hard to access, it is worth providing a construction in modern language. It also shows that the counterexample can be lower semicomputable.
\begin{theorem} \label{thm:Markov}
There exists a Markov computable function~$f$ for which the Denjoy alternative does not hold at  a lower semicomputable ML-random real. Moreover,~$f$ can be taken to be uniformly continuous, i.e., it can be built in such a way that it has a (unique) continuous extension to $[0,1]$.
\end{theorem}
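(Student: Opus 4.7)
Plan. The goal is to construct a Markov computable, uniformly continuous function $f$ together with a lower semicomputable Martin-L\"of random real $z$ at which the Denjoy alternative fails. Following Demuth, we take $z = \Omega$ (Chaitin's halting probability), with its standard effective lower approximation $\Omega_s \nearrow \Omega$, and build a nondecreasing $f$ with
\[
\utilde D f(\Omega) = 0 < \widetilde D f(\Omega).
\]
Since $f$ is nondecreasing, we automatically have $\utilde D f(\Omega) \geq 0 > -\infty$, so the ``both infinite'' branch of Definition~\ref{def:DA} fails; and since the pseudo-derivatives differ, no pseudo-derivative exists, so the ``derivative exists'' branch fails as well.

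We build $f$ as a uniformly convergent sum $f = \sum_s g_s$ of nondecreasing, nonnegative, piecewise-linear computable bumps $g_s$, each supported in a short interval near $\Omega_s$ and of amplitude $\|g_s\|_\infty = h_s$ with $\sum_s h_s < \infty$. This guarantees uniform continuity of $f$ on $[0,1]$, and hence a continuous extension $\bar f$. Markov computability of $f$ follows because each $g_s$ is uniformly computable in $s$ and the tail is effectively bounded: given a Cauchy name for a computable real $x$ and precision $2^{-n}$, one truncates the series at a stage $N$ with $\sum_{s > N} h_s < 2^{-n-1}$ and sums $g_s(x)$ for $s \leq N$, each computed to precision $2^{-n-1}/N$.

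The slope witnesses at $\Omega$ come from pairs $(\Omega_s, q_s)$ with $q_s := \Omega_s + 2^{-s}$. Since $\Omega - \Omega_s \to 0$ while $q_s - \Omega_s = 2^{-s}$, one has $q_s > \Omega$ for all sufficiently large $s$, so the pair straddles $\Omega$ with gap $2^{-s}$. We split the stages into two computable infinite sets $S_{\mathrm{steep}}$ and $S_{\mathrm{flat}}$, and design each bump so that for $s \in S_{\mathrm{steep}}$ we have $f(q_s) - f(\Omega_s) \geq q_s - \Omega_s$ (giving $S_f(\Omega_s, q_s) \geq 1$), while for $s \in S_{\mathrm{flat}}$ we have $f(q_s) - f(\Omega_s) \leq 2^{-s}(q_s - \Omega_s)$ (giving $S_f(\Omega_s, q_s) \leq 2^{-s}$). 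Combined with monotonicity, these yield $\widetilde D f(\Omega) \geq 1$ and $\utilde D f(\Omega) = 0$.

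The main obstacle is arranging the bumps so that they do not interfere with each other's slope contributions while keeping the amplitudes summable. A bump $g_t$ whose support lies inside $[\Omega_s, q_s]$ contributes up to $h_t$ to $f(q_s) - f(\Omega_s)$, so for each $s \in S_{\mathrm{flat}}$ one must ensure $\sum_{t:\ \Omega_t \in [\Omega_s, q_s]} h_t$ does not exceed $2^{-s}(q_s - \Omega_s)$. At the same time, for $s \in S_{\mathrm{steep}}$ the bump $g_s$ must be tall enough to force slope $\geq 1$ even though the support must fit into $[\Omega_s, q_s]$. Coordinating this with an arbitrarily slowly converging approximation $\Omega_s \nearrow \Omega$ requires choosing the amplitudes $h_s$ and the precise shapes of the bumps adaptively, for instance by decreasing $h_s$ fast (say $h_s = 2^{-2s}$) on $S_{\mathrm{flat}}$ and using a sparser subsequence for $S_{\mathrm{steep}}$ so that ``steep'' bumps of bounded slope can be inserted without overwhelming the slope budget of nearby ``flat'' witnesses. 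This bookkeeping is the technical core of the construction.
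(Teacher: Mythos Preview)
Your approach has a fatal structural flaw. You build $f$ as a nondecreasing function, and the algorithm you describe for Markov computability (truncate the series using an effective tail bound $\sum_{s>N} h_s < 2^{-n-1}$ and sum finitely many uniformly computable pieces) works for \emph{any} Cauchy name, not just index names. Hence your $f$ is not merely Markov computable: it is a total computable function $[0,1]\to\R$. But by Brattka--Miller--Nies \cite[Thm.~4.1]{Brattka.Miller.ea:nd}, every nondecreasing computable function is differentiable at every computably random real, and $\Omega$ is computably random. So $f'(\Omega)$ exists and the Denjoy alternative \emph{holds} at~$\Omega$, which is the opposite of what you want. Any function witnessing the theorem must fail to be (the restriction of) a total computable function; with a monotone $f$ this is unavoidable, so monotonicity must be abandoned.

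There is also a concrete error in your slope witnesses. You claim $q_s=\Omega_s+2^{-s}>\Omega$ for all large~$s$, arguing from $\Omega-\Omega_s\to 0$. In fact the opposite holds: if $\Omega-\Omega_s<2^{-s}$, then the first~$s$ bits of~$\Omega$ are determined by~$s$, giving $K(\Omega\uh s)\le 2\log s+O(1)$, contradicting $K(\Omega\uh s)\ge s-O(1)$. Hence $\Omega_s+2^{-s}\le\Omega$ for all large~$s$, so your pairs $(\Omega_s,q_s)$ do \emph{not} straddle~$\Omega$.

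The paper's construction avoids both issues. It uses the leftmost point $\alpha$ of the complement of a universal test level $U_1$ and defines $F$ by placing \emph{non-monotone} triangular bumps on selected intervals enumerated into $U_1$; this yields $\widetilde D f(\alpha)=0$ and $\utilde D f(\alpha)=-\infty$. Crucially, Markov computability is obtained not by a tail bound but by waiting until the input real is seen to lie inside one or two enumerated intervals of $U_1$ --- which terminates precisely because every \emph{computable} real lies in $U_1$. This is exactly what prevents the continuous extension~$F$ from being computable on $[0,1]$, so no differentiability theorem for computable functions applies.
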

\begin{proof}

Recall that there is  a universal Martin-L\"of test $(U_n)\sN n$, namely,    the set of  reals in $[0,1]$ that are not in $\MLR$ coincides with $\bigcap_n U_n$. Since no computable real  is Martin-L\"of random, every $x \in \R_c$ belongs to $U_1$. 

Let $\alpha$ be the leftmost point of the complement of $U_1$. Since $U_1$ is an effectively open class, it is an effective union $\bigcup_t I_t$ of \emph{closed} rational intervals $I_t$ that intersect at most  at their endpoints. Let $U_1[s] = \bigcup_{t<s} I_t$ and let $\alpha_s$ be the leftmost point of $[0,1]\smallsetminus U_1[s]$. Then $\alpha$ is approximated from below by the computable sequence of rationals $(\alpha_s)\sN s$.

Our function~$f$ is defined as the restriction to $\R_c$ of the following function $F$. Outside $U_1$, $F$ is equal to $0$. On $U_1$, it is constructed sequentially as follows. At stage~$s+1$, consider $I_s$. There are two cases.
\begin{enumerate}
\item Either adding this interval does not change the value of $\alpha$ (i.e., $\alpha_{s+1}=\alpha_s$). In that case, define the function~$F$ to be equal to zero on $I_s$.
\item Or, this interval does change the value of $\alpha$: $\alpha_{s+1}>\alpha_s$. In this case, define $F$ on $I_s$ to be the triangular function taking value $0$ on the endpoints of $I_s$ and reaching the value $v$ at the middle point, where~$v$ is defined as follows. Let~$t$ be the last stage at which the \emph{previous} increase of $\alpha$ occurred (i.e., $t$ is maximal such that $t<s$ and $\alpha_{t+1} > \alpha_t$). Let $n$ be the smallest integer such that the real interval~$[\alpha_{t},\alpha_{t+1}]$ contains a multiple of $2^{-n}$. For that~$n$, set $v=2^{-n/2}$.
\end{enumerate}

First, we see that the restriction~$f$ of $F$ to $\R_c$ is Markov computable: given a potential  index name~$e$ for a computable real $x$ in the sense of Section~\ref{s:prelims_analysis}, we try to compute the real~$x$ coded by~$e$  until we find a sufficiently good estimate $a < x < b$ such that the interval $[a,b]$ is contained either in one or in the union of two of the intervals appearing in the enumeration of $U_1$. It is then easy to compute~$F$ at~$x$ as one can decide which of the above cases hold for each interval, and both the zero function and the triangular function are computable on $\R_c$. (In the ``triangular'' case, note that the value~$n$ of the construction can be found effectively.)

We claim that the function~$f$ does not satisfy the Denjoy alternative at~$\alpha$. More precisely, we have $\widetilde D f(\alpha) = 0$ and $\utilde D f (\alpha) = - \infty$. Notice that $f$ is equal to $0$ on $(\alpha,1] \cap \R_c$ and non-negative on $[0, \alpha) \cap \R_c$, taking the value~$0$ at computable reals arbitrarily close to $\alpha$ (at least the endpoints of intervals~$I_s$ enumerated on the left of $\alpha$), therefore $\widetilde D f(\alpha) = 0$ is clear. To see that $\utilde D f (\alpha) = - \infty$, given~$k\in \NN$ let $b_k$ be the dyadic real  which is a multiple of $2^{-k}$, is smaller than $\alpha$ and such that $\alpha - a_k < 2^{-k}$. Since $a_k < \alpha$, there exists a stage~$t$ such that $a_k \in [\alpha_t, \alpha_{t+1}]$. Let $s>t$ be the next stage at which $\alpha$ increases. By definition, $F$ is then defined to be a triangular function on $[\alpha_s,\alpha_{s+1}]$ of height $2^{-k/2}$.  
Thus, letting $x_k$ be middle point of $[\alpha_s,\alpha_{s+1}]$ and $q>\alpha$ be a rational such that $q-b_k<2^{-k}$, we have
$$
S_f(x_k,q) = \frac{f(q)-f(x_k)}{q-x_k} \leq \frac{0-2^{-k/2}}{2^{-k}} = - 2^{k/2}.
$$
Since this happens for all~$k$, we have $\utilde D f(\alpha)= -\infty$.

It remains to show that the function~$F$ is continuous on $[0,1]$. But this is almost immediate as one can write $F = \sum_n h_n$, where $h_n$ is the function equal to $0$ except on the intervals on which $F$ is a triangular function of height $2^{-n/2}$, and on that interval $h_n = F$. It is obvious that the $h_n$ are continuous and $||h_n|| \le 2^{-n/2}$. Therefore $\sum_n ||h_n|| < \infty$, so by the Weierstrass M-test we can
conclude that the convergence is uniform and hence the function $\sum_n h_n$ is continuous.
\end{proof}

\bibliographystyle{amsplain}
\providecommand{\bysame}{\leavevmode\hbox to3em{\hrulefill}\thinspace}
\providecommand{\MR}{\relax\ifhmode\unskip\space\fi MR }
\providecommand{\MRhref}[2]{%
  \href{http://www.ams.org/mathscinet-getitem?mr=#1}{#2}
}
\providecommand{\href}[2]{#2}

\end{document}